\newcolumntype{K}[1]{>{\centering\arraybackslash}p{#1}}
\crefname{hypothesis}{Hypothesis}{Hypotheses}
\title{The Hellan--Herrmann--Johnson Method \\ With Curved Elements\thanks{Submitted to the editors DATE.
\funding{This article is based upon work supported by the National Science Foundation under grants DMS-1719694 (Arnold) and DMS-155222 (Walker)
and by the Simons Foundations under grant 601937 (Arnold).}}}
\author{Douglas N. Arnold\thanks{Department of Mathematics, University of Minnesota, Minneapolis (\email{arnold@umn.edu}, \url{http://umn.edu/\~arnold/}).}
\and Shawn W. Walker\thanks{Department of Mathematics, Louisiana State University, Baton Rouge (\email{walker@lsu.edu}, \url{http://www.math.lsu.edu/\~walker/}).}}
\newcommand{\tr}{\operatorname{tr}}        
\newcommand{\diam}{\operatorname{diam}}       
\newcommand{\dd}{:} 
\newcommand{\tp}{^{T}} 
\newcommand{\invtp}{^{-T}} 
\newcommand{\CL}[1]{\overline{#1}} 
\newcommand{\bg}{\bm{g}}
\newcommand{\bn}{\bm{n}}
\newcommand{\bt}{\bm{t}}
\newcommand{\bu}{\bm{u}}
\newcommand{\bB}{\bm{B}}
\newcommand{\bF}{\bm{F}}
\newcommand{\bJ}{\bm{J}}
\newcommand{\bR}{\bm{R}}
\newcommand{\bmeta}{\bm{\eta}}
\newcommand{\bxi}{\bm{\xi}}
\newcommand{\brho}{\bm{\rho}}
\newcommand{\bsigma}{\bm{\sigma}}
\newcommand{\btau}{\bm{\tau}}
\newcommand{\bvarphi}{\bm{\varphi}}
\newcommand{\bGamma}{\bm{\Gamma}}
\newcommand{\bPhi}{\bm{\Phi}}
\newcommand{\bPsi}{\bm{\Psi}}
\newcommand{\ve}{\mathbf{e}}
\newcommand{\vx}{\mathbf{x}}
\newcommand{\vI}{\mathbf{I}}
\newcommand{\cC}{\mathcal{C}}
\newcommand{\cD}{\mathcal{D}}
\newcommand{\cM}{\mathcal{M}}
\newcommand{\cV}{\mathcal{V}}
\newcommand{\cW}{\mathcal{W}}
\newcommand{\Dom}{\Omega} 
\newcommand{\dDom}{\partial \Omega} 
\newcommand{\BdyDom}{\Gamma}
\newcommand{\cornervtx}{\cV_{\BdyDom}} 
\newcommand{\smoothcurve}{\cC_{\BdyDom}} 
\newcommand{\Bclamped}{\BdyDom_{\mathrm{c}}} 
\newcommand{\Bsimsupp}{\BdyDom_{\mathrm{s}}} 
\newcommand{\darc}{d\mathrm{s}}
\newcommand{\darea}{d\mathrm{S}}
\newcommand{\ident}[1]{\mathrm{id}_{#1}}
\newcommand{\tanb}[1]{\ve_{#1}} 
\newcommand{\uu}{\bu} 
\newcommand{\pd}[1]{\partial_{#1}} 
\newcommand{\christ}[2]{\Gamma^{#1}_{#2}} 
\newcommand{\kron}{\delta}
\newcommand{\met}[1]{g_{#1}}
\newcommand{\metmat}{\bg}
\newcommand{\invmet}[1]{g^{#1}}
\newcommand{\invmetmat}{\bg^{-1}}
\newcommand{\tvi}{t}
\newcommand{\btv}{\bt}
\newcommand{\cni}{n}
\newcommand{\bcn}{\bn}
\newcommand{\grad}[1]{\nabla_{#1}}
\newcommand{\hess}[1]{\nabla^2_{#1}}
\newcommand{\Div}[1]{\mathrm{div}_{#1}\,}         
\newcommand{\DivDiv}[1]{\mathrm{div} \, \mathrm{div}_{#1}\,}  
\newcommand{\symmat}{\mathbb{S}}   
\newcommand{\R}{\mathbb{R}}   
\newcommand{\zerobdy}[1]{\mathring{#1}}   
\newcommand{\LAG}{\W}   
\newcommand{\HHJ}{\V}   
\newcommand{\V}{V}   
\newcommand{\W}{W}   
\newcommand{\SW}{\cW}   
\newcommand{\SV}{\cV}   
\newcommand{\Mnn}{\cM_{\mathrm{nn}} } 
\newcommand{\Hdiv}[1]{H(\mathrm{div}, #1)} 
\newcommand{\inner}[3]{\left( #1, #2 \right)_{#3}}  
\newcommand{\duality}[3]{\left\langle #1, #2 \right\rangle_{#3}}  
\newcommand{\afman}[3]{a^{#1} \left( #2, #3 \right)}  
\newcommand{\bfman}[3]{b_{h}^{#1} \left( #2, #3 \right)}  
\newcommand{\afcont}{A_{0}}
\newcommand{\bfcont}{B_{0}}
\newcommand{\afcoer}{\alpha_{0}}
\newcommand{\bfcoer}{\beta_{0}}
\newcommand{\ringbfman}[3]{\mathring{b}_{h}^{#1} \left( #2, #3 \right)}  
\newcommand{\jump}[1]{\left\llbracket #1 \right\rrbracket}
\newcommand{\trinorm}[1]{{\left\vert\kern-0.25ex\left\vert\kern-0.25ex\left\vert #1 \right\vert\kern-0.25ex\right\vert\kern-0.25ex\right\vert}}
\newcommand{\signn}{\sigma^{\mathrm{nn}}}
\newcommand{\hatsignn}{\hat{\sigma}^{\mathrm{nn}}}
\newcommand{\varphinn}{\varphi^{\mathrm{nn}}}
\newcommand{\bvarphiT}{\bvarphi'}
\newcommand{\hatvarphinn}{\hat{\varphi}^{\mathrm{nn}}}
\newcommand{\eye}{\vI}
\newcommand{\meshpoincare}{C_{\mathrm{P}}}
\newcommand{\bendmod}{D}
\newcommand{\youngmod}{E}
\newcommand{\poisson}{\nu}
\newcommand{\shearmod}{\bar{\mu}}
\newcommand{\belasten}{\mathbf{C}}
\newcommand{\invelasten}[4]{K_{#1 #2 #3 #4}}
\newcommand{\binvelasten}{\mathbf{K}}
\newcommand{\hatinvelasten}[4]{\widehat{K}_{#1 #2 #3 #4}}
\newcommand{\TkDom}[2]{\mathcal{T}_{#1}^{#2}}
\newcommand{\TkBdyDom}[2]{\mathcal{T}_{\partial,#1}^{#2}} 
\newcommand{\EkDom}[2]{\mathcal{E}_{#1}^{#2}} 
\newcommand{\EkIntDom}[2]{\mathcal{E}_{0,#1}^{#2}} 
\newcommand{\EkBdyDom}[2]{\mathcal{E}_{\partial,#1}^{#2}} 
\newcommand{\strip}{\Dom_{S}}
\newcommand{\Pk}{\mathcal{P}} 
\newcommand{\Ilag}{\mathcal{I}} 
\newcommand{\LtwoProj}{\mathrm{P}_{h}} 
\newcommand{\CProj}{\mathrm{P}_{0}} 
\newcommand{\IHHJ}{\Pi} 
\newcommand{\MapT}[1]{\bF_{#1}}
\newcommand{\Jac}[1]{\bB_{#1}}
\definecolor{tan}{rgb}{1,0.8,0.7}
\begin{document}

\maketitle

\begin{abstract}
	We study the finite element approximation of the Kirchhoff plate equation on domains with curved boundaries using the Hellan--Herrmann--Johnson (HHJ) method.  We prove optimal convergence on domains with piecewise $C^{k+1}$ boundary for $k \geq 1$ when using a parametric (curved) HHJ space.  
	Computational results are given that demonstrate optimal convergence and how convergence degrades when curved triangles of insufficient polynomial degree are used.  
	Moreover, we show that the lowest order HHJ method on a polygonal approximation of the disk does not succumb to the classic Babu\v{s}ka paradox, highlighting the geometrically non-conforming aspect of the HHJ method.
\end{abstract}

\begin{keywords}
  Kirchhoff plate, simply-supported, parametric finite elements, mesh-dependent norms, geometric consistency error, Babu\v{s}ka paradox
\end{keywords}

\begin{AMS}
  65N30, 35J40, 35Q72
\end{AMS}

\section{Introduction}\label{sec:intro}

The fourth order Kirchhoff plate bending problem presents notorious difficulties for finite element discretization. Among the many approaches that have been proposed, the Hellan--Herrmann--Johnson (HHJ) mixed method is one of the most successful.   In simple situations (polygonal domains and smooth solutions) it provides stable discretization of arbitrary order, and has been analyzed by many authors
\cite{Brezzi_RAIRO1975,Brezzi_RIA1976,Brezzi_Cal1980,Babuska_MC1980,Arnold_M2AN1985,Comodi_MC1989,Blum_CM1990,Stenberg_M2AN1991,Krendl_ETNA2016,Rafetseder_SJNA2018}.  However, in realistic applications the plate domain may well have a curved boundary, and additional errors arise from geometric approximation of the domain.  In this paper, we analyze the effect of this geometric approximation, and show that, if handled correctly, the full discretization converges at the same optimal rate as is achieved for polygonal plates.

The well known Babu{\v s}ka paradox demonstrates that there may be difficulties with low degree approximation of the geometry.  Specifically, the paradox considers the effect of approximating the geometry only, without further numerical error.  It considers a uniformly loaded simply-supported circular plate and approximates the solution by the \emph{exact solution} of the same problem on an inscribed regular polygon.  As the number of sides of the polygon increases, the solution does \emph{not} converge to the solution on the disk.  The errors arising from linear approximation of the geometry leads to nonconvergence.  However, as we shall show below, if the problem on the polygon is solved using the lowest order HHJ method, then convergence is restored.  We further show that if higher order approximation of the geometry is combined with HHJ discretization of higher degree, the resulting method achieves any desired order.

While, to the best of our knowledge, the effect of domain approximation has not been studied before for the HHJ discretization of plates, its effect on the solution of second order problems by standard finite elements is classical.  See for instance, \cite{Strang_PDEproc1971,LRScott_PhD1973,Ciarlet_CMAME1972,Lenoir_SJNA1986}.  For simplicity, consider the Poisson equation $-\Delta u = f$ on $\Dom$ with the Dirichlet boundary condition $u=0$ on $\partial \Dom$.  Suppose we approximate the domain using parametric curved elements of some degree $m$ to obtain an approximate domain $\Dom^m$ consisting of elements with maximum diameter $h$.  To compare approximate solutions obtained on the approximation domain to the true solution on the true domain, we require a diffeomorphic mapping $\bPsi : \Dom^m \to \Dom$.  The error analysis
then depends on the identity \cite[Sec. 6]{Lenoir_SJNA1986}
\begin{equation*}
	\int_{\Dom} \grad{} u \cdot \grad{} v = \int_{\Dom^m} \grad{} \tilde{u} \cdot \grad{} \tilde{v} + \int_{\Dom^m} \grad{} \tilde{u} \cdot \left[ \det (\bJ) \bJ^{-1} \bJ\invtp - \eye \right] \grad{} \tilde{v},
\end{equation*}
where $\tilde{u} = u \circ \bPsi$ (and similarly for $v$) and $\bJ = \grad{} \bPsi$ is the Jacobian matrix.  The
mapping $\bPsi$ is defined so that $\| \bJ - \eye \|_{L^{\infty}} = O(h^{m})$ where $m$ is the degree of the polynomials used for the domain approximation.  This leads to an $O(h^m)$ bound on the geometric consistency error term, the second integral on the right-hand side of the above identity.  Choosing $m$ to equal or exceed the degree of the finite elements used to approximate the solution (isoparametric or superparametric approximation) then ensures that full approximation order is maintained with curved elements.

Convergence for fourth order problems is less well established.  In \cite{Monk_SJNA1987}, the biharmonic problem is split into two second order equations with curved isoparametric elements and slightly modified boundary conditions.  For plate problems, analysis of $C^1$ domain approximations have been considered (see \cite{Scott_Report1976,Chernuka_IJNME1972,Mansfield_SJNA1978,Tiihonen_MC2001}).

The purpose of this paper is to give a rigorous estimate of the error between the continuous solution on the true domain and the discrete solution on the approximate domain.
\emph{The main difficulty} in this is dealing with higher derivatives of the nonlinear map that appear in the analysis (for instance, see \cite[pg. 78]{Boffi_book2013} and \cite[Thm. 4.4.3]{Ciarlet_Book2002}).  For example, when mapping the Hessian, we have
$\hess{} v = \bJ^{-1} \left[ \hess{} \tilde{v} - \pd{\gamma} \tilde{v} \bGamma^{\gamma} \right] \bJ\invtp$, where $\bGamma^{\gamma}$ is a $2\times 2$ matrix whose entries are the Christoffel symbols $\christ{\gamma}{\alpha \beta}$ of the second kind for the induced metric.  These depend on \emph{second} derivatives of $\bPsi$, and, consequently, $\| \bGamma^{\gamma} \|_{L^{\infty}} = O(h^{m-1})$, so a naive handling of this term would yield sub-optimal results or no convergence at all for $m=1$.  Another related issue is the handling of jump terms (appearing in some mesh dependent norms) when affected by the nonlinear map.

The crucial tools needed to overcome these difficulties is the use of a Fortin-like operator \cref{eqn:b_h_form_interp_oper_Fortin} together with a particular optimal map \cref{ass:HHJ_Lag_interp_optimal_map} that is different from the curved element map given in \cite{LRScott_PhD1973,Lenoir_SJNA1986}.  The results we present here should be of relevance to simulating plate problems on smooth and piecewise smooth domains.

We close the introduction with a brief outline of the remainder of the paper.
\Cref{sec:plate_eqn} reviews the Kirchhoff plate problem and the mesh-dependent weak formulation behind the HHJ method.  \Cref{sec:curved_FEM} provides a quick review of curved finite elements and \cref{sec:manifold_HHJ} shows how to extend the classic HHJ method to curved elements.  \Cref{sec:error_analysis} provides the error analysis, which follows the framework of \cite{Babuska_MC1980} and \cite{Blum_CM1990}, where we use a formulation of the Kirchhoff plate problem based on mesh dependent spaces, and analyze it with mesh-dependent norms. \Cref{sec:numerical_results} gives numerical results and we conclude in \cref{sec:conclusions} with some remarks.  We also collect several basic or technical results in the supplementary materials.

\section{Preliminaries and statements of results}\label{sec:plate_eqn}
We begin by recalling the Kirchhoff plate problem and the HHJ discretization, and establishing our notations.
The domain of the plate, i.e., its undeformed mid-surface, is denoted by $\Dom \subset \R^2$ and its boundary by $\BdyDom := \dDom$.
Denoting the vertical displacement by $w$ and the bending moment tensor by $\bsigma$, the plate equations \cite[pg. 44--51]{Landau_Book1970} are
\begin{equation}\label{eqn:plate_eqs}
\bsigma = \belasten \,\hess{} w,\quad
\DivDiv{}\bsigma = f \text{ in $\Dom$},\quad
\belasten\btau:= \bendmod \left[ (1 - \poisson) \btau + \poisson \tr(\btau) \eye \right],
\end{equation}
where $\hess{} w$ denotes the Hessian of $w$, the iterated divergence $\DivDiv{}$ takes a matrix field to a scalar function, $f$ denotes the load function, and $\belasten$ is the constitutive tensor, 
with the bending modulus $\bendmod$ given by $\youngmod t^3/12 (1 - \poisson^2)$
in terms of Young's modulus $\youngmod$, the Poisson ratio $\poisson$, and the plate thickness $t$.  We assume that $\poisson \in (-1,1)$, so $\belasten$ is a symmetric positive-definite operator on the space $\symmat$ of symmetric $2\times 2$ tensors. For a standard material, $0 \leq \poisson < 1/2$.

The differential equations \cref{eqn:plate_eqs} are supplemented by boundary conditions on $\dDom$, such as
\begin{equation*}
w= \pd{} w / \pd{} \cni =0, \text{ for a clamped plate, or } ~ w= \signn = 0, \text{ for a simply-supported one}.
\end{equation*}
Here $\sigma^{\mathrm{nn}}=\bcn\tp\bsigma\bcn$ denotes the normal-normal component of $\bsigma$.

The Kirchhoff plate problem can be formulated weakly.  Taking $\SW=\zerobdy{H}^{2} (\Dom)$ or
$H^{2} (\Dom) \cap  \zerobdy{H}^{1} (\Dom)$ for clamped and simply-supported boundary conditions, respectively, $w\in\SW$ is uniquely determined by the weak equations
\begin{equation}\label{eqn:weak_form}
\inner{\belasten \hess{} w}{\hess{} v}{} = \duality{f}{v}{}, \text{ for all $v \in \SW$,}
\end{equation}
for any $f$ in $L^2(\Dom)$, or, more generally, in $\SW^*$.  Note that we use standard notations $H^m(\Dom)$ and $\zerobdy H^m(\Dom)$ for Sobolev spaces, with the latter subject to vanishing traces.

Next, we recall the HHJ method, first in the case of a polygonal domain (or a polygonal approximation to the true domain), and then for higher order polynomial approximations to the domain, which is the main subject here.  Let $\TkDom{h}{}$ be a triangulation of the polygonal domain $\Dom$
and let the degree $r\ge 0$ be fixed.  The transverse displacement $w$ will be approximated in the usual Lagrange finite element space
\begin{equation*}
\LAG_{h} = \{ v \in \zerobdy{H}^{1} (\Dom) \mid v |_{T} \in \Pk_{r+1}(T) ~ \forall T \in \TkDom{h}{} \},
\end{equation*}
while the bending moment tensor $\bsigma$ will be sought in the HHJ space
\begin{equation}\label{eqn:discrete_tensor_space}
\HHJ_{h} = \{ \bvarphi \in  L^2 (\Dom;\symmat) \mid \bvarphi |_{T} \in\Pk_r(T;\symmat)~ \forall T \in \TkDom{h}{}, \text{ $\bvarphi$ normal-normal continuous}\}.
\end{equation}
The normal-normal continuity condition means that, if two triangles $T_1$ and $T_2$ share a common edge $E$, then $\bcn\tp(\bvarphi|_{T_1})\bcn = \bcn\tp(\bvarphi|_{T_2})\bcn$ on $E$. 
For simply-supported boundary conditions, the space $\HHJ_{h}$ also incorporates the vanishing of $\varphinn$ on boundary edges.

Assume that $\bsigma$ belongs to $H^1(\Dom;\symmat)$ and $f$ belongs to $L^2(\Dom)$ (this is for simplicity; it can be weakened).  Multiplying the second equation in \cref{eqn:plate_eqs} by a test function $v \in \LAG_h$ and integrating over a triangle $T$, we obtain
\begin{align*}
(f, v)_T &= (\DivDiv{}\bsigma,v)_T = -(\Div{}\bsigma,\grad{}v)_T=(\bsigma,\hess{} v)_T - \langle \bsigma\bcn, \grad{}v\rangle_{\partial T} 
\\ &= (\bsigma,\hess{} v)_T - \langle \bcn^T\bsigma\bcn, \pd{} v/\pd{} \cni\rangle_{\partial T} - \langle \bt^T\bsigma\bcn, \pd{} v/\pd{} \tvi\rangle_{\partial T}.
\end{align*}
Next, we sum this equation over all the triangles $T$.  The penultimate term gives
\begin{equation*}
\sum_T\langle \bcn^T\bsigma\bcn, \pd{} v/\pd{} \cni\rangle_{\partial T} 
= \sum_{E \in \EkDom{h}{}} \duality{\signn}{\jump{\pd{} v/\pd{} \cni}}{E}.
\end{equation*}
Here $\jump{\eta}$ denotes the jump in a quantity $\eta$ across a mesh edge $E$, so if the edge $E$ is shared by two triangles $T_1$ and $T_2$
with outward normals $\bcn_1$ and $\bcn_2$, then $\jump{\pd{} v/\pd{} \cni} = \bcn_1\cdot\grad{} v|_{T_1} + \bcn_2\cdot\grad{}v|_{T_2}$ on $E$.
For $E$ a boundary edge, we set $\jump{\eta}=\eta|_E$. For the final term above, we obtain $\sum_T\langle \bt^T\bsigma\bcn, \pd{} v/\pd{} \tvi\rangle_{\partial T}=0$, since $\pd{} v/\pd{} \tvi$ is continuous across interior edges and the normal vector switches sign; moreover, $\pd{} v/\pd{} \tvi$ vanishes on boundary edges. Thus, if we define the bilinear form
\begin{equation*}
 \bfman{}{\bvarphi}{v} = -\sum_{T \in \TkDom{h}{}} \inner{\bvarphi}{\hess{} v}{T} + \sum_{E \in \EkDom{h}{}} \duality{\varphinn}{\jump{\pd{} v/\pd{} \cni}}{E},\quad
 \bvarphi\in \HHJ_{h},\ v\in \LAG_{h},
\end{equation*}
we have $\bfman{}{\bsigma}{v}  = -\duality{f}{v}{}$ for all $v \in \LAG_{h}$.
We define as well a second bilinear form 
\begin{equation*}
\afman{}{\btau}{\bvarphi} = \inner{\binvelasten \btau}{\bvarphi}{}, ~~ \btau,\bvarphi\in \HHJ_{h}, ~~ \text{where }~ \binvelasten \btau := \frac{1}{\bendmod} \left[ \frac{1}{1 - \poisson} \btau - \frac{\poisson}{1 - \poisson^2} \tr (\btau) \eye \right],
\end{equation*}
with $\binvelasten$ the inverse of $\belasten$.  Using the first equation in \cref{eqn:plate_eqs} and the continuity of $\pd{} w/\pd{} \cni$, we have
$\afman{}{\bsigma}{\btau} + \bfman{}{\btau}{w} = 0$ for any $\btau\in\HHJ_{h}$.  This leads us to
the HHJ mixed method, which defines $\bsigma_h\in \HHJ_{h}$, $w_h\in\LAG_{h}$ by
\begin{equation}\label{eqn:HHJ_method}
\begin{gathered}
\afman{}{\bsigma_h}{\btau} + \bfman{}{\btau}{w_h} = 0, \quad \forall \btau \in \HHJ_{h}, \\
\bfman{}{\bsigma_h}{v} = -\duality{f}{v}{}, ~ \forall v \in \LAG_{h}.
\end{gathered}
\end{equation}
This method has been analyzed by numerous authors with different techniques.  The present analysis owes the
most to \cite{Babuska_MC1980} and \cite{Blum_CM1990}.  In particular, optimal
$O(h^{r+1})$ convergence for $\bsigma$ in $L^2$ and $w$ in $H^1$ has been established for smooth solutions.

If the plate domain $\Dom$ is not polygonal, a simple possibility is to construct a polygonal approximate domain.  For this, we let $\TkDom{h}{1}$ denote a triangulation consisting of straight-edged triangles with interior vertices belonging to $\Dom$ and boundary vertices belonging to $\dDom$.  The approximate domain $\Dom^1$ is the region triangulated by $\TkDom{h}{1}$.  We assume further that no element of $\TkDom{h}{1}$ has more than one edge on the boundary of $\Dom^1$, and call those that have such an edge boundary triangles.

\begin{figure}[ht]
\begin{center}
\includegraphics[width=2.2in]{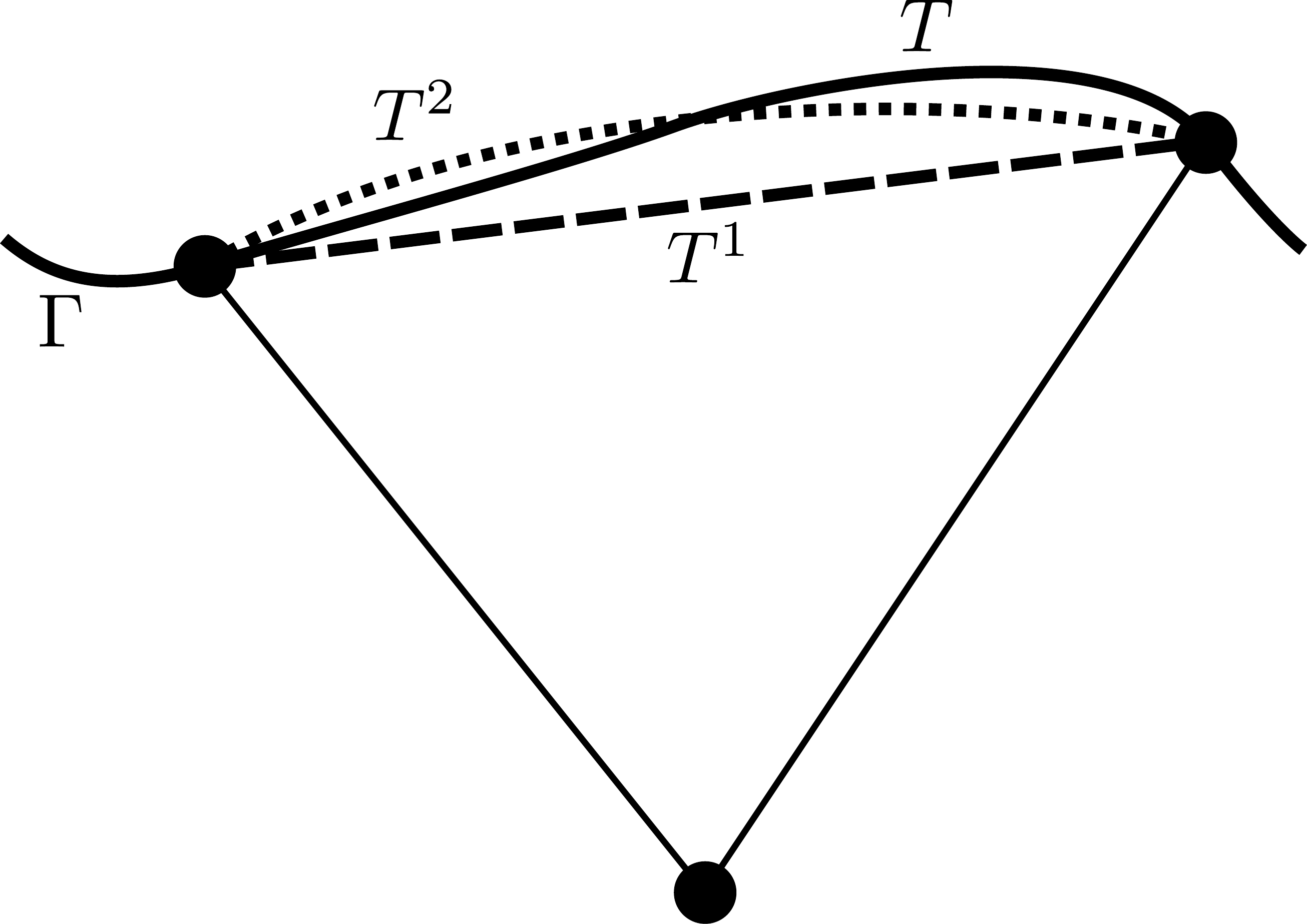}
\caption{A straight-edged triangle $T^1$, degree $2$ curvilinear triangle $T^2$, and curvilinear triangle $T$ exactly conforming to the boundary $\Gamma$. All three share the two straight edges, with the corresponding boundary edges shown as dashed, dotted, and solid, respectively.}
\label{fig:triangles}
\end{center}
\end{figure}

As we shall see, in the case of the lowest order HHJ elements, $r=0$, such a polygonal approximation of the geometry does not degrade the rate of convergence of the numerical scheme (the Babu\v ska paradox notwithstanding).  For higher order elements, however, we need to make a better approximation of the geometry in order to obtain the approximate rate, just as is true when solving the Poisson problem with standard Lagrange finite elements \cite{Lenoir_SJNA1986}.  We now briefly describe the procedure (see \cref{fig:triangles}), the full specification and analysis of which will occupy the remainder of the paper.  Let $m\ge 1$ denote the integer degree of approximation of the geometry (so $m=1$ corresponds to the polygonal approximation).  To each triangle $T^1\in\TkDom{h}{1}$ we associate a curvilinear triangle $T^m$ and  a diffeomorphism $\MapT{T}^m:T^1\to T^m$ which is a polynomial map of degree $m$.  In the case where $T^1$ is a boundary triangle, we require that $\MapT{T}^m$ restricts to the identity on the two non-boundary edges of $T^1$, and in case $T^1$ is not a boundary triangle, we simply take $T^m=T^1$ and $\MapT{T}^m$ to be the identity.  We require that the set of all such curvilinear triangles forms a triangulation $\TkDom{h}{m}$ of the domain $\Dom^m =\bigcup_{T^1\in\TkDom{h}{}} \MapT{T}^m(T^1)$, which is a polynomial approximation of the true domain of degree $m$.  Note that, the map $\MapT{}^m:\Dom^1\to \Dom^m$ given by $\MapT{}^m|_{T^1} =\MapT{T}^m$, for all $T^1\in\TkDom{h}{1}$, is a diffeomorphism of the polygonal approximate domain onto the approximate domain of degree $m$.

Using the mapping $\MapT{}^m$ we may transform the finite element spaces $\LAG_{h}$ and $\HHJ_{h}$ from the polygonal approximate domain to the degree $m$ approximation $\Dom^m$.  For $\LAG_{h}$, the transformation is a simple composition, but for the tensor space $\HHJ_{h}$ we must use the \emph{matrix Piola transform}, which preserves normal-normal continuity.  In this way, we obtained a mixed discretization of the plate problem based on elements of degree $r+1$ and $r$ for $w$ and $\bsigma$, respectively, and geometric approximation of degree $m$. The integers $r\ge0$ and $m\ge1$ can be taken arbitrarily, but we show that to obtain the same optimal rates of convergence on a curved domain, as occurs for smooth solutions on a polygonal domain, it is sufficient to take $m\ge r+1$, e.g., polygonal approximation ($m=1$) is sufficient for the lowest order HHJ elements ($r=0$), but
approximation must be at least quadratic to obtain optimality when $r=1$, and so forth.  Numerical experiments are included to show the necessity of this restriction.

\subsection{Boundary Assumptions}\label{sec:bdy_assume}

\begin{figure}[ht]
\begin{center}

\includegraphics[width=3.0in]{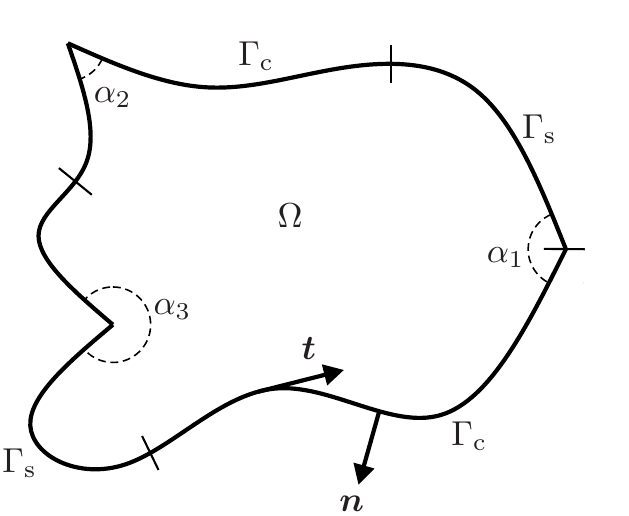}
\caption{Illustration of plate domain $\Dom$.  The boundary $\BdyDom$ decomposes as $\BdyDom = \CL{\Bclamped} \cup \CL{\Bsimsupp}$ and has a finite number of corners with interior angles $\alpha_{i}$; the corners may (or may not) lie at the intersection of two boundary components.  The outer unit normal vector is $\bcn$ and oriented unit tangent vector is $\btv$.
}
\label{fig:plate_domain}
\end{center}
\end{figure}

We shall allow for mixed boundary conditions, clamped on part of the domain, and simply-supported on the rest.  To this end, we
assume that $\BdyDom$ is piecewise smooth with a \emph{finite} number of corners, where the interior angle $\alpha_i$ of the $i$th corner satisfies $\alpha_i \in (0, 2\pi]$ (see \cref{fig:plate_domain}).  In particular, $\BdyDom$ is globally continuous and can be parameterized by a piecewise $C^{k+1}$ curve for some $k \geq 1$, i.e., $\BdyDom = \bigcup_{p \in \cornervtx} p \; \cup \bigcup_{\zeta \in \smoothcurve} \zeta$, 
where $\cornervtx$ is the set of corner vertices and $\smoothcurve$ is the set of (open) $C^{k+1}$ curves that make up $\BdyDom$.
Moreover, we assume $\BdyDom = \CL{\Bclamped} \cup \CL{\Bsimsupp}$ partitions into two mutually disjoint one dimensional components $\Bclamped$ (clamped) and $\Bsimsupp$ (simply supported).  Each open curve $\zeta \in \smoothcurve$ belongs to only one of the sets $\Bclamped$ or $\Bsimsupp$ and each curve is maximal such that two distinct curves contained in the same component do not meet at an angle of $\pi$.  At the expense of small additional technical and notational complications, we could allow a partition of the boundary into three sets rather than two, imposing free boundary conditions on the third portion.

With the above partition of $\BdyDom$, we have the following set of boundary conditions:
\begin{equation}\label{eqn:Kirchhoff_plate_BCs_general}
\begin{split}
	w &= \pd{} w / \pd{} \cni = 0, \text{ on } \Bclamped, \quad w = \bcn\tp \bsigma \bcn = 0, \text{ on } \Bsimsupp.
\end{split}
\end{equation}
Extending the definition of the energy space $\SW$ to account for these mixed boundary conditions,
\begin{equation}\label{eqn:displacement_space_true}
\begin{split}
\SW (\Dom) := \{ v \in H^2(\Dom) \mid v = 0, \text{ on } \Bclamped \cup \Bsimsupp, ~ \pd{\bcn} v = 0, \text{ on } \Bclamped \},
\end{split}
\end{equation}
we have that $\inner{\belasten \hess{} v}{\hess{} v}{} \geq a_0 \| v \|_{H^2(\Dom)}$ for all $v\in\SW$ and some constant $a_0 > 0$.  Consequently, there exists a unique $w\in\SW$ satisfying the the plate equations in the weak formulation \cref{eqn:weak_form}.

\subsection{Continuous Mesh-dependent Formulation}\label{sec:contin_non-conform_abstract}

The main difficulty in solving \cref{eqn:weak_form} numerically is that $\SW \subset H^2(\Dom)$ and so $C^1$ elements are required for a conforming discretization. 
We adopt the approach in \cite{Brezzi_RIA1976,Babuska_MC1980,Arnold_M2AN1985,Blum_CM1990} and use a mesh-dependent version of $H^2(\Dom)$.  We start by partitioning the domain $\Dom$ with a mesh $\TkDom{h}{} = \{ T \}$ of triangles such that $\Dom = \bigcup_{T \in \TkDom{h}{}} T$, where $h_T := \diam (T)$ and $h := \max_{T} h_T$, and assume throughout that the mesh is quasi-uniform and shape regular.  We further assume the corners of the domain are captured by vertices of the mesh.

Next, we have the \emph{skeleton} of the mesh, i.e., the set of mesh edges $\EkDom{h}{} := \partial \TkDom{h}{}$.  Let $\EkBdyDom{h}{} \subset \EkDom{h}{}$ denote the subset of edges that are contained in the boundary $\BdyDom$ and respect the boundary condition partition of $\BdyDom$.  The internal edges are given by $\EkIntDom{h}{} := \EkDom{h}{} \setminus \EkBdyDom{h}{}$. Note that elements in $\TkDom{h}{}$, $\EkDom{h}{}$ may be curved.  For now, we assume $\BdyDom$ is piecewise smooth (at least $C^2$) with a \emph{finite} number of corners to which the mesh conforms (see \cref{sec:bdy_assume} for more detailed assumptions).

The spaces in the following sections are infinite dimensional, but defined in a ``broken'' way with respect to the partition.  Thus, we adopt standard dG notation for writing inner products and norms over the partition, e.g.,
\begin{equation}\label{eqn:defn_broken_integrals}
\begin{split}
\inner{f}{g}{\TkDom{h}{}} &:= \sum_{T \in \TkDom{h}{}} \inner{f}{g}{T}, \quad \inner{f}{g}{\EkDom{h}{}} := \sum_{E \in \EkDom{h}{}} \inner{f}{g}{E}, \\
\| f \|_{L^p(\TkDom{h}{})}^{p} &:= \sum_{T \in \TkDom{h}{}} \| f \|_{L^p(T)}^{p}, \quad \| f \|_{L^p(\EkDom{h}{})}^{p} := \sum_{E \in \EkDom{h}{}} \| f \|_{L^p(E)}^{p}.
\end{split}
\end{equation}

We shall make repeated use of the following scaling/trace estimate \cite[Thm 3.10]{Agmon_book1965}:
\begin{equation}\label{eqn:scaling_trace_estimate}
\begin{split}
	\| v \|_{L^2(\partial T)}^2 &\leq C \left( h^{-1} \| v \|_{L^2(T)}^2 + h \| \grad{} v \|_{L^2(T)}^2 \right), \text{ for all } v \in H^1(T).
\end{split}
\end{equation}

\subsubsection{Skeleton Spaces}\label{sec:skeleton_spaces}

We follow \cite{Babuska_MC1980} in defining infinite dimensional, but mesh dependent spaces and norms.  A mesh-dependent version of $H^2(\Dom)$ is given by
\begin{equation}\label{eqn:scalar_H2_h}
H^2_{h} (\Dom) := \{ v \in H^1(\Dom) \mid v |_{T} \in H^2(T), \text{ for } T \in \TkDom{h}{} \},
\end{equation}
with the following semi-norm
\begin{equation}\label{eqn:scalar_H2_h_norm}
\begin{split}
\| v \|_{2,h}^2 &:= \| \hess{} v \|_{L^2(\TkDom{h}{})}^2 + h^{-1}  \left\| \jump{\bcn \cdot \grad{} v} \right\|_{L^2(\EkIntDom{h}{})}^2 + h^{-1} \left\| \jump{\bcn \cdot \grad{} v} \right\|_{L^2(\Bclamped)}^2,
\end{split}
\end{equation}
where $\jump{\eta}$ is the jump in quantity $\eta$ across mesh edge $E$, and $\bcn$ is the unit normal on $E \in \EkDom{h}{}$; on a boundary edge, $\jump{\eta} \equiv \eta$.
Next, for any $\bvarphi \in H^{1}(\Dom;\symmat)$ define
\begin{equation}\label{eqn:matrix_H0_h_norm}
\| \bvarphi \|_{0,h}^2 := \| \bvarphi \|_{L^2(\Dom)}^2 + h \left\| \bcn\tp \bvarphi \bcn \right\|_{L^2(\EkDom{h}{})}^2,
\end{equation}
and define $H^0_{h}$ to be the completion: $H^0_{h} (\Dom;\symmat) := \CL{H^{1}(\Dom;\symmat)}^{\| \cdot \|_{0,h}}$. 
Note that $H^0_{h} (\Dom;\symmat) \equiv L^2 (\Dom;\symmat) \oplus L^2 (\EkDom{h}{};\R)$, i.e., $\bvarphi \in H^0_{h} (\Dom;\symmat)$ is actually $\bvarphi \equiv (\bvarphiT,\varphinn)$, where $\bvarphiT \in L^2 (\Dom;\symmat)$ and $\varphinn \in L^2 (\EkDom{h}{})$, with no connection between $\bvarphiT$ and $\varphinn$.  We also have that $\bvarphi \in H^{1}(\Dom;\symmat) \subset H^0_{h} (\Dom;\symmat)$ implies $\bcn\tp \bvarphiT \bcn |_{\EkDom{h}{}} = \varphinn$ \cite{Babuska_MC1980}. 
Furthermore, we have a scalar valued function version of $\| \cdot \|_{0,h}$:
\begin{equation}\label{eqn:scalar_L2_mesh_dependent_norm_abstract}
\| v \|_{0,h}^2 := \| v \|_{L^2(\Dom)}^2 + h \| v \|_{L^2(\EkDom{h}{})}^2, \quad \text{for all } v \in H^1(\Dom),
\end{equation}
which satisfies the following estimate (proved in section SM1).
\begin{proposition}\label{prop:scalar_H0_h_norm_ineq}
For all $v \in H^1(\Dom)$, $\| v \|_{0,h}^2 \leq C \left( \| v \|_{L^2(\Dom)}^2 + h^{2} \| \grad{} v \|_{L^2(\Dom)}^2 \right)$, for some independent constant $C$.
\end{proposition}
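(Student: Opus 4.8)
The plan is to observe that the only term in $\|v\|_{0,h}^2$ not already present on the right-hand side is $h\|v\|_{L^2(\EkDom{h}{})}^2 = h\sum_{E\in\EkDom{h}{}}\|v\|_{L^2(E)}^2$, and to control this sum edge-by-edge using the scaling/trace estimate \cref{eqn:scaling_trace_estimate}, followed by a finite-overlap argument to reassemble the triangle contributions.

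First I would fix $v\in H^1(\Dom)$; then $v|_T\in H^1(T)$ for every $T\in\TkDom{h}{}$, so \cref{eqn:scaling_trace_estimate} is applicable on each triangle. Given an edge $E\in\EkDom{h}{}$, I would select a triangle $T=T(E)$ having $E$ among its (possibly curved) edges. Since $E\subset\partial T$, $\|v\|_{L^2(E)}^2\le\|v\|_{L^2(\partial T)}^2\le C\bigl(h^{-1}\|v\|_{L^2(T)}^2+h\|\grad{}v\|_{L^2(T)}^2\bigr)$, with $C$ depending only on the shape-regularity constant.

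Next I would sum over $E\in\EkDom{h}{}$. Each triangle has three edges, so by shape regularity it is chosen as $T(E)$ for at most three edges $E$; hence $\sum_{E\in\EkDom{h}{}}\|v\|_{L^2(E)}^2\le C\sum_{T\in\TkDom{h}{}}\bigl(h^{-1}\|v\|_{L^2(T)}^2+h\|\grad{}v\|_{L^2(T)}^2\bigr)=C\bigl(h^{-1}\|v\|_{L^2(\Dom)}^2+h\|\grad{}v\|_{L^2(\Dom)}^2\bigr)$, using the broken-norm notation \cref{eqn:defn_broken_integrals} and $\Dom=\bigcup_T T$. Multiplying by $h$ and adding $\|v\|_{L^2(\Dom)}^2$ gives, via the definition \cref{eqn:scalar_L2_mesh_dependent_norm_abstract}, exactly the asserted bound.

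I do not expect any genuine obstacle here; the one point needing a little care is that mesh edges and triangles may be curved, so one must use the form of \cref{eqn:scaling_trace_estimate} valid on curved shape-regular elements (with constant controlled by the shape-regularity bound and the bounds on the element maps), rather than the straight-simplex version. Equivalently, one can pull back to the reference triangle, apply the flat trace inequality there, and push forward, absorbing the Jacobian factors into $C$ using quasi-uniformity.
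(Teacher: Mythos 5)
Your proof is correct and is essentially the argument the paper gives in its supplementary material: the only term to control is $h\|v\|_{L^2(\EkDom{h}{})}^2$, and it is bounded exactly by applying the scaling/trace estimate \cref{eqn:scaling_trace_estimate} on each element and summing, with the factor of $h$ converting $h^{-1}\|v\|_{L^2(T)}^2 + h\|\grad{}v\|_{L^2(T)}^2$ into the asserted right-hand side. Your remarks on the finite overlap (each triangle serving at most three edges) and on the validity of the trace estimate on curved shape-regular elements are the right points of care and are handled correctly.
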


Next, introduce the following skeleton subspaces
\begin{equation}\label{eqn:skeleton_spaces_general_bdy_cond}
\begin{split}
\SW_{h} &:= H^2_h(\Dom) \cap \zerobdy{H}^1(\Dom), \quad
\SV_{h} := \{ \bvarphi \in H^0_{h} (\Dom;\symmat) \mid \varphinn = 0 \text{ on } \Bsimsupp \},
\end{split}
\end{equation}
where $\SW_{h}$ is a mesh-dependent version of \cref{eqn:displacement_space_true} and $\SV_{h}$ is used for the stress $\bsigma$.  Note how essential and natural boundary conditions are imposed differently in \cref{eqn:skeleton_spaces_general_bdy_cond} than in \cref{eqn:displacement_space_true}.
In addition, we have the following Poincar\'{e} inequality, which follows by standard integration by parts arguments \cite{Blum_CM1990}.
\begin{proposition}\label{prop:scalar_H2_h_norm_trace_Poincare}
Define $\trinorm{v}_{2,h}^2 := \| \hess{} v \|_{L^2(\TkDom{h}{})}^2 + h^{-1}  \left\| \jump{\bcn \cdot \grad{} v} \right\|_{L^2(\EkIntDom{h}{})}^2$.  Then, $\trinorm{\cdot}_{2,h}$ is a norm on $\SW_{h}$. 
Moreover, there is a constant $\meshpoincare > 0$, depending only on $\Dom$, such that
\begin{equation}\label{eqn:scalar_H2_h_norm_trace_Poincare}
\begin{split}
	\| \grad{} v \|_{L^2(\Dom)} \leq \meshpoincare \trinorm{v}_{2,h}, \text{ for all } v \in \SW_{h}.
\end{split}
\end{equation}
\end{proposition}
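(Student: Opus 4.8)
The plan is to prove the claimed Poincar\'e inequality directly from an elementwise integration by parts, and then to deduce from it that $\trinorm{\cdot}_{2,h}$ is definite, hence a norm on $\SW_{h}$. First I would fix $v\in\SW_{h}$; since $v|_T\in H^2(T)$ for each $T\in\TkDom{h}{}$, integration by parts on each element gives $\inner{\grad{}v}{\grad{}v}{T} = -\inner{v}{\Delta v}{T} + \duality{v}{\bcn\cdot\grad{}v}{\partial T}$. Summing over $T$ and regrouping the element-boundary integrals over the skeleton, on each interior edge the two contributions combine --- using the single-valuedness of $v$ (since $\SW_{h}\subset\zerobdy H^1(\Dom)$) and the opposite orientations of the two element normals --- into $\duality{v}{\jump{\bcn\cdot\grad{}v}}{E}$, while on each boundary edge the integrand vanishes because $v=0$ on $\BdyDom$. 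This yields the identity
\begin{equation*}
\|\grad{}v\|_{L^2(\Dom)}^2 = -\inner{v}{\Delta v}{\TkDom{h}{}} + \duality{v}{\jump{\bcn\cdot\grad{}v}}{\EkIntDom{h}{}}.
\end{equation*}

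Next I would bound the right-hand side. By Cauchy--Schwarz and $|\Delta v|\le\sqrt{2}\,|\hess{}v|$ in $\R^2$, the first term is at most $\sqrt{2}\,\|v\|_{L^2(\Dom)}\,\|\hess{}v\|_{L^2(\TkDom{h}{})}$. For the second, Cauchy--Schwarz over edges together with the scaling/trace estimate \cref{eqn:scaling_trace_estimate} summed over elements gives $\|v\|_{L^2(\EkDom{h}{})}\le C(h^{-1/2}\|v\|_{L^2(\Dom)}+h^{1/2}\|\grad{}v\|_{L^2(\Dom)})$, and hence a bound $C(h^{-1/2}\|v\|_{L^2(\Dom)}+h^{1/2}\|\grad{}v\|_{L^2(\Dom)})\,\|\jump{\bcn\cdot\grad{}v}\|_{L^2(\EkIntDom{h}{})}$ for the skeleton term. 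Inserting the standard Poincar\'e inequality $\|v\|_{L^2(\Dom)}\le C_{\Dom}\|\grad{}v\|_{L^2(\Dom)}$ (valid because $v\in\zerobdy H^1(\Dom)$) and using $h\le 1$ so that $h^{1/2}\le h^{-1/2}$, every term on the right now carries exactly one factor $\|\grad{}v\|_{L^2(\Dom)}$; dividing through (the case $\grad{}v=0$ being trivial) gives $\|\grad{}v\|_{L^2(\Dom)}\le \meshpoincare\trinorm{v}_{2,h}$ with $\meshpoincare$ depending only on $\Dom$ and the shape-regularity constant of the mesh. Finally, $\trinorm{\cdot}_{2,h}$ is an $\ell^2$-combination of $L^2$ seminorms, so it is positively homogeneous and satisfies the triangle inequality; and if $\trinorm{v}_{2,h}=0$ the inequality just proved forces $\grad{}v=0$, so $v$ is locally constant and, vanishing on $\BdyDom$, is identically zero. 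Hence $\trinorm{\cdot}_{2,h}$ is a norm on $\SW_{h}$.

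The main point requiring care is the bookkeeping in the first step --- regrouping the sum of element-boundary integrals into a single sum over interior edges, and checking that the boundary-edge terms drop because of the essential condition $v|_{\BdyDom}=0$ --- together with tracking the powers of $h$ so that the lower-order contribution $h^{1/2}\|\grad{}v\|_{L^2(\Dom)}\,\|\jump{\bcn\cdot\grad{}v}\|$ can be absorbed on the left. Beyond that, the argument is routine.
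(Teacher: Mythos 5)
Your proof is correct and is precisely the ``standard integration by parts argument'' that the paper invokes (citing Blum--Rannacher) without writing out: elementwise integration by parts to produce the interior-edge jump terms, the trace/scaling estimate \cref{eqn:scaling_trace_estimate} together with the ordinary Poincar\'e inequality on $\zerobdy H^1(\Dom)$ to absorb $\|v\|_{L^2}$, and definiteness of $\trinorm{\cdot}_{2,h}$ deduced from the resulting inequality. The only caveat, which you already note, is that the constant also depends on the shape-regularity/quasi-uniformity of the mesh (and a bound $h\le h_0$), which is assumed throughout the paper.
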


\subsubsection{Mixed Skeleton Formulation}

Following \cite{Babuska_MC1980,Blum_CM1990}, we define a broken version of the Hessian operator.  Recalling the earlier discussion, we extend $\bfman{}{\bvarphi}{v}$ to all $\bvarphi \in H^0_{h} (\Dom;\symmat)$ and $v \in H^2_{h} (\Dom)$, i.e.
\begin{equation}\label{eqn:discrete_b_form}
\begin{split}
\bfman{}{\bvarphi}{v} 
&= -\inner{\bvarphiT}{\hess{} v}{\TkDom{h}{}} + \duality{\varphinn}{\jump{\bcn \cdot \grad{} v}}{\EkDom{h}{}},
\end{split}
\end{equation}
and extend $\afman{}{\bvarphi}{\bsigma}$ to all $\btau, \bvarphi \in H^0_{h} (\Dom;\symmat)$:
\begin{equation}\label{eqn:discrete_a_form}
\begin{split}
\afman{}{\btau}{\bvarphi} &= \sum_{T \in \TkDom{h}{}} \inner{\btau}{\binvelasten \bvarphi}{T} \equiv \inner{\btau}{\binvelasten \bvarphi}{\TkDom{h}{}}.
\end{split}
\end{equation}
Thus, we pose the following mixed weak formulation of the Kirchhoff plate problem.  Given $f \in H^{-1}(\Dom)$, find $\bsigma \in \SV_{h}$, $w \in \SW_{h}$ such that
\begin{equation}\label{eqn:Kirchhoff_manifold_skeleton_mixed}
\begin{split}
\afman{}{\bsigma}{\bvarphi} + \bfman{}{\bvarphi}{w} &= 0, \quad \forall \bvarphi \in \SV_{h}, \\
\bfman{}{\bsigma}{v} &= -\duality{f}{v}{}, ~ \forall v \in \SW_{h},
\end{split}
\end{equation}
where $\duality{\cdot}{\cdot}{}$ is the duality pairing between $H^{-1}$ and $\zerobdy{H}^{1}$.

\begin{remark}\label{rem:abstract_manifold_skeleton_mixed}
Assume for simplicity that the domain is smooth and $f \in H^{-1}$.  Then the solution  $(\bsigma,w)$ of \cref{eqn:plate_eqs} satisfies $w\in H^3$ and $\bsigma\in H^1$ and the pair $(\bsigma,w)$ solves \cref{eqn:Kirchhoff_manifold_skeleton_mixed}. In addition, any solution of \cref{eqn:Kirchhoff_manifold_skeleton_mixed} is also a solution of \cref{eqn:plate_eqs}, and so the formulations are equivalent. See \cite[Sec. 3]{Blum_CM1990} for details.

\end{remark}

\section{Curved Finite Elements}\label{sec:curved_FEM}

The basic theory of curved elements initiated in \cite{Ciarlet_CMAME1972} in two dimensions, with specific procedures for some low degree isoparametric Lagrange elements.  In  \cite{Zlamal_SJNA1974,Zlamal_IJNME1973,Zlamal_SJNA1973}, a theory for arbitrarily curved (two-dimensional) elements was given, while \cite{LRScott_PhD1973} gave a general procedure for arbitrary order isoparametric elements.  Later, \cite{Lenoir_SJNA1986} generalized the theory to any dimension and gave a method of constructing the curved elements.  The following sections give the essential parts of \cite{Lenoir_SJNA1986} that we need for this paper; section SM2 
gives a more complete review.

\subsection{Curved Triangulations}\label{sec:curved_triangulations}

We recall the parametric approach to approximating a domain with a curved boundary by a curvilinear triangulation $\TkDom{h}{m}$ of order $m\ge 1$, following \cite{Lenoir_SJNA1986}. The process begins with a conforming, shape-regular, straight-edged triangulation $\TkDom{h}{1}$ which triangulates a polygon $\Dom^1$ interpolating $\Dom$ (in the sense that the boundary vertices of $\Dom^1$ lie on the boundary of $\Dom$).  We define $\TkBdyDom{h}{1}$ to be the set of triangles with at least one vertex on the boundary. We make the following assumption.
\begin{hypothesis}\label{hyp:one_side_on_bdy}
Each triangle in $\TkDom{h}{1}$ has at most two vertices on the boundary and so has at most one edge contained in $\BdyDom^{1}$.
\end{hypothesis}
Next, for each $T^{1} \in \TkDom{h}{1}$, we define a map $\MapT{T}^{m}:T^{1}\to\R^2$ of polynomial degree $m$ which maps $T^{1}$ diffeomorphically onto a curvilinear triangle $T^{m}$. The map is determined by specifying the images of the Lagrange nodes of degree $m$ on $T^{1}$.  Nodes on an interior edge of $T^{1}$ are specified to remain fixed, while those on a boundary edge have their image determined by interpolation of a chart defining the boundary.  Nodes interior to $T^1$ are mapped in an intermediate fashion through their barycentric coordinates. See equation (14) of \cite{Lenoir_SJNA1986} for an explicit formula for $\MapT{T}^{m}\circ\widehat{\MapT{T}^{1}}$ where $\widehat{\MapT{T}^{1}}$ is the affine map from the standard reference triangle to $T^1$.  The maps $\MapT{T}^{m}$ so determined satisfy optimal bounds on their derivatives, as specified in \cite[Thm.~1 and 2]{Lenoir_SJNA1986}.  Moreover, the triangulation $\TkDom{h}{m}$ consisting of all the curvilinear triangles $T^m = \MapT{T}^{m}(T^1)$, $T^1 \in \TkDom{h}{1}$, is itself a conforming, shape regular triangulation that approximates $\Dom$ by $\Dom^{m} := \bigcup_{T^{m} \in \TkDom{h}{m}} T^{m}$.  We also denote by $\EkDom{h}{m}$ the set of edges of the triangulation $\TkDom{h}{m}$, which is partitioned into interior edges $\EkIntDom{h}{m}$ (all straight) and boundary edges $\EkBdyDom{h}{m}$ (possibly curved).  Thus $\BdyDom^{m} := \bigcup_{E^{m} \in \EkBdyDom{h}{m}} E^{m}$ is an $m$th order approximation of $\BdyDom$.
Note that, by construction, (i) $\MapT{T}^{1} \equiv \ident{T^{1}}$; (ii) if $T^{1}$ has no side on $\BdyDom$, then $\MapT{T}^{m} \equiv \ident{T^{1}}$; (iii) $\MapT{T}^{m} |_{E} = \ident{T^{1}} |_{E}$ for all interior edges $E \in \EkIntDom{h}{m}$.

Of course, the polynomial maps $\MapT{T}^{m}$ may be combined to define a piecewise polynomial diffeomorphism $\MapT{}^m:\Dom^1\to\Dom^m$.  Moreover, two of these maps, for degrees $l$ and $m$, may be combined to give a map between the corresponding approximate domains.  Referring to \cref{fig:mappings}(a), it is defined piecewise by
\begin{equation}\label{eqn:inter_curved_elem_map}
\begin{split}
\bPhi^{lm} |_{T} = \bPhi^{lm}_{T} &: T^{l} \to T^{m}, ~\text{ where }~ \bPhi^{lm}_{T} := \MapT{T}^{m} \circ (\MapT{T}^{l})^{-1}, \text{ so } \bPhi^{1m}_{T} \equiv \MapT{T}^{m}.
\end{split}
\end{equation}

\begin{figure}[ht]
\begin{center}
\centerline{\includegraphics[width=2.0in]{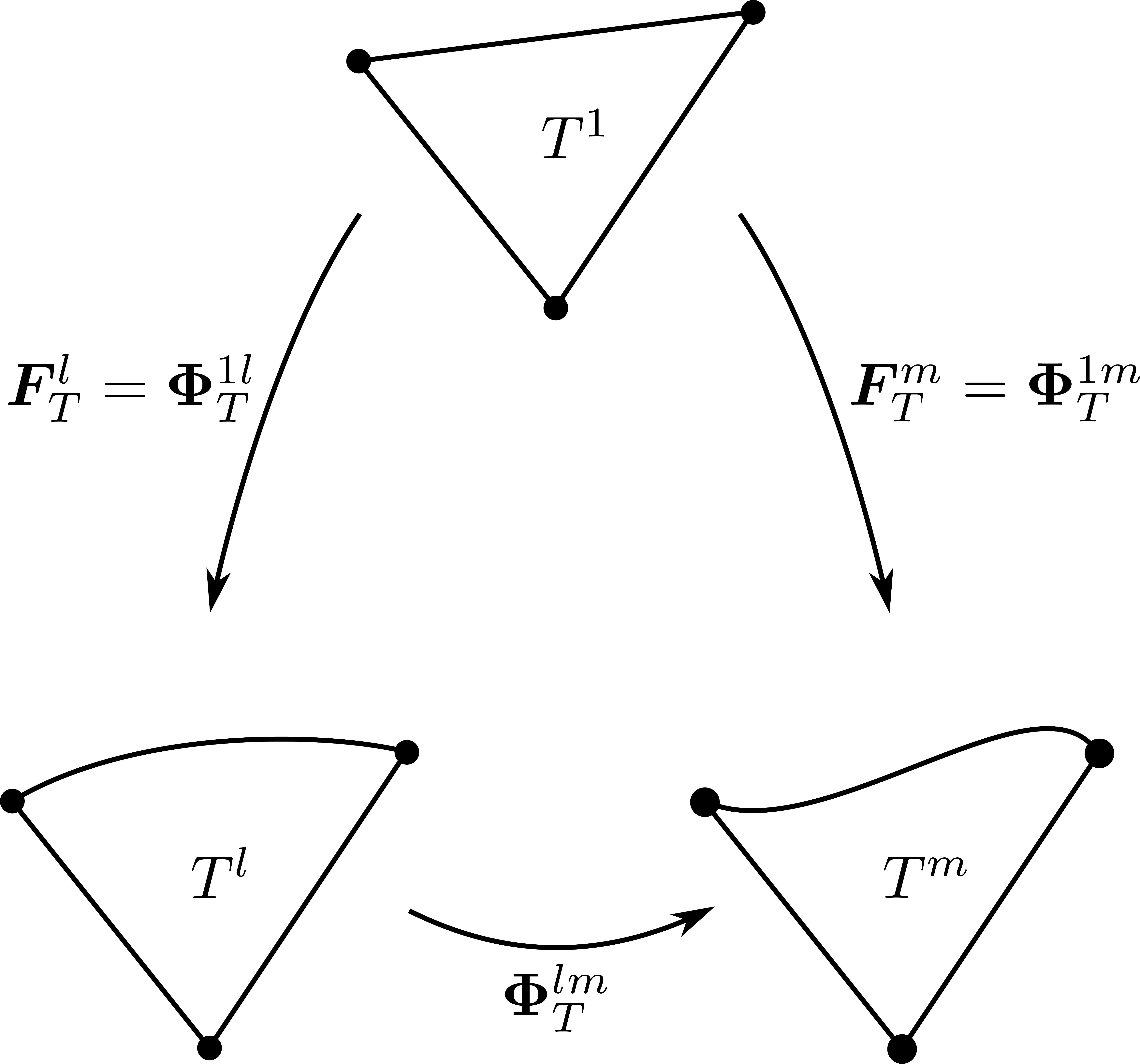}\qquad
\includegraphics[width=2.0in]{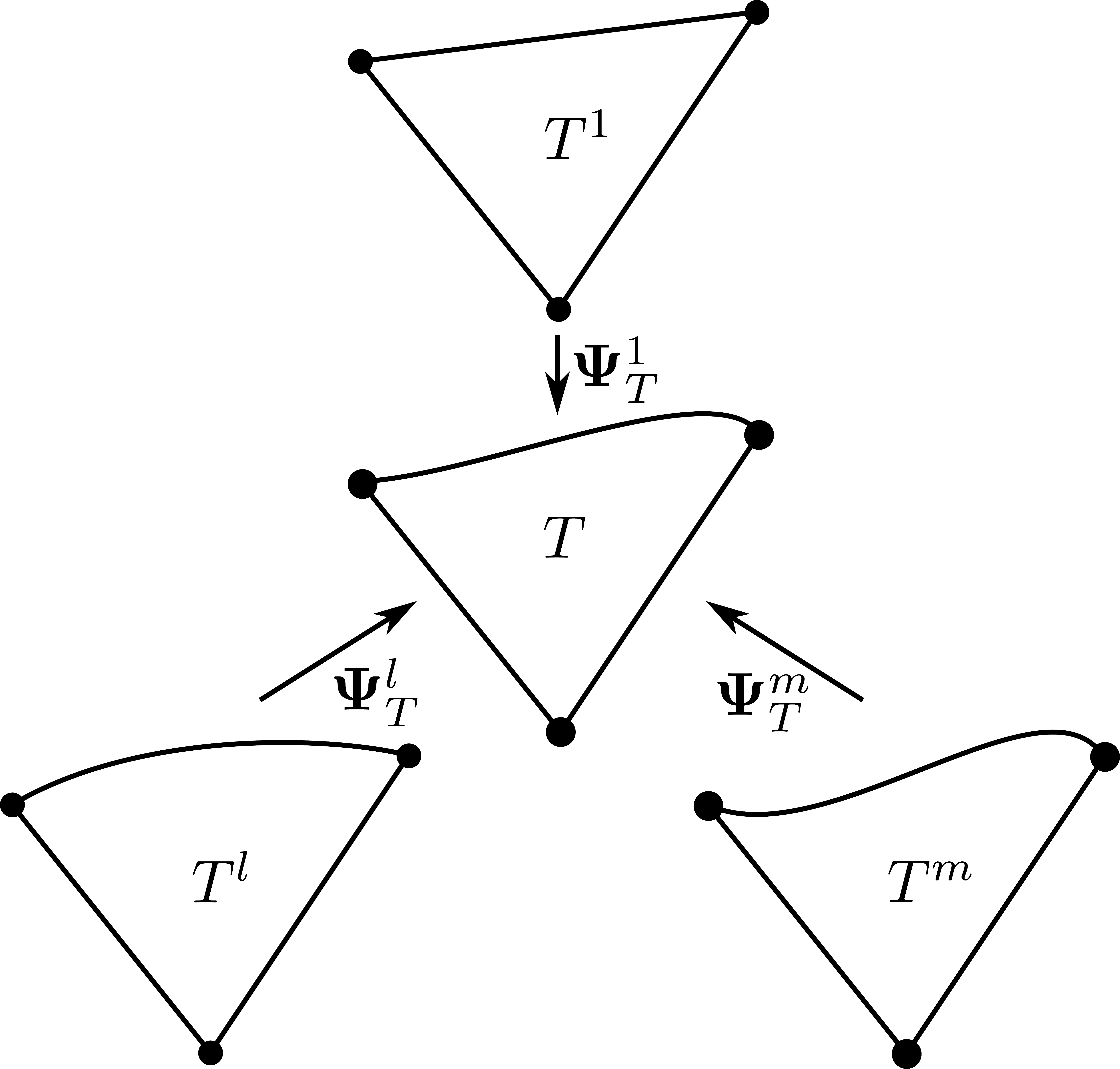}}
\caption{(a) Mappings between linear approximate triangles and approximate triangles of higher degree.  (b) Mappings between the approximate triangles and the exact curvilinear triangle.}
\label{fig:mappings}
\end{center}
\end{figure}

In order to compare the exact solution, defined on the exact domain $\Dom$, with
an approximation defined on the approximate domain $\Dom^{m}$, we require a map from the approximate domain to the true one. These can be defined element-wise in close analogy to $\MapT{T}^m$. Specifically, given a triangle $T^m\in\TkDom{h}{m}$ we define a map  $\bPsi_{T}^{m} : T^{m} \to \R^2$  which maps $T^m$ diffeomorphically onto a curvilinear triangle $T$ exactly fitting $\Dom$.  If $T^m$ has no boundary edges, the map is taken to be the identity.  Otherwise, $T^{m}$ has one edge $E^{m} \subset \BdyDom^{m}$, and the map is defined by \cite[eqn. (32)]{Lenoir_SJNA1986}. It restricts to the identity on the interior edges of $T^m$ and satisfies Propositions SM2.1 and SM2.2. 
The curvilinear triangulation $\TkDom{h}{} := \{ \bPsi_{T}^{m} (T^{m}) \}_{T^{m} \in \TkDom{h}{m}}$ then exactly triangulates $\Dom$. The $\bPsi_{T}^{m}$ may be pieced together to give a global map $\bPsi^{m} : \Dom^{m} \to \Dom$.

We may view the exact domain and the corresponding triangulation as the limiting case of the approximate domain, and its triangulation, as $m\to\infty$.  This leads to alternative notations
$\Dom^{\infty} \equiv \Dom$, $\TkDom{h}{\infty} \equiv \TkDom{h}{}$,  $\bPhi^{l\infty} \equiv \bPsi^{l}$, $\MapT{T}^{\infty} \equiv \bPsi^{1}$, etc., which will sometimes be convenient.  Note that the use of the superscript infinity in the notation for these quantities is suggestive: the exact domain can be thought of as an infinite order approximation of itself.  However, this is merely a choice of notation.  We are not asserting here some sort of convergence of the polynomial approximate domains to the true domain.

Section SM2 
gives further details on these maps, with the main results summarized in the next theorem (proved in
subsection SM2.3). 
\begin{theorem}\label{thm:curved_elem_map_estimates}
Assume \cref{hyp:one_side_on_bdy}.  Then for all $1 \leq l \leq m \leq k$ and $m=\infty$, the maps $\MapT{T}^{m}$, $\MapT{T}^{l}$ described above satisfy
\begin{equation}\label{eqn:curved_elem_map_estim}
\begin{split}
\|\grad{}^s (\MapT{T}^{l} - \ident{T^{1}}) \|_{L^{\infty}(T^{1})} &\leq C h^{2 - s}, ~\text{ for }~ s = 0,1,2, \\
\|\grad{}^s (\MapT{T}^{m}  - \MapT{T}^{l}) \|_{L^{\infty}(T^{1})} &\leq C h^{l + 1 - s}, ~\text{ for }~ 0 \leq s \leq l+1, \\
1 - C h \leq \| [\grad{} \MapT{T}^{l}]^{-1} \|_{L^{\infty}(T^{1})} &\leq 1 + Ch, \quad \| [\grad{} \MapT{T}^{l}]^{-1} - \eye \|_{L^{\infty}(T^{1})} \leq C h,
\end{split}
\end{equation}
and the map $\bPhi^{lm}$ satisfies the estimates
\begin{equation}\label{eqn:curved_elem_map_optim}
\begin{split}
\|\grad{}^s (\bPhi^{lm}_{T} - \ident{T^{l}}) \|_{L^{\infty}(T^{l})} &\leq C h^{l+1-s}, ~\text{ for }~ 0 \leq s \leq l+1, \\
\|\grad{}^s ((\bPhi^{lm}_{T})^{-1} - \ident{T^{m}}) \|_{L^{\infty}(T^{m})} &\leq C h^{l+1-s}, ~\text{ for }~ 0 \leq s \leq l+1,
\end{split}
\end{equation}
\begin{equation}\label{eqn:curved_elem_map_estimate_isolate}
\begin{split}
\left[ \grad{} (\bPhi^{lm} - \ident{T^{l}}) \right] \circ \MapT{}^{l} &= \grad{} (\MapT{}^{m} - \MapT{}^{l}) + O(h^{l + 1}), \\
\left[ \grad{}^2 (\bPhi^{lm} - \ident{T^{l}}) \cdot \tanb{\gamma} \right] \circ \MapT{}^{l} &= \grad{}^2 (\MapT{}^{m} - \MapT{}^{l}) \cdot \tanb{\gamma} + O(h^{l}), \text{ for } \gamma = 1, 2,
\end{split}
\end{equation}
where all constants depend on the piecewise $C^{k+1}$ norm of $\BdyDom$.
\end{theorem}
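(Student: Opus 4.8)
The plan is to reduce everything to the interpolation estimates of \cite[Thms.~1 and 2]{Lenoir_SJNA1986} for the polynomial maps $\MapT{T}^{m}$ (and to Propositions SM2.1 and SM2.2 for the exact-fitting maps $\bPsi^{m}_{T}$), and then to propagate these through the composition $\bPhi^{lm}_{T}=\MapT{T}^{m}\circ(\MapT{T}^{l})^{-1}$ with the chain rule. Estimates \cref{eqn:curved_elem_map_estim} are essentially a restatement of Lenoir's bounds: since $\MapT{T}^{1}\equiv\ident{T^{1}}$, both $\MapT{T}^{l}-\ident{T^{1}}$ and $\MapT{T}^{m}-\MapT{T}^{l}$ are differences of Lagrange interpolants (of degrees $l$ and $1$, resp.\ $m$ and $l$) of one and the same $C^{k+1}$ map, so the usual interpolation error estimates — scaled to the physical element of diameter $h$, with the coarser interpolant governing the rate — give the stated $O(h^{2-s})$ and $O(h^{l+1-s})$ bounds, while $\grad{}^{s}\MapT{T}^{l}\equiv 0$ for $s>l$ since $\MapT{T}^{l}$ is a polynomial of degree $l$. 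The inverse-Jacobian bound then follows from $\grad{}\MapT{T}^{l}=\eye+O(h)$ and a Neumann series, and the case $m=\infty$ is handled the same way, with Propositions SM2.1 and SM2.2 in place of the interpolation estimates.

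Next I would prove \cref{eqn:curved_elem_map_optim} from the elementary identities
\begin{gather*}
\bPhi^{lm}_{T}-\ident{T^{l}} = (\MapT{T}^{m}-\MapT{T}^{l})\circ(\MapT{T}^{l})^{-1}, \\
(\bPhi^{lm}_{T})^{-1}-\ident{T^{m}} = -(\MapT{T}^{m}-\MapT{T}^{l})\circ(\MapT{T}^{m})^{-1},
\end{gather*}
which hold because $\ident{T^{l}}=\MapT{T}^{l}\circ(\MapT{T}^{l})^{-1}$ and similarly for the inverse. Composition with a diffeomorphism preserves the $L^{\infty}$ norm, so the cases $s=0$ follow at once from \cref{eqn:curved_elem_map_estim}. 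For $1\le s\le l+1$ I would apply the higher-order chain rule (Fa\`{a} di Bruno): a generic term is $\grad{}^{j}(\MapT{T}^{m}-\MapT{T}^{l})$, evaluated at $(\MapT{T}^{l})^{-1}$, contracted against a product of derivatives of $(\MapT{T}^{l})^{-1}$ whose orders sum to $s$. Here $\grad{}^{j}(\MapT{T}^{m}-\MapT{T}^{l})=O(h^{l+1-j})$, the Jacobian of $(\MapT{T}^{l})^{-1}$ is $\eye+O(h)$, and its higher derivatives are controlled (by differentiating the inverse-function identity $\grad{}(F^{-1})\circ F=[\grad{}F]^{-1}$ and invoking \cref{eqn:curved_elem_map_estim}, using that $\MapT{T}^{l}$ is a polynomial of degree $l$), so a short count shows every such term is $O(h^{l+1-s})$, the ``all-first-order'' one being dominant; the inverse map is treated identically with $(\MapT{T}^{m})^{-1}$ in place of $(\MapT{T}^{l})^{-1}$.

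For the refined identities \cref{eqn:curved_elem_map_estimate_isolate}, I would differentiate $\bPhi^{lm}-\ident{T^{l}}=(\MapT{}^{m}-\MapT{}^{l})\circ(\MapT{}^{l})^{-1}$ and compose with $\MapT{}^{l}$, using $\grad{}[(\MapT{}^{l})^{-1}]\circ\MapT{}^{l}=[\grad{}\MapT{}^{l}]^{-1}$, to obtain
\begin{equation*}
[\grad{}(\bPhi^{lm}-\ident{T^{l}})]\circ\MapT{}^{l} = \grad{}(\MapT{}^{m}-\MapT{}^{l})\,[\grad{}\MapT{}^{l}]^{-1};
\end{equation*}
subtracting $\grad{}(\MapT{}^{m}-\MapT{}^{l})$ leaves a remainder of size $\|\grad{}(\MapT{}^{m}-\MapT{}^{l})\|_{L^{\infty}}\,\|[\grad{}\MapT{}^{l}]^{-1}-\eye\|_{L^{\infty}}=O(h^{l})\cdot O(h)=O(h^{l+1})$, which is the first line. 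Differentiating once more and composing with $\MapT{}^{l}$ yields two contributions: $\grad{}^{2}(\MapT{}^{m}-\MapT{}^{l})$ paired twice with $[\grad{}\MapT{}^{l}]^{-1}$, and $\grad{}(\MapT{}^{m}-\MapT{}^{l})$ contracted with $\grad{}^{2}[(\MapT{}^{l})^{-1}]\circ\MapT{}^{l}$. Using $[\grad{}\MapT{}^{l}]^{-1}=\eye+O(h)$, $\grad{}^{2}[(\MapT{}^{l})^{-1}]=O(1)$ (once more from $\grad{}(F^{-1})\circ F=[\grad{}F]^{-1}$ and \cref{eqn:curved_elem_map_estim}), and $\grad{}^{2}(\MapT{}^{m}-\MapT{}^{l})\cdot\tanb{\gamma}=O(h^{l-1})$, contracting with $\tanb{\gamma}$ leaves $\grad{}^{2}(\MapT{}^{m}-\MapT{}^{l})\cdot\tanb{\gamma}$ plus an error $O(h^{l-1})\cdot O(h)+O(h^{l})\cdot O(1)=O(h^{l})$, the second line of \cref{eqn:curved_elem_map_estimate_isolate}.

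\emph{The main obstacle} is the derivative bookkeeping for the composition $\bPhi^{lm}_{T}=\MapT{T}^{m}\circ(\MapT{T}^{l})^{-1}$: one must check that all derivatives up to order $l+1$ of the polynomial maps $\MapT{T}^{l}$, $\MapT{T}^{m}$ and, above all, of their inverses carry the correct powers of $h$, and that after assembly by the higher-order chain rule no term exceeds the stated order. For the refined identities \cref{eqn:curved_elem_map_estimate_isolate} one must additionally track the remainder at the precise order $O(h^{l+1})$ (resp.\ $O(h^{l})$), rather than merely ``lower order''. A secondary complication is treating $m=\infty$ on the same footing as finite $m$, which forces one to exploit the full piecewise $C^{k+1}$ regularity of $\BdyDom$ through the exact-fitting maps $\bPsi^{m}_{T}$.
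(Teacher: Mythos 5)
Your proposal is correct and follows essentially the same route as the paper's proof in the supplement: the bounds \cref{eqn:curved_elem_map_estim} are taken from \cite[Thms.~1 and 2]{Lenoir_SJNA1986} (telescoping consecutive differences, with Propositions SM2.1--SM2.2 covering $m=\infty$), and \cref{eqn:curved_elem_map_optim}--\cref{eqn:curved_elem_map_estimate_isolate} are obtained by writing $\bPhi^{lm}_{T}-\ident{T^{l}}=(\MapT{T}^{m}-\MapT{T}^{l})\circ(\MapT{T}^{l})^{-1}$ and pushing the chain rule through, exactly as you do; your derivative count for the Fa\`a di Bruno terms and the $O(h^{l+1})$ and $O(h^{l})$ remainders in \cref{eqn:curved_elem_map_estimate_isolate} checks out. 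The only (harmless) imprecision is calling the Lenoir maps ``Lagrange interpolants of one and the same map''---they are not literally that (which is why \cref{ass:HHJ_Lag_interp_optimal_map} is introduced later as a modification)---but the bounds you need are exactly what Lenoir's theorems supply.
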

Analyzing the geometric error of the HHJ mixed formulation is delicate (recall \cref{sec:intro}).  Indeed, the identity \cref{eqn:curved_elem_map_estimate_isolate} will play an important role.

We close with a basic result relating norms on different order approximations of the same domain.  
The following result extends \cite[Thm 4.3.4]{Ciarlet_Book2002} to the mesh dependent norms in \cref{sec:skeleton_spaces}, and is proved in subsection SM2.4. 
\begin{proposition}\label{prop:equiv_norms}
Assume the hypothesis of \cref{prop:scalar_H2_h_norm_trace_Poincare}.
Let $v \in H^2_h(\Dom^{m})$ and define $\hat{v} = v \circ \bPhi \in H^2_h(\Dom^{l})$, $\bPhi |_{T} := \bPhi_{T}^{lm}$ for any choice of $l, m \in \{ 1, 2, ..., k, \infty \}$.  Let $\| v \|_{2,h,m}$, $\| v \|_{0,h,m}$ denote the norms in \cref{eqn:scalar_H2_h_norm}, \cref{eqn:scalar_L2_mesh_dependent_norm_abstract} defined on $\Dom^{m}$.  Then, $\| \hess{} v \|_{L^2(\TkDom{h}{m})} \leq C \left( \| \hess{} \hat{v} \|_{L^2(\TkDom{h}{l})} + h^{l-1} \| \grad{} \hat{v} \|_{L^2(\TkDom{h}{l})} \right)$, and
\begin{equation}\label{eqn:equiv_norms}
\| v \|_{2,h,m} \leq C \left( \| \hat{v} \|_{2,h,l} + h^{l-1} \| \grad{} \hat{v} \|_{L^2(\Dom^{l})} \right), \quad \| v \|_{0,h,m} \approx C \| \hat{v} \|_{0,h,l},
\end{equation}
\begin{equation}\label{eqn:equiv_norms_zero_bdy}
	\| v \|_{2,h,m} \approx C \| \hat{v} \|_{2,h,l}, ~ \text{ if } v \in H^2_h(\Dom^{m}) \cap \zerobdy{H}^1 (\Dom^{m}),
\end{equation}
for some constant $C > 0$ depending on the domain, where we modify the norm subscript to indicate the order of the domain.
\end{proposition}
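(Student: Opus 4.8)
The plan is to transplant everything to $\Dom^{l}$ element by element and edge by edge through the local maps $\bPhi^{lm}_{T}$, controlling the geometric terms via \cref{thm:curved_elem_map_estimates}. Write $\bJ := \grad{}\bPhi^{lm}_{T}$ on $T^{l}$. By \cref{eqn:curved_elem_map_optim} with $s=0,1$ we have $\|\bJ - \eye\|_{L^{\infty}(T^{l})} \leq Ch^{l}$, so $\bJ$, $\bJ^{-1}$, and $\det\bJ$ are bounded above and bounded away from zero uniformly in $h$ and $T$; and with $s=2$, since $\ident{T^{l}}$ has vanishing Hessian, $\|\grad{}^{2}\bPhi^{lm}_{T}\|_{L^{\infty}(T^{l})}\leq Ch^{l-1}$. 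Consequently the matrices $\bGamma^{\gamma}$ appearing in the Hessian transformation
\begin{equation*}
(\hess{} v)\circ\bPhi^{lm}_{T} = \bJ^{-1}\bigl[\hess{}\hat{v} - \pd{\gamma}\hat{v}\,\bGamma^{\gamma}\bigr]\bJ\invtp, \qquad \gamma = 1,2,
\end{equation*}
whose entries are built from $\bJ^{-1}$ and $\grad{}^{2}\bPhi^{lm}_{T}$, satisfy $\|\bGamma^{\gamma}\|_{L^{\infty}(T^{l})}\leq Ch^{l-1}$. Changing variables from $T^{m}$ to $T^{l}$ (picking up the bounded factor $|\det\bJ|$) and using these bounds gives, pointwise on $T^{l}$, $|(\hess{} v)\circ\bPhi^{lm}_{T}|^{2} \leq C(|\hess{}\hat{v}|^{2} + h^{2(l-1)}|\grad{}\hat{v}|^{2})$; summing over $T$ yields $\|\hess{} v\|_{L^{2}(\TkDom{h}{m})} \leq C(\|\hess{}\hat{v}\|_{L^{2}(\TkDom{h}{l})} + h^{l-1}\|\grad{}\hat{v}\|_{L^{2}(\TkDom{h}{l})})$, the first claimed inequality.

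For the $\|\cdot\|_{0,h}$ equivalence no derivatives of $v$ enter: changing variables on $\Dom^{m}$ and on each edge $E\in\EkDom{h}{m}$ --- the area and arc-length Jacobians are bounded above and below by the same estimates, curved boundary edges included --- shows $\|v\|_{L^{2}(\Dom^{m})} \approx \|\hat{v}\|_{L^{2}(\Dom^{l})}$ and $\|v\|_{L^{2}(\EkDom{h}{m})} \approx \|\hat{v}\|_{L^{2}(\EkDom{h}{l})}$, hence $\|v\|_{0,h,m}\approx C\|\hat{v}\|_{0,h,l}$, in both directions.

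For the full semi-norm $\|\cdot\|_{2,h}$ I must also control the jump terms. On an interior edge $E\in\EkIntDom{h}{m}$ --- straight, with unit normal $\bcn$ --- property (iii) of the curved-element maps (which gives $\bPhi^{lm}_{T}|_{E}=\ident{}|_{E}$) shows that $E$ and $\bcn$ are also an edge and a normal of $\TkDom{h}{l}$, and that on $E$ one has $\bcn\cdot\grad{} v = (\bJ^{-1}\bcn)\cdot\grad{}\hat{v} = \bcn\cdot\grad{}\hat{v} + R$ with $|R|\leq Ch^{l}|\grad{}\hat{v}|$; on $\Bclamped$ the same holds, using in addition the change of variables on the (possibly curved) boundary edges and the $O(h^{l})$ discrepancy between the normals of $\BdyDom^{m}$ and $\BdyDom^{l}$. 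Taking jumps and applying the scaling/trace estimate \cref{eqn:scaling_trace_estimate} to $\grad{}\hat{v}$, one finds $h^{-1}\|R\|_{L^{2}(E)}^{2}\leq Ch^{2l-1}\|\grad{}\hat{v}\|_{L^{2}(E)}^{2}\leq Ch^{2(l-1)}(\|\grad{}\hat{v}\|_{L^{2}(T^{l})}^{2} + \|\hess{}\hat{v}\|_{L^{2}(T^{l})}^{2})$; summing and combining with the Hessian bound gives the first inequality in \cref{eqn:equiv_norms}.

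Finally, if $v\in H^{2}_{h}(\Dom^{m})\cap\zerobdy{H}^{1}(\Dom^{m})$ then, since $\bPhi^{lm}$ carries $\BdyDom^{l}$ onto $\BdyDom^{m}$, $\hat{v}\in\zerobdy{H}^{1}(\Dom^{l})$, so \cref{prop:scalar_H2_h_norm_trace_Poincare} applied on $\Dom^{l}$ gives $\|\grad{}\hat{v}\|_{L^{2}(\Dom^{l})}\leq\meshpoincare\trinorm{\hat{v}}_{2,h,l}\leq\meshpoincare\|\hat{v}\|_{2,h,l}$; hence the extra term in \cref{eqn:equiv_norms} satisfies $h^{l-1}\|\grad{}\hat{v}\|_{L^{2}(\Dom^{l})}\leq Ch^{l-1}\|\hat{v}\|_{2,h,l}$ and is absorbed, so $\|v\|_{2,h,m}\leq C\|\hat{v}\|_{2,h,l}$. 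The reverse inequality follows by interchanging $l$ and $m$, writing $v=\hat{v}\circ(\bPhi^{lm}_{T})^{-1}$ with $(\bPhi^{lm}_{T})^{-1}$ obeying the same bounds by the second line of \cref{eqn:curved_elem_map_optim} and $v\in\zerobdy{H}^{1}(\Dom^{m})$; this establishes \cref{eqn:equiv_norms_zero_bdy}. The main obstacle throughout is the jump terms: because they carry the weight $h^{-1}$ one must squeeze an extra power of $h$ out of the perturbation $\bJ^{-1}\bcn - \bcn = O(h^{l})$ and pair it with a trace inequality, and --- more essentially --- the second-derivative term $\pd{\gamma}\hat{v}\,\bGamma^{\gamma}$ is only of size $O(h^{l-1})$, which is precisely why the additional $h^{l-1}\|\grad{}\hat{v}\|_{L^{2}}$ term is unavoidable in \cref{eqn:equiv_norms} in general and disappears only for data vanishing on the boundary; one must also keep in mind that $\bPhi^{lm}$ is globally only continuous, so every estimate is performed triangle- and edge-wise and then reconciled across interior edges, where the maps reduce to the identity.
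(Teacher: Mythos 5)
Your proof is correct and follows essentially the same route the paper takes (the proof is relegated to the supplement, but the mechanism the paper itself flags in the introduction is exactly yours): transform element by element via $\bPhi^{lm}_T$, use the Hessian identity with Christoffel terms of size $O(h^{l-1})$ from \cref{eqn:curved_elem_map_optim} to get the extra $h^{l-1}\|\grad{}\hat v\|$ contribution, handle the $h^{-1}$-weighted jump terms by pairing the $O(h^{l})$ perturbation of $\bJ^{-1}\bcn$ with the trace estimate \cref{eqn:scaling_trace_estimate}, and absorb the gradient term via \cref{prop:scalar_H2_h_norm_trace_Poincare} when $v$ vanishes on the boundary. No gaps.
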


\subsection{Curved Lagrange Spaces}

Let $r$ be a positive integer and $m$ a positive integer or $\infty$. The (continuous) Lagrange finite element space of degree $r$ is defined on $\Dom^{m}$ via the mapping $\MapT{T}^m$:
\begin{equation}\label{eqn:std_CG_Lagrange_FE}
	\LAG_h^{m} \equiv \LAG_h^{m} (\Dom^{m}) := \{ v \in \zerobdy{H}^1 (\Dom^{m}) \mid v |_{T} \circ \MapT{T}^{m} \in \Pk_{r+1}(T^{1}), ~ \forall T \in \TkDom{h}{m} \}.
\end{equation}
For the case $m=\infty$ (the exact domain) we simply write $\LAG_h$.

If $v\in H^{2}_{h}(\Dom^{1})$, then, on each triangle $T^{1}$, $v$ is in $H^2$ and hence continuous up to the boundary of $T^{1}$.  Globally $v \in H^1(\Dom^1)$, and so has a well-defined trace on each edge.   
Consequently $v$ is continuous on $\CL{\Dom^{1}}$ and we can define the Lagrange interpolation operator $\Ilag_{h}^{1} : H^{2}_{h}(\Dom^{1}) \to \LAG_{h}^{1}$ \cite{Babuska_MC1980} defined on each element $T^{1} \in \TkDom{h}{1}$ by
\begin{equation}\label{eqn:scalar_space_interp_oper}
\begin{split}
(\Ilag_{h}^{1} v) (\uu) - v(\uu) &= 0, \quad \forall \text{ vertices } \uu \text{ of } T^{1}, \\
\int_{E^{1}} (\Ilag_{h}^{1} v - v) q \, \darc &= 0, \quad \forall q \in \Pk_{r-1}(E^{1}), ~\forall E^{1} \in \partial T^{1}, \\
\int_{T^{1}} (\Ilag_{h}^{1} v - v) q \, \darea &= 0, \quad \forall q \in \Pk_{r-2}(T^{1}).
\end{split}
\end{equation}
Then, given $v \in H^{2}_{h}(\Dom^{m})$, we define the global interpolation operator, $\Ilag_{h}^{m} : H^{2}_{h}(\Dom^{m}) \to \LAG_{h}^{m}$, element-wise through $\Ilag_{h}^{m} v \big{|}_{T^{m}} \circ \MapT{T}^{m} := \Ilag_{h}^{1} (v \circ \MapT{T}^{m})$.  Note that $v \circ \MapT{}^{m} \in C^{0}(\Dom^{1})$ because $v \in C^{0}(\Dom^{m})$ and $\MapT{}^{m}$ is continuous over $\Dom^{1}$.  Approximation results for $\Ilag_{h}^{m}$ are given in subsection SM3.2. 
We also denote $\Ilag_{h}^{m,s}$ to be the above Lagrange interpolant on $\Dom^{m}$ onto continuous piecewise polynomials of degree $s$.  Thus, $\Ilag_{h}^{m,r+1} \equiv \Ilag_{h}^{m}$.

\section{The HHJ Method}\label{sec:manifold_HHJ}

We start with a space of tensor-valued functions, defined on curved domains, with special continuity properties, followed by a transformation rule for the forms in \cref{eqn:discrete_b_form,eqn:discrete_a_form}.  Next, we state the finite element approximation spaces for \cref{eqn:Kirchhoff_manifold_skeleton_mixed}, which conform to $H^0_{h} (\Dom^{m};\symmat)$ and $H^2_{h} (\Dom^{m})$, and define interpolation operators for these spaces while accounting for the effect of curved elements (recall that $1 \leq m \leq k$ or $m=\infty$).

\subsection{A Tensor Valued Space on Curved Domains}

For $p > 3/2$, let
\begin{equation}\label{eqn:HHJ_decomp_hessian_space}
\begin{split}
\Mnn^{m} (\Dom^{m}) := \{ \bvarphi \in L^2 (\Dom^{m};\symmat)  \mid \bvarphi |_{T^{m}}
   \in &W^{1,p}(T^{m};\symmat)\ \forall T^{m} \in \TkDom{h}{m},
   \\
   &\text{$\bvarphi$ normal-normal continuous}\}.
\end{split}
\end{equation}
Note that $\Mnn^{m} (\Dom^{m}) \subset H^0_{h} (\Dom^{m};\symmat)$ with $\varphinn \equiv \bcn\tp \bvarphiT \bcn$ on each mesh edge.

\begin{remark}\label{rem:assume_p_3/2}
The assumption that $p > 3/2$ is a technical simplification to ensure that the trace of a function in $\Mnn^{m} (\Dom^{m})$ onto the mesh skeleton $\EkDom{h}{m}$ is in $L^2(\EkDom{h}{m})$.
\end{remark}

In order to map between $\Mnn^{m} (\Dom^{m})$ and $\Mnn^{l} (\Dom^{l})$ (with $m \neq l$) such that normal-normal continuity is preserved, we need the following transformation rule.
\begin{definition}[Matrix Piola Transform]\label{defn:matrix_piola_transforms}
Let $\MapT{} : \widehat{\cD} \to \cD$ be an orientation-preserving diffeomorphism between domains in $\R^2$. Given $\bvarphi:\cD\to\symmat$, we define  its \emph{matrix Piola transform} $\hat{\bvarphi}:\widehat{\cD}\to \symmat$ by
\begin{equation}\label{eqn:matrix_Piola_transf_contra}
\hat{\bvarphi}(\hat{\vx}) = (\det \Jac{})^2 \Jac{}^{-1} \bvarphi(\vx)\Jac{}^{-T}
\end{equation}
where $\vx = \MapT{}(\hat{\vx})$, and $\Jac{} = \Jac{}(\hat{\vx}) = \grad{} \MapT{}(\hat{\vx})$.
\end{definition}
Note that \cref{eqn:matrix_Piola_transf_contra} is analogous to the Piola transform for $\Hdiv{\Dom}$ functions.

By elementary arguments, see (SM4.5), 
we find that
\begin{equation}\label{eqn:matrix_Piola_transf_contra_nn}
	\varphinn \circ \MapT{} = \hatvarphinn \, |(\grad{} \MapT{})\hat{\btv}|^{-2}.
\end{equation}
We shall apply the transform when the diffeomorphism is $\MapT{}^m$, which is piecewise smooth and continuous with respect to the mesh.  It follows that $(\grad{} \MapT{})\hat{\btv}$ is single-valued at interelement edges, so $\bvarphi$ is normal-normal continuous if and only if $\hat{\bvarphi}$ is.

We close with the following norm equivalences (see (SM4.7) and (SM4.8))
\begin{equation}\label{eqn:HHJ_tensor_norm_equiv}
   \| \bvarphi \|_{0,h,m} \approx \| \bvarphi \|_{L^2(\Dom^{m})}, ~ \forall \, \bvarphi \in \HHJ_{h}^{m}, \quad \| \bvarphi \|_{0,h,m} \approx \| \hat{\bvarphi} \|_{0,h,l}, ~ \forall \, \bvarphi \in H^0_{h}(\Dom^{m};\symmat),
\end{equation}
for all $1 \leq l,m \leq k, \infty$.

\subsection{Mapping Forms}\label{sec:mapping_forms}

The following result is crucial for analyzing the geometric error when approximating the solution on an approximate domain and also for deriving the discrete inf-sup condition on curved elements.  We define $\strip = \bigcup_{T \in \TkBdyDom{h}{}} T$ for the ``strip'' domain contained in $\Dom$.  In addition, we generalize the definitions \cref{eqn:discrete_a_form} and \cref{eqn:discrete_b_form} of the bilinear forms $\afman{}{\cdot}{\cdot}$ and $\bfman{}{\cdot}{\cdot}$ to include a superscript $m$ to indicate that they are defined on the approximate domain $\Dom^{m}$.
\begin{theorem}\label{thm:mapping_forms}
Let $1 \leq l \leq k$ such that $m > l$, recall $\bPhi \equiv \bPhi^{lm} : \Dom^{l} \to \Dom^{m}$ from \cref{eqn:inter_curved_elem_map}, for $1 < m \leq k$, and $m=\infty$, and set $\bJ := \grad{} \bPhi$. 
For all $\bsigma, \bvarphi \in \Mnn^{m}(\Dom^{m})$ and $v \in H^2_{h}(\Dom^{m})$, there holds
\begin{equation}\label{eqn:mapping_a_form}
\begin{split}
\afman{m}{\bsigma}{\bvarphi} &= \afman{l}{\hat{\bsigma}}{\hat{\bvarphi}} + O(h^{l}) \| \hat{\bsigma} \|_{L^2(\strip^{l})} \| \hat{\bvarphi} \|_{L^2(\strip^{l})},
\end{split}
\end{equation}
\begin{equation}\label{eqn:mapping_b_h_form}
\begin{split}
\bfman{m}{\bvarphi}{v} &= \bfman{l}{\hat{\bvarphi}}{\hat{v}} + \sum_{T^{l} \in \TkBdyDom{h}{l}} \duality{\hatvarphinn}{\hat{\bcn} \cdot \grad{} \left[ (\ident{T^{l}} - \bPhi_{T}) \cdot \CProj \grad{} \hat{v} \right]}{\partial T^{l}} \\
+ O(h^{l}&) \| \hat{\bvarphi} \|_{0,h,l} \| \grad{} \hat{v} \|_{H^1(\TkBdyDom{h}{l})} - \sum_{T^{l} \in \TkBdyDom{h}{l}} \inner{\hat{\bvarphi}}{\hess{} [(\ident{T^{l}} - \bPhi_{T}) \cdot \CProj \grad{} \hat{v}]}{T^{l}} \\
+ O(h^{l}&) \sum_{E^{l} \in \EkBdyDom{h}{l}} \| \hatvarphinn \|_{L^2(E^{l})} \| \grad{} \Ilag_{h}^{l,1} \hat{v} \|_{L^2(E^{l})},
\end{split}
\end{equation}
where $\bsigma, \bvarphi$ and $\hat{\bsigma}, \hat{\bvarphi}$ are related by the matrix Piola transform \cref{eqn:matrix_Piola_transf_contra} involving $\bPhi_{T}$, and $v |_{T} \circ \bPhi_{T} = \hat{v}$, $\Ilag_{h}^{l,1}$ is the Lagrange interpolation operator onto piecewise linears on $\Dom^{l}$, and $\CProj : L^2(\Dom^{l}) \to L^2(\Dom^{l})$ is the projection onto piecewise constants.
\end{theorem}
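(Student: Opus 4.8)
The plan is to localize everything to the boundary strip. By construction $\bPhi = \bPhi^{lm}$ is the identity (with $\bJ = \eye$, $\hat{\bsigma}=\bsigma$, $\hat{\bvarphi}=\bvarphi$, $\hat{v}=v$) on every triangle having no edge on $\BdyDom^{l}$, so all discrepancies between a form on $\Dom^{m}$ and its pull-back are supported on $\strip^{l} := \bigcup_{T^{l}\in\TkBdyDom{h}{l}} T^{l}$, where \cref{thm:curved_elem_map_estimates} gives $\|\bJ - \eye\|_{L^{\infty}} = O(h^{l})$, $\|\det\bJ - 1\|_{L^{\infty}} = O(h^{l})$, and $\|\grad{}^{2}\bPhi_{T}\|_{L^{\infty}} = O(h^{l-1})$. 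On each $T^{l}$ I would change variables, $\int_{T^{m}} f = \int_{T^{l}}(f\circ\bPhi_{T})\det\bJ$, insert the matrix Piola identities $\bsigma\circ\bPhi_{T} = (\det\bJ)^{-2}\bJ\hat{\bsigma}\bJ\tp$ and $\bvarphi\circ\bPhi_{T} = (\det\bJ)^{-2}\bJ\hat{\bvarphi}\bJ\tp$ from \cref{eqn:matrix_Piola_transf_contra}, and use the Frobenius identity $(\bJ\bA\bJ\tp):(\bJ\invtp\bB\bJ^{-1}) = \bA:\bB$; at the end I would trade norms on $\Dom^{m}$ for norms on $\Dom^{l}$ via \cref{eqn:HHJ_tensor_norm_equiv}. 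For \cref{eqn:mapping_a_form} this is immediate: the integrand of $\afman{m}{\bsigma}{\bvarphi}$ becomes $(\det\bJ)^{-3}(\bJ\hat{\bsigma}\bJ\tp):\binvelasten(\bJ\hat{\bvarphi}\bJ\tp)$, which coincides with the integrand of $\afman{l}{\hat{\bsigma}}{\hat{\bvarphi}}$ when $\bJ = \eye$ and otherwise differs from it by terms each carrying a factor $\bJ\tp\bJ - \eye$ or $(\det\bJ)^{-3} - 1$ (these appear because $\binvelasten$ does not commute with conjugation by $\bJ$ and $\tr(\bJ\hat{\bvarphi}\bJ\tp) = \hat{\bvarphi}:\bJ\tp\bJ$); summing over $\TkBdyDom{h}{l}$ and Cauchy--Schwarz give the stated $O(h^{l})$ remainder.

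The substance is \cref{eqn:mapping_b_h_form}. For the volume term, the chain rule gives the Hessian transformation with a Christoffel correction (cf.\ the introduction), and after the change of variables and the Frobenius identity one gets, on each $T^{l}$,
\begin{equation*}
\begin{split}
-\inner{\bvarphi}{\hess{} v}{T^{m}} &= -\inner{\hat{\bvarphi}}{\hess{} \hat{v}}{T^{l}} + \inner{\bigl(1-(\det\bJ)^{-1}\bigr)\hat{\bvarphi}}{\hess{} \hat{v}}{T^{l}} \\
&\quad + \inner{(\det\bJ)^{-1}\hat{\bvarphi}}{\pd{\gamma}\hat{v}\,\bGamma^{\gamma}}{T^{l}}.
\end{split}
\end{equation*}
The middle term is a genuine $O(h^{l})\|\hat{\bvarphi}\|_{L^{2}(T^{l})}\|\hess{}\hat{v}\|_{L^{2}(T^{l})}$ remainder. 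The last term is delicate: since $\pd{\gamma}\hat{v}\,\bGamma^{\gamma} = O(h^{l-1})|\grad{}\hat{v}|$, a naive bound loses a power of $h$. I would instead use the exact identity $\pd{\gamma}\hat{v}\,\bGamma^{\gamma} = \grad{}^{2}\bPhi_{T}\cdot\bigl((\grad{} v)\circ\bPhi_{T}\bigr)$, write $(\grad{} v)\circ\bPhi_{T} = \CProj\grad{}\hat{v} + (\bJ\invtp - \eye)\grad{}\hat{v} + (\grad{}\hat{v} - \CProj\grad{}\hat{v})$, where the last two summands are $O(h^{l})|\grad{}\hat{v}|$ and (in $L^{2}(T^{l})$) $O(h)|\grad{}^{2}\hat{v}|$ respectively --- hence contribute only $O(h^{l})$ against the $O(h^{l-1})$ factor $\grad{}^{2}\bPhi_{T}$ --- and finally note that, because $\CProj\grad{}\hat{v}$ is constant on $T^{l}$ and $\hess{}\ident{T^{l}} = 0$, $\grad{}^{2}\bPhi_{T}\cdot\bigl(\CProj\grad{}\hat{v}\bigr) = -\hess{}\bigl[(\ident{T^{l}}-\bPhi_{T})\cdot\CProj\grad{}\hat{v}\bigr]$. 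Summed over $\TkBdyDom{h}{l}$, this reproduces the Hessian part $-\inner{\hat{\bvarphiT}}{\hess{}\hat{v}}{\TkDom{h}{l}}$ of $\bfman{l}{\hat{\bvarphi}}{\hat{v}}$, the volume correction term of \cref{eqn:mapping_b_h_form}, and an $O(h^{l})\|\hat{\bvarphi}\|_{0,h,l}\|\grad{}\hat{v}\|_{H^{1}(\TkBdyDom{h}{l})}$ remainder.

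For the edge term, only edges in $\EkBdyDom{h}{l}$ and interior edges of boundary triangles see a change. On such an edge I would use $d\arcl^{m} = |\bJ\hat{\btv}|\,d\arcl^{l}$, $\varphinn\circ\bPhi_{T} = \hatvarphinn|\bJ\hat{\btv}|^{-2}$ from \cref{eqn:matrix_Piola_transf_contra_nn}, $(\grad{} v)\circ\bPhi_{T} = \bJ\invtp\grad{}\hat{v}$, and the $2\times 2$ identity $\bJ\tp R\bJ = (\det\bJ)R$ ($R$ = rotation by $\pi/2$) relating the normal of $E^{m}$ to $\hat{\bcn}$; together these give $(\bcn\cdot\grad{} v)\circ\bPhi_{T} = (\det\bJ)|\bJ\hat{\btv}|^{-1}\hat{\bcn}\tp(\bJ\tp\bJ)^{-1}\grad{}\hat{v}$, so the pulled-back edge integrand is $\hatvarphinn\,\hat{\bcn}\tp\bigl[(\det\bJ)|\bJ\hat{\btv}|^{-2}(\bJ\tp\bJ)^{-1}\bigr]\grad{}\hat{v}$. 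Expanding in $\bJ - \eye = O(h^{l})$, the zeroth order term reproduces the edge part of $\bfman{l}{\hat{\bvarphi}}{\hat{v}}$, and the first order term splits --- using $(\bJ-\eye)\hat{\btv} = \bzero$ on interior edges of boundary triangles (where $\bPhi_{T}$ fixes the edge) --- into (i) the part $-\hat{\bcn}\tp(\bJ - \eye)\tp\grad{}\hat{v}$, which after replacing $\grad{}\hat{v}$ by $\CProj\grad{}\hat{v}$ is precisely $\hat{\bcn}\cdot\grad{}[(\ident{T^{l}}-\bPhi_{T})\cdot\CProj\grad{}\hat{v}]$ and hence, summed over $\partial T^{l}$, the boundary term of \cref{eqn:mapping_b_h_form}; and (ii) a remainder which, after a short computation in the frame $\{\hat{\btv},\hat{\bcn}\}$, telescopes to zero across interior edges and on boundary edges reduces to terms of the form $O(h^{l})\,\hatvarphinn\,\grad{}\hat{v}|_{E^{l}}$. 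Writing $\grad{}\hat{v}|_{E^{l}} = \grad{}\Ilag_{h}^{l,1}\hat{v}|_{E^{l}} + \grad{}(\hat{v}-\Ilag_{h}^{l,1}\hat{v})|_{E^{l}}$ and bounding the interpolation-error part by the scaling/trace estimate \cref{eqn:scaling_trace_estimate} then yields the $O(h^{l})\sum_{E^{l}\in\EkBdyDom{h}{l}}\|\hatvarphinn\|_{L^{2}(E^{l})}\|\grad{}\Ilag_{h}^{l,1}\hat{v}\|_{L^{2}(E^{l})}$ term and absorbs the rest into the $O(h^{l})\|\hat{\bvarphi}\|_{0,h,l}\|\grad{}\hat{v}\|_{H^{1}(\TkBdyDom{h}{l})}$ term; collecting the volume and edge identities gives \cref{eqn:mapping_b_h_form}.

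The main obstacle is the Christoffel term: its $L^{\infty}$ size is only $O(h^{l-1})$, so a direct estimate would give $O(h^{0})$ when $m = l+1$ and destroy convergence. The resolution is the commutator-type identity $\grad{}^{2}\bPhi_{T}\cdot\CProj\grad{}\hat{v} = -\hess{}[(\ident{T^{l}}-\bPhi_{T})\cdot\CProj\grad{}\hat{v}]$, valid only because $\CProj\grad{}\hat{v}$ is elementwise constant, which transfers the offending second derivative off $\bPhi_{T}$. The second, bookkeeping-heavy, hurdle is disentangling in the edge term the first-order pieces that are tangential (and so cancel in the jump or vanish on interior edges) from those that must be charged to the explicit boundary and interpolation-error terms; it is here that the identities \cref{eqn:curved_elem_map_estimate_isolate} relating $\grad{}^{s}\bPhi^{lm}$ to $\grad{}^{s}(\MapT{}^{m}-\MapT{}^{l})$ let all constants be controlled by the piecewise $C^{k+1}$ norm of $\BdyDom$.
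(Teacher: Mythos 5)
Your proposal is correct and follows essentially the same route as the paper's proof: localize to the boundary strip, pull back via the matrix Piola transform and change of variables, expand in $\bJ - \eye = O(h^{l})$, neutralize the $O(h^{l-1})$ Christoffel/second-derivative term by inserting the piecewise-constant projection $\CProj\grad{}\hat{v}$ and rewriting $\grad{}^{2}\bPhi_{T}\cdot\CProj\grad{}\hat{v}$ as $-\hess{}[(\ident{T^{l}}-\bPhi_{T})\cdot\CProj\grad{}\hat{v}]$, and handle the edge terms with the $\varphinn$ transformation rule, the $\CProj$ insertion, and the split $\grad{}\hat{v} = \grad{}\Ilag_{h}^{l,1}\hat{v} + \grad{}(\hat{v}-\Ilag_{h}^{l,1}\hat{v})$ controlled by the scaling/trace estimate. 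The only differences are cosmetic bookkeeping (e.g., whether the Hessian correction is expressed through $(\grad{}v)\circ\bPhi_{T}$ or through the inverse metric, which agree to $O(h^{l})$).
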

\begin{proof}
To derive \cref{eqn:mapping_a_form}, we use \cref{eqn:matrix_Piola_transf_contra} to obtain
\begin{equation}\label{eqn:a_form_mapped_exact_to_discrete}
\begin{split}
\afman{m}{\bsigma}{\bvarphi} &= \inner{\varphi^{\gamma \omega}}{\invelasten{\gamma}{\omega}{\alpha}{\beta} \sigma^{\alpha \beta}}{\Dom^{m}} = \sum_{T^{l} \in \TkDom{h}{l}} \inner{(\det \bJ_{T})^{-1} \hat{\varphi}^{\gamma \omega}}{\hatinvelasten{\gamma}{\omega}{\alpha}{\beta} \hat{\sigma}^{\alpha \beta}}{T^{l}},
\end{split}
\end{equation}
where $[\binvelasten]_{\gamma \omega \alpha \beta} \equiv \invelasten{\gamma}{\omega}{\alpha}{\beta} = 1/(2 \shearmod) \kron_{\gamma \alpha} \kron_{\omega \beta} - (\poisson/\youngmod) \kron_{\gamma \omega} \kron_{\alpha \beta}$, with $\kron_{\alpha \beta}$ being the Kronecker delta, $\hatinvelasten{\gamma}{\omega}{\alpha}{\beta} = 1/(2 \shearmod) \met{\gamma \alpha} \met{\omega \beta} - (\poisson/\youngmod) \met{\gamma \omega} \met{\alpha \beta}$, $\metmat := \bJ \tp \bJ$ is the induced metric, and $[\metmat]_{\alpha \beta} \equiv \met{\alpha \beta}$.  The result follows by adding and subtracting terms, noting that $\| \invelasten{\gamma}{\omega}{\alpha}{\beta} - \hatinvelasten{\gamma}{\omega}{\alpha}{\beta}\|_{L^{\infty}(\Dom^{l})} \leq C h^{l}$, and using that $\bPhi_{T} = \ident{T^{l}}$ for all $T^{l} \notin \TkBdyDom{h}{l}$.

As for \cref{eqn:mapping_b_h_form}, we start with \cref{eqn:discrete_b_form} and write it as
\begin{equation}\label{eqn:mapping_b_h_form_pf_1}
\begin{split}
\bfman{m}{\bvarphi}{v} &= - \sum_{T^{m} \in \TkDom{h}{m}} \left[ \inner{\bvarphiT}{\hess{} v}{T^{m}} - \duality{\varphinn}{\bcn \cdot \grad{} v}{\partial T^{m}} \right],
\end{split}
\end{equation}
noting that $\bcn\tp \bvarphiT \bcn \equiv \varphinn$.  It is only necessary to consider elements adjacent to the boundary, i.e., let $T^{m} \in \TkBdyDom{h}{m}$.  Then, mapping the first term in \cref{eqn:mapping_b_h_form_pf_1} from $\Dom^{m}$ to $\Dom^{l}$, we see that
\begin{equation}\label{eqn:mapping_b_h_form_pf_2}
\begin{split}
\inner{\bvarphi}{\hess{} v}{T^{m}} &= \inner{(\det \bJ)^{-1} \hat{\varphi}^{\alpha \beta}}{\left[ \pd{\alpha} \pd{\beta} \hat{v} - \pd{\gamma} \hat{v} \christ{\gamma}{\alpha \beta} \right]}{T^{l}},
\end{split}
\end{equation}
where $\christ{\gamma}{\alpha \beta}$ are the Christoffel symbols of the second kind (depending on the induced metric $\metmat$).
Note that $\christ{\gamma}{\alpha \beta} = \invmet{\mu \gamma} \pd{\alpha} \pd{\beta} (\bPhi \cdot \tanb{\mu})$, where $\tanb{\mu}$ is a canonical basis vector, and $\invmet{\mu \gamma} \equiv [\invmetmat]_{\mu \gamma}$ is the inverse metric.
Using the estimates in \cref{eqn:curved_elem_map_optim} for $\bPhi$, we can express \cref{eqn:mapping_b_h_form_pf_2} as
\begin{equation*}
\begin{split}
\inner{\bvarphi}{\hess{} v}{T^{m}} &= \inner{\hat{\bvarphi}}{\grad{}^{2} \hat{v}}{T^{l}} + \inner{[(\det \bJ)^{-1} - 1] \hat{\varphi}^{\alpha \beta}}{\pd{\alpha} \pd{\beta} \hat{v}}{T^{l}} \\
&- \inner{\hat{\varphi}^{\alpha \beta}}{\pd{\gamma} \hat{v} \, \pd{\alpha} \pd{\beta} (\bPhi \cdot \tanb{\gamma})}{T^{l}} - \inner{\hat{\varphi}^{\alpha \beta}}{\pd{\gamma} \hat{v} (q^{\mu \gamma} - \kron^{\mu \gamma}) \pd{\alpha} \pd{\beta} (\bPhi \cdot \tanb{\mu})}{T^{l}} \\
&= \inner{\hat{\bvarphi}}{\grad{}^{2} \hat{v}}{T^{l}} - \inner{\hat{\varphi}^{\alpha \beta} \grad{} \hat{v}}{\pd{\alpha} \pd{\beta} \bPhi}{T^{l}} + O(h^{l}) \| \hat{\bvarphi} \|_{L^2(T^{l})} \| \grad{} \hat{v} \|_{H^1(T^{l})},
\end{split}
\end{equation*}
where we introduced $q^{\mu \gamma} = (\det \bJ)^{-1} \invmet{\mu \gamma}$, and note that $\| q^{\mu \gamma} - \kron^{\mu \gamma} \|_{L^{\infty}(T^{l})} \leq C h^{l}$ for all $T^{l} \in \TkDom{h}{l}$.  Furthermore, using the piecewise projection $\CProj |_{T^{l}} : L^2(T^{l}) \to \R$ onto constants, we have that
\begin{equation}\label{eqn:mapping_b_h_form_pf_4}
\begin{split}
	\inner{\hat{\varphi}^{\alpha \beta} \grad{} \hat{v}}{\pd{\alpha} \pd{\beta} \bPhi}{T^{l}} &= \inner{\hat{\varphi}^{\alpha \beta} \CProj \grad{} \hat{v}}{\pd{\alpha} \pd{\beta} \bPhi}{T^{l}} \\
	& + \inner{\hat{\varphi}^{\alpha \beta} [\grad{} \hat{v} - \CProj \grad{} \hat{v}]}{\pd{\alpha} \pd{\beta} (\bPhi - \ident{T^{l}})}{T^{l}} \\
	&\leq \inner{\hat{\varphi}^{\alpha \beta} \CProj \grad{} \hat{v}}{\pd{\alpha} \pd{\beta} \bPhi}{T^{l}} + C h^{l} \| \hat{\bvarphi} \|_{L^2(T^{l})} \| \hess{} \hat{v} \|_{L^2(T^{l})},
\end{split}
\end{equation}
further noting that $\inner{\hat{\varphi}^{\alpha \beta} \CProj \grad{} \hat{v}}{\pd{\alpha} \pd{\beta} \bPhi}{T^{l}} = \inner{\hat{\bvarphi}}{\hess{} [(\bPhi - \ident{T^{l}}) \cdot \CProj \grad{} \hat{v}]}{T^{l}}$.

Next, consider the second term in \cref{eqn:mapping_b_h_form_pf_1}. Express $\partial T^{m} =: E_{1}^{m} \cup E_{2}^{m} \cup \widetilde{E}^{m}$, where $\widetilde{E}^{m}$ is the curved side, and map from $\partial T^{m}$ to $\partial T^{l}$:
\begin{equation}\label{eqn:mapping_b_h_form_pf_5}
\begin{split}
	\duality{\varphinn}{\bcn \cdot \grad{} v}{\partial T^{m}} &= \duality{\frac{\det \bJ}{|\bJ \hat{\btv}|^2} \hatvarphinn}{\hat{\bcn} \cdot \metmat^{-1} \grad{} \hat{v}}{\widetilde{E}^{l}} + \duality{\hatvarphinn}{\hat{\bcn} \cdot \bJ\invtp \grad{} \hat{v}}{E_{1}^{l} \cup E_{2}^{l}},
\end{split}
\end{equation}
where $\hat{\bcn}$ is the unit normal on $\partial T^{l}$, and we used \cref{eqn:matrix_Piola_transf_contra_nn}.  Mapping the non-curved edges is simpler because $\bPhi_{T} = \ident{T^{l}}$ on $E_{1}^{l} \cup E_{2}^{l}$, so $E^{m} \equiv E^{l}$ and $\bcn \equiv \hat{\bcn}$. 
For convenience, define $\bR = \bJ^{-1} (\det \bJ) |\bJ \hat{\btv}|^{-2}$, which implies, by \cref{eqn:curved_elem_map_optim}, that $\| \bR - \eye_{2 \times 2} \|_{L^\infty(\widetilde{E}^{l})} \leq C_1 h^{l}$.  Since $\invmetmat = \bJ^{-1} \bJ\invtp$, we get
\begin{equation}\label{eqn:mapping_b_h_form_pf_6}
\begin{split}
	&\duality{\varphinn}{\bcn \cdot \grad{} v}{\partial T^{m}} = \duality{\hatvarphinn}{\hat{\bcn} \cdot \bJ\invtp \grad{} \hat{v}}{\partial T^{l}} + \| \hat{\bcn} \cdot [\bR - \eye] \bJ\invtp \|_{L^{\infty}(\widetilde{E}^{l})} \times \\
	&\qquad\qquad\quad \times \| \hatvarphinn \|_{L^2(\widetilde{E}^{l})} \left[ \| \grad{} \hat{v} - \grad{} \Ilag_{h}^{l,1} \hat{v} \|_{L^2(\widetilde{E}^{l})} + \| \grad{} \Ilag_{h}^{l,1} \hat{v} \|_{L^2(\widetilde{E}^{l})} \right] \\
	&\quad\qquad= \duality{\hatvarphinn}{\hat{\bcn} \cdot \bJ\invtp \grad{} \hat{v}}{\partial T^{l}} + O(h^{l}) h^{1/2} \| \hatvarphinn \|_{L^2(\widetilde{E}^{l})} \| \hess{} \hat{v} \|_{L^2(T^{l})} \\
	&\qquad\qquad\qquad + O(h^{l}) \| \hatvarphinn \|_{L^2(\widetilde{E}^{l})} \| \grad{} \Ilag_{h}^{l,1} \hat{v} \|_{L^2(\widetilde{E}^{l})},
\end{split}
\end{equation}
for all $T^{l} \in \TkBdyDom{h}{l}$, where $\Ilag_{h}^{l,1}$ satisfies, by \cref{eqn:scaling_trace_estimate}, (SM3.6), and (SM3.7),
\begin{equation}\label{eqn:mapping_b_h_form_trace_estim}
\begin{split}
	h^{1/2} & \| \grad{} \hat{v} - \grad{} \Ilag_{h}^{l,1}\hat{v} \|_{L^2(\partial T^{l})} \leq \\
	&C \left( \| \grad{} (\hat{v} -  \Ilag_{h}^{l,1} \hat{v}) \|_{L^2(T^{l})} + h \| \hess{} (\hat{v} -  \Ilag_{h}^{l,1} \hat{v}) \|_{L^2(T^{l})} \right) \leq C h \| \hess{} \hat{v} \|_{L^2(T^{l})}.
\end{split}
\end{equation}

Furthermore,
\begin{equation*}
\duality{\hatvarphinn}{\hat{\bcn} \cdot \bJ\invtp \grad{} \hat{v}}{\partial T^{l}} = \duality{\hatvarphinn}{\hat{\bcn} \cdot [\bJ\invtp - \eye] \grad{} \hat{v}}{\partial T^{l}} + \duality{\hatvarphinn}{\hat{\bcn} \cdot \grad{} \hat{v}}{\partial T^{l}},
\end{equation*}
and expanding further, and using \cref{eqn:curved_elem_map_optim}, gives
\begin{equation}\label{eqn:mapping_b_h_form_pf_8}
\begin{split}
	&\duality{\hatvarphinn}{\hat{\bcn} \cdot [\bJ\invtp - \eye] \grad{} \hat{v}}{\partial T^{l}} = \duality{\hatvarphinn}{\hat{\bcn} \cdot \bJ\invtp [\eye - \bJ\tp] \grad{} \hat{v}}{\partial T^{l}} \\
	&\quad = \duality{\hatvarphinn}{\hat{\bcn} \cdot [\eye - \bJ\tp] \grad{} \hat{v}}{\partial T^{l}} + \duality{\hatvarphinn}{\hat{\bcn} \cdot [\bJ\invtp - \eye] [\eye - \bJ\tp] \grad{} \hat{v}}{\partial T^{l}} \\
	&\quad = \duality{\hatvarphinn}{\hat{\bcn} \cdot [\eye - \bJ\tp] (\CProj \grad{} \hat{v})}{\partial T^{l}} + \duality{\hatvarphinn}{\hat{\bcn} \cdot [\bJ\invtp - \eye] [\eye - \bJ\tp] \grad{} \hat{v}}{\partial T^{l}} \\
	&\quad + \duality{\hatvarphinn}{\hat{\bcn} \cdot [\eye - \bJ\tp] (\grad{} \hat{v} - \CProj \grad{} \hat{v})}{\partial T^{l}} \\
	&\leq \duality{\hatvarphinn}{\hat{\bcn} \cdot [\eye - \bJ\tp] (\CProj \grad{} \hat{v})}{\partial T^{l}} + C h^{l} h^{1/2} \| \hatvarphinn \|_{L^2(\partial T^{l})} h^{1/2} \| \grad{} \hat{v} \|_{L^2(\partial T^{l})} \\
	&\qquad + C h^{l-1} h^{1/2}\| \hatvarphinn \|_{L^2(\partial T^{l})}  h^{1/2} \| \grad{} \hat{v} - \CProj \grad{} \hat{v} \|_{L^2(\partial T^{l})}.
\end{split}
\end{equation}
Combining with \cref{eqn:mapping_b_h_form_pf_8}, noting that $\CProj$ satisfies an estimate similar to \cref{eqn:mapping_b_h_form_trace_estim}, and using \cref{eqn:scaling_trace_estimate} again, we obtain
\begin{equation}\label{eqn:mapping_b_h_form_pf_9}
\begin{split}
\duality{\hatvarphinn}{\hat{\bcn} \cdot [\bJ\invtp - \eye] \grad{} \hat{v}}{\partial T^{l}} &= \duality{\hatvarphinn}{\hat{\bcn} \cdot \grad{} \left[ (\ident{T^{l}} - \bPhi) \cdot (\CProj \grad{} \hat{v}) \right]}{\partial T^{l}} \\
&\quad + O(h^{l}) \left( h^{1/2} \| \hatvarphinn \|_{L^2(\partial T^{l})} \right) \| \grad{} \hat{v} \|_{H^1(T^{l})}.
\end{split}
\end{equation}
Combining the above results and summing over all $T^{m} \in \TkBdyDom{h}{m}$ completes the proof.
\end{proof}
A simple consequence of \cref{thm:mapping_forms} is
\begin{equation}\label{eqn:mapping_b_h_form_part_2}
\begin{split}
	\bfman{m}{\bvarphi}{v} &= \bfman{l}{\hat{\bvarphi}}{\hat{v}} + O(h^{l-1}) \| \hat{\bvarphi} \|_{0,h,l} \| \grad{} \hat{v} \|_{2,h,l}.
\end{split}
\end{equation}

\subsection{The HHJ Curved Finite Element Space}\label{sec:HHJ_FE_Space}

We can use \cref{eqn:matrix_Piola_transf_contra} to build the global, conforming, HHJ finite element space (on curved elements) by mapping from a reference element (see subsection SM4.2 
for details), i.e., $\HHJ_{h}^{m} \equiv \HHJ_{h}^{m} (\Dom^{m}) \subset \Mnn^{m} (\Dom^{m})$ is defined by
\begin{equation}\label{eqn:discrete_tensor_nn_space}
\begin{split}
\HHJ_{h}^{m} (\Dom^{m}) := \{ \bvarphi \in \Mnn^{m} & (\Dom^{m}) \mid \bvarphi |_{T} \circ \MapT{T}^{m} := (\det \grad{} \MapT{T}^{m})^{-2} (\grad{} \MapT{T}^{m}) \hat{\bvarphi} (\grad{} \MapT{T}^{m})\tp, \\
&\qquad\qquad\qquad \hat{\bvarphi} \in \Pk_{r}(T^{1};\symmat), ~ \forall T^{m} \in \TkDom{h}{m} \}.
\end{split}
\end{equation}
Note that $\HHJ_{h}^{m}$ is isomorphic to $\HHJ_{h}^{1}$, for $1 \leq m \leq k$ and $m=\infty$. 

We also have the following tensor-valued interpolation operator $\IHHJ_{h}^{1} : \Mnn^{1} (\Dom^{1}) \to \HHJ_{h}^{1}$ \cite{Brezzi_RIA1976,Babuska_MC1980} defined on each element $T^{1} \in \TkDom{h}{1}$ by
\begin{equation}\label{eqn:HHJ_hessian_interp_oper_T1}
\begin{split}
\int_{E^{1}} \bcn\tp \left[ \IHHJ_{h}^{1} \bvarphi - \bvarphi \right] \bcn \, q \, \darc &= 0, \quad \forall q \in \Pk_{r}(E^{1}), ~\forall E^{1} \in \partial T^{1}, \\
\int_{T^{1}} \left[ \IHHJ_{h}^{1} \bvarphi - \bvarphi \right] \dd \bmeta \, \darea &= 0, \quad \forall \bmeta \in \Pk_{r-1}(T^{1};\symmat).
\end{split}
\end{equation}
Recall \cref{thm:mapping_forms} and set $\bPhi_{T} \equiv \bPhi^{1m}_{T} := \MapT{T}^{m} : T^{1} \to T^{m}$ with $\bJ_{T} := \grad{} \bPhi_{T}$. Now, given $\bvarphi \in \Mnn^{m} (\Dom^{m})$, we define the global interpolation operator, $\IHHJ_{h}^{m} : \Mnn^{m} (\Dom^{m}) \to \HHJ_{h}^{m}$, element-wise through
\begin{equation}\label{eqn:HHJ_hessian_interp_oper}
\begin{split}
\IHHJ_{h}^{m} \bvarphi \big{|}_{T^{m}} \circ \bPhi_{T} &= (\det \bJ_{T})^{-2} \bJ_{T} \left( \IHHJ_{h}^{1} \hat{\bvarphi} \right) \bJ_{T}\tp,
\end{split}
\end{equation}
where $\hat{\bvarphi} := (\det \bJ_{T})^{2} \bJ_{T}^{-1} (\bvarphi \circ \bPhi_{T}) \bJ_{T}\invtp$ (i.e., see \cref{eqn:matrix_Piola_transf_contra}).  The operator $\IHHJ_{h}^{m}$ satisfies many basic approximation results which can be found in subsection SM4.3. 

On affine elements, we have a Fortin like property involving $\bfman{1}{\cdot}{\cdot}$ \cite{Brezzi_RIA1976,Babuska_MC1980,Blum_CM1990}:
\begin{equation}\label{eqn:b_h_form_interp_oper_Fortin}
\begin{split}
\bfman{1}{\bvarphi - \IHHJ_{h}^{1} \bvarphi}{\theta_{h} v_h} &= 0, \quad \forall \bvarphi \in \Mnn^{1}(\Dom^{1}), \quad v_h \in \LAG_h^{1}, \\
\bfman{1}{\theta_{h} \bvarphi_{h}}{v - \Ilag_{h}^{1} v} &= 0, \quad \forall \bvarphi_{h} \in \HHJ_h^{1}, \quad v \in H^2_{h}(\Dom^{1}),
\end{split}
\end{equation}
which holds for \emph{any} piecewise constant function $\theta_{h}$ defined on $\TkDom{h}{1}$; in \cite{Brezzi_RIA1976,Babuska_MC1980,Blum_CM1990}, it is assumed that $\theta_{h} \equiv 1$.
However, \cref{eqn:b_h_form_interp_oper_Fortin} does not hold on curved elements, but instead we have the following result.

\begin{lemma}\label{lem:b_h_form_curved_Fortin_oper}
Let $1 \leq m \leq k$, or $m=\infty$, and set $r \geq 0$ to be the degree of HHJ space $\HHJ_{h}^{m}$, and $r+1$ to be the degree of the Lagrange space $\LAG_{h}^{m}$.  Moreover, assume $\HHJ_{h}^{m}$ and $\LAG_{h}^{m}$ impose \emph{no} boundary conditions. Then, the following estimates hold:
\begin{equation}\label{eqn:b_h_form_curved_Fortin_oper}
\begin{split}
\left| \bfman{m}{\bvarphi_h}{v - \Ilag_{h}^{m} v} \right| &\leq C \| \bvarphi_{h} \|_{L^2(\strip^{m})}
 \\
& \times \left( \| \grad{} (v - \Ilag_{h}^{m} v) \|_{L^2(\strip^{m})} + h \| \grad{}^2 (v - \Ilag_{h}^{m} v) \|_{L^2(\TkBdyDom{h}{m})} \right), \\
\left| \bfman{m}{\bvarphi - \IHHJ_{h}^{m} \bvarphi}{v_h} \right| &\leq C \| \bvarphi -\IHHJ_{h}^{m} \bvarphi \|_{H^{0}_{h}(\strip^{m})} \| \grad{} v_{h} \|_{L^2(\strip^{m})},
\end{split}
\end{equation}
for all $\bvarphi \in \Mnn(\Dom^{m})$, $v_h \in \LAG_{h}^{m}$, and all $\bvarphi_{h} \in \HHJ_h^{m}$, $v \in H^2_{h}(\Dom^{m})$, where $C$ is an independent constant.  Note that $C=0$ if $m=1$.
\end{lemma}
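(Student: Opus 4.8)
The plan is to reduce the curved case to the affine case via \cref{thm:mapping_forms} with $l=1$, use the \emph{exact} Fortin identities \cref{eqn:b_h_form_interp_oper_Fortin} to annihilate the leading term, and then control the geometric correction terms of \cref{eqn:mapping_b_h_form} by elementary scaling, trace, and inverse estimates. When $m=1$ we have $\bPhi_{T} \equiv \ident{T^{1}}$, so $\bfman{m}{\cdot}{\cdot} = \bfman{1}{\cdot}{\cdot}$ and both left-hand sides vanish identically by \cref{eqn:b_h_form_interp_oper_Fortin} with $\theta_{h} \equiv 1$; this is the assertion $C=0$. So I would assume $m>1$ (or $m=\infty$), take $l=1$, and set $\bPhi \equiv \bPhi^{1m} = \MapT{}^{m}$, $\bJ := \grad{} \bPhi$.

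For the second estimate I would put $\bpsi := \bvarphi - \IHHJ_{h}^{m}\bvarphi$. By \cref{eqn:HHJ_hessian_interp_oper} and \cref{eqn:matrix_Piola_transf_contra}, the matrix Piola transform of $\bpsi$ through $\bPhi_{T}$ is exactly $\hat{\bpsi} = \hat{\bvarphi} - \IHHJ_{h}^{1}\hat{\bvarphi} \in \Mnn^{1}(\Dom^{1})$, while $\hat{v}_{h} := v_{h}|_{T}\circ \MapT{T}^{m}$ lies in $\LAG_{h}^{1}$ since composition preserves the Lagrange space \cref{eqn:std_CG_Lagrange_FE}. Substituting into \cref{eqn:mapping_b_h_form} and invoking the first identity of \cref{eqn:b_h_form_interp_oper_Fortin} to kill $\bfman{1}{\hat{\bpsi}}{\hat{v}_{h}} = 0$, there remain four correction terms supported on $\TkBdyDom{h}{1}$ and $\EkBdyDom{h}{1}$. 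I would bound each using, for the piecewise constant vector $c = \CProj \grad{}\hat{v}_{h}$, the elementary identities $\grad{}[(\ident{T^{1}} - \bPhi_{T})\cdot c] = [\eye - \bJ]\tp c$ and $\hess{}[(\ident{T^{1}} - \bPhi_{T})\cdot c] = -[\hess{}(\bPhi_{T} - \ident{T^{1}})]\cdot c$, together with the sharp bounds $\|\eye - \bJ\|_{L^{\infty}} \le Ch$ and $\|\hess{}(\bPhi_{T} - \ident{T^{1}})\|_{L^{\infty}} \le C$ from \cref{eqn:curved_elem_map_optim} (with $l=1$), the $L^{2}$-stability of $\CProj$, inverse estimates on the polynomial $\hat{v}_{h}$, and the trace estimate \cref{eqn:scaling_trace_estimate} applied to $\hat{\psi}^{\mathrm{nn}}$ on element boundaries; each term then reduces to $C\|\hat{\bpsi}\|_{0,h,1}\|\grad{}\hat{v}_{h}\|_{L^{2}}$ over the strip, with all stray powers of $h$ accounted for. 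Finally I would transport the bound back to $\Dom^{m}$ using the norm equivalences \cref{eqn:HHJ_tensor_norm_equiv} and the map estimates of \cref{thm:curved_elem_map_estimates}.

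For the first estimate the argument runs the same way with the roles of the two arguments reversed. I would set $w := v - \Ilag_{h}^{m} v$; by the element-wise definition of $\Ilag_{h}^{m}$ through $\MapT{T}^{m}$, its pullback $\hat{w} = \hat{v} - \Ilag_{h}^{1}\hat{v}$ is an affine-mesh Lagrange interpolation error, and $\hat{\bvarphi}_{h}$, the Piola transform of $\bvarphi_{h} \in \HHJ_{h}^{m}$, lies in $\HHJ_{h}^{1}$. Applying \cref{eqn:mapping_b_h_form} once more, now using the \emph{second} identity of \cref{eqn:b_h_form_interp_oper_Fortin} to kill $\bfman{1}{\hat{\bvarphi}_{h}}{\hat{w}} = 0$, I would estimate the same four correction terms, this time with a gradient or a Hessian distributed onto $\hat{w}$. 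The two quantities $\|\grad{}\hat{w}\|_{L^{2}(\strip^{1})}$ and $h\|\grad{}^{2}\hat{w}\|_{L^{2}(\TkBdyDom{h}{1})}$ appearing on the right-hand side are exactly what the factors $\eye-\bJ$ and $\hess{}(\bPhi_{T}-\ident{T^{1}})$, together with $\|\grad{}(\hat{w} - \Ilag_{h}^{1,1}\hat{w})\|_{L^{2}} \le Ch\|\grad{}^{2}\hat{w}\|_{L^{2}}$ in the term containing $\grad{}\Ilag_{h}^{1,1}\hat{w}$, produce; here no inverse estimate for $\hat{w}$ is available, but none is needed since $\hat{w}$ is never differentiated more than twice. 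A final change of variables using \cref{prop:equiv_norms} and \cref{thm:curved_elem_map_estimates} returns the norms to $\Dom^{m}$.

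The hard part will be the bookkeeping around the Hessian correction term $\inner{\hat{\bvarphi}}{\hess{}[(\ident{T^{1}} - \bPhi_{T})\cdot \CProj\grad{}\hat{v}]}{T^{1}}$ in \cref{eqn:mapping_b_h_form}: because $\|\hess{}(\bPhi_{T} - \ident{T^{1}})\|_{L^{\infty}}$ is only $O(1)$ and not $O(h)$, this term must be controlled by $\|\grad{}\hat{v}\|_{L^{2}}$ \emph{alone}, which is possible precisely because $\CProj$ replaces $\grad{}\hat{v}$ by a piecewise constant so that its derivatives never appear. One must likewise track that every other geometric factor yields an $O(1)$ constant with exactly the right mesh-scaling balance, so that the two estimates land on the stated right-hand sides and the constant genuinely vanishes when $m=1$; this relies on using the sharp bounds \cref{eqn:curved_elem_map_optim} rather than cruder ones, and on the localization to $\strip^{m}$ afforded by $\bPhi_{T} = \ident{T^{1}}$ away from the boundary.
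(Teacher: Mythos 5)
Your proposal is correct and follows essentially the same route as the paper: reduce to the affine mesh via \cref{thm:mapping_forms} with $l=1$, kill the leading term $\bfman{1}{\cdot}{\cdot}$ with the exact Fortin identities \cref{eqn:b_h_form_interp_oper_Fortin} (using the commutation of $\IHHJ_h^m$ and $\Ilag_h^m$ with the element maps), bound the remaining correction terms with the sharp map estimates \cref{eqn:curved_elem_map_optim}, inverse/trace estimates, and the $\CProj$ localization, and transport back by the norm equivalences \cref{prop:equiv_norms} and \cref{eqn:HHJ_tensor_norm_equiv}. You also correctly isolate the one genuinely delicate point — the $O(1)$ Hessian of $\bPhi_T-\ident{T^1}$ being absorbed because only $\CProj\grad{}\hat v$ multiplies it — which is exactly how the paper's proof handles it.
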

\begin{proof}

Consider the map $\bPhi_{T} : T^{l} \to T^{m}$ defined in \cref{thm:mapping_forms} with approximation properties given by \cref{eqn:curved_elem_map_optim}.
Let $v \in H^2_{h}(\Dom^{m})$, so by definition, there exists $\hat{v} \in H^2_{h}(\Dom^{l})$ such that $v \circ \bPhi_{T} = \hat{v}$ on each $T^{l} \in \TkDom{h}{l}$. By \cref{prop:equiv_norms}, $\| \hess{} v \|_{L^2(\TkDom{h}{m})} \approx C \left( \| \hess{} \hat{v} \|_{L^2(\TkDom{h}{l})} + h^{l-1} \| \grad{} \hat{v} \|_{L^2(\TkDom{h}{l})} \right)$.  Moreover, given $\bvarphi \in \Mnn(\Dom^{m})$ there exists $\hat{\bvarphi} \in \Mnn(\Dom^{l})$ given by
$\bvarphi \circ \bPhi_{T} := (\det \grad{} \bPhi_{T})^{-2} (\grad{} \bPhi_{T}) \hat{\bvarphi} (\grad{} \bPhi_{T})\tp$ (c.f. \cref{eqn:matrix_Piola_transf_contra}).
By \cref{eqn:HHJ_tensor_norm_equiv} $\| \bvarphi \|_{0,h,m} \approx \| \hat{\bvarphi} \|_{0,h,l}$.

Next, we estimate the ``problematic'' terms in \cref{eqn:mapping_b_h_form}.  First, we have
\begin{equation}\label{eqn:b_h_form_curved_Fortin_oper_pf_1}
\begin{split}
	\inner{\hat{\bvarphi}}{\hess{} [(\ident{T^{l}} - \bPhi_{T}) \cdot \CProj \grad{} \hat{v}]}{T^{l}} &\leq C h^{l-1} \|  \hat{\bvarphi} \|_{L^2(T^{l})} \| \grad{} \hat{v} \|_{L^{2}(T^{l})},
\end{split}
\end{equation}
where we used the optimal mapping properties in \cref{eqn:curved_elem_map_optim}.  In addition, we have
\begin{equation}\label{eqn:b_h_form_curved_Fortin_oper_pf_2}
\begin{split}
\duality{\hatvarphinn}{\hat{\bcn} \cdot \grad{} \left[ (\ident{T^{l}} - \bPhi_{T}) \cdot \CProj \grad{} \hat{v} \right]}{\partial T^{l}} \leq C h^{l} &\| \hatvarphinn \|_{L^2(\partial T^{l})} \| \CProj \grad{} \hat{v} \|_{L^2(\partial T^{l})} \\
\leq C h^{l-1} & h^{1/2} \| \hatvarphinn \|_{L^2(\partial T^{l})} \| \grad{} \hat{v} \|_{L^2(T^{l})},
\end{split}
\end{equation}
by the inverse estimate $\| \CProj \grad{} \hat{v} \|_{L^2(\partial T^{l})} \leq C h^{-1/2} \| \CProj \grad{} \hat{v} \|_{L^2(T^{l})}$.  Lastly,
\begin{equation}\label{eqn:b_h_form_curved_Fortin_oper_pf_3}
\begin{split}
	O(h^{l}) \| \hatvarphinn \|_{L^2(E^{l})} \| \grad{} \Ilag_{h}^{l,1} \hat{v} \|_{L^2(E^{l})} \leq C h^{l-1} & h^{1/2} \| \hatvarphinn \|_{L^2(E^{l})} \| \grad{} \hat{v} \|_{L^2(T^{l})},
\end{split}
\end{equation}
for all $E^{l} \in \EkBdyDom{h}{l}$ (again using an inverse estimate and stability of the interpolant).  Plugging \cref{eqn:b_h_form_curved_Fortin_oper_pf_1,eqn:b_h_form_curved_Fortin_oper_pf_2,eqn:b_h_form_curved_Fortin_oper_pf_3} into \cref{eqn:mapping_b_h_form}, yields
\begin{equation}\label{eqn:b_h_form_curved_Fortin_oper_pf_5}
\begin{split}
\left| \bfman{m}{\bvarphi}{v} \right| &\leq \left| \bfman{l}{\hat{\bvarphi}}{\hat{v}} \right| + C h^{l-1} \| \hat{\bvarphi} \|_{0,h,l} \left( \| \grad{} \hat{v} \|_{L^2(\strip^{l})} + h \| \grad{}^2 \hat{v} \|_{L^2(\TkBdyDom{h}{l})} \right).
\end{split}
\end{equation}

For the first estimate in \cref{eqn:b_h_form_curved_Fortin_oper}, set $l=1$,
replace $v$ with $v - \Ilag_{h}^{m} v$, set $\bvarphi = \bvarphi_h \in \HHJ_h^{m}$, use \cref{eqn:b_h_form_interp_oper_Fortin}, and \cref{eqn:HHJ_tensor_norm_equiv} to get
\begin{equation}\label{eqn:b_h_form_curved_Fortin_oper_pf_6}
\begin{split}
\left| \bfman{m}{\bvarphi_h}{v - \Ilag_{h}^{m} v} \right| &\leq C \| \hat{\bvarphi}_h \|_{L^2(\strip^{1})} \left( \| \grad{} (\hat{v} - \Ilag_{h}^{1} \hat{v}) \|_{L^2(\strip^{1})} + h \| \grad{}^2 (\hat{v} - \Ilag_{h}^{1} \hat{v}) \|_{L^2(\TkBdyDom{h}{1})} \right),
\end{split}
\end{equation}
then use equivalence of norms (see \cref{prop:equiv_norms,eqn:HHJ_tensor_norm_equiv}).

For the second estimate, replace $\bvarphi$ with $\bvarphi - \IHHJ_{h}^{m} \bvarphi$, set $v = v_h \in \LAG_{h}^{m}$, use \cref{eqn:b_h_form_interp_oper_Fortin}, and use an inverse inequality to get
\begin{equation}\label{eqn:b_h_form_curved_Fortin_oper_pf_7}
\begin{split}
\left| \bfman{m}{\bvarphi - \IHHJ_{h}^{m} \bvarphi}{v_h} \right| &\leq C \| \hat{\bvarphi} -\IHHJ_{h}^{1} \hat{\bvarphi} \|_{0,h,1} \| \grad{} \hat{v}_h \|_{L^2(\strip^{1})},
\end{split}
\end{equation}
followed by an equivalence of norms argument.
\end{proof}

\subsection{The HHJ Mixed Formulation}

We pose \cref{eqn:Kirchhoff_manifold_skeleton_mixed} on $\Dom^{m}$ with continuous skeleton spaces denoted $\SV^{m}_{h} \equiv \SV^{m}_{h} (\Dom^{m})$ and $\SW^{m}_{h} \equiv \SW^{m}_{h} (\Dom^{m})$. Fixing the polynomial degree $r \geq 0$, the conforming finite element spaces are
\begin{equation}\label{eqn:HHJ_discrete_spaces}
\V_h^{m} \subset \SV^{m}_{h}, \qquad \W_h^{m} \subset \SW^{m}_{h}.
\end{equation}
The conforming finite element approximation to \cref{eqn:Kirchhoff_manifold_skeleton_mixed} is as follows.
Given $f \in H^{-1}(\Dom^{m})$, find $\bsigma_{h} \in \V_h^{m}$, $w_h \in \W_h^{m}$ such that
\begin{equation}\label{eqn:Kirchhoff_manifold_HHJ_mixed}
\begin{split}
\afman{m}{\bsigma_{h}}{\bvarphi} + \bfman{m}{\bvarphi}{w_h} &= 0, ~ \forall \bvarphi \in \V_h^{m}, \\
\bfman{m}{\bsigma_{h}}{v} &= -\duality{f}{v}{\Dom^{m}}, ~ \forall v \in \W_h^{m}.
\end{split}
\end{equation}
The well-posedness of \cref{eqn:Kirchhoff_manifold_HHJ_mixed} is established in the next section, i.e., we prove the classic LBB conditions \cite{Boffi_book2013}.  With this, we have the following a priori estimate:
\begin{equation}\label{eqn:Kirchhoff_manifold_HHJ_mixed_a_priori_est}
\| w_h \|_{2,h,m} + \| \bsigma_h \|_{0,h,m} \leq C \| f \|_{H^{-1}(\Dom^{m})}.
\end{equation}
Note that LBB conditions for \cref{eqn:Kirchhoff_manifold_HHJ_mixed}, for the case $m=1$, was originally shown in \cite{Blum_CM1990}.

\subsubsection{Well-posedness}

Obviously, we have
\begin{equation}\label{eqn:bounded_a_form}
\afman{m}{\bsigma}{\bvarphi} \leq \afcont \| \bsigma \|_{L^2(\Dom^{m})} \| \bvarphi \|_{L^2(\Dom^{m})}, \quad \forall \bsigma, \bvarphi \in H^0_{h} (\Dom^{m};\symmat) \supset \HHJ_h^{m},
\end{equation}
\begin{equation}\label{eqn:bounded_b_form}
\begin{split}
|\bfman{m}{\bvarphi}{v}| &\leq \bfcont \| \bvarphi \|_{0,h,m} \| v \|_{2,h,m}, \quad \forall \bvarphi \in \V_{h}^{m}, ~  v \in \W_{h}^{m},
\end{split}
\end{equation}
and we have coercivity of $\afman{m}{\cdot}{\cdot}$, which is a curved element version of \cite[Thm. 2]{Babuska_MC1980}. 
\begin{lemma}\label{lem:mesh_depend_a_coercive_over_kernel_b_abstract}
Assume the domain $\Dom^{m} \subset \R^2$ is piecewise smooth consisting of curved elements as described in \cref{sec:curved_FEM}.
Then there is a constant $\afcoer > 0$, independent of $h$ and $m$, such that
\begin{equation}\label{eqn:a_coercive_over_kernel_b_h}
\begin{split}
\afman{m}{\bsigma}{\bsigma} &\geq \min (|\binvelasten|) \| \bsigma \|_{L^2(\Dom^{m})}^2 \geq \afcoer \| \bsigma \|_{0,h}^2, ~~ \forall \bsigma \in \HHJ_{h}^{m}, ~ \forall h > 0,
\end{split}
\end{equation}
where $\afcoer$ depends on $\binvelasten$.
\end{lemma}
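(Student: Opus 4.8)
The plan is to prove the two inequalities separately; both follow quickly from structure that is already in place. For the first inequality, I would begin from the definition $\afman{m}{\bsigma}{\bsigma} = \inner{\bsigma}{\binvelasten\bsigma}{\TkDom{h}{m}} = \int_{\Dom^{m}} \bsigma \dd \binvelasten\bsigma$ and use that $\binvelasten$, being the inverse of the symmetric positive-definite constitutive operator $\belasten$ (recall $\poisson \in (-1,1)$), is itself a symmetric positive-definite linear operator on the three-dimensional space $\symmat$ of symmetric $2\times 2$ tensors. Hence there is a pointwise bound $\bsigma(\vx) \dd \binvelasten\bsigma(\vx) \geq \min(|\binvelasten|)\,|\bsigma(\vx)|^{2}$ a.e.\ in $\Dom^{m}$, where $\min(|\binvelasten|) > 0$ denotes the smallest eigenvalue of $\binvelasten$ acting on $\symmat$ (one can read it off as $\min\{1/(\bendmod(1-\poisson)),\,1/(\bendmod(1+\poisson))\}$ by splitting a symmetric tensor into its deviatoric and spherical parts, though this explicit value is not needed). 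Integrating over $\Dom^{m}$ yields $\afman{m}{\bsigma}{\bsigma} \geq \min(|\binvelasten|)\,\|\bsigma\|_{L^2(\Dom^{m})}^{2}$, which is the first inequality, valid for all $\bsigma \in H^0_{h}(\Dom^{m};\symmat) \supset \HHJ_{h}^{m}$.

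For the second inequality I would simply invoke the norm equivalence $\|\bvarphi\|_{0,h,m} \approx \|\bvarphi\|_{L^2(\Dom^{m})}$ for $\bvarphi \in \HHJ_{h}^{m}$ recorded in \cref{eqn:HHJ_tensor_norm_equiv}, whose constants are independent of $h$ and of $m$ by the curved-element estimates of \cref{sec:curved_FEM}. This gives a constant $C > 0$ with $\|\bsigma\|_{0,h}^{2} \leq C\,\|\bsigma\|_{L^2(\Dom^{m})}^{2}$ for every $\bsigma \in \HHJ_{h}^{m}$. Chaining this with the first inequality produces $\afman{m}{\bsigma}{\bsigma} \geq (\min(|\binvelasten|)/C)\,\|\bsigma\|_{0,h}^{2}$, so that $\afcoer := \min(|\binvelasten|)/C$ depends only on the material tensor $\binvelasten$ and on the (uniform) norm-equivalence constant, hence is independent of $h$ and $m$, as required. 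Note this also implies, a fortiori, coercivity on the kernel of $\bfman{m}{\cdot}{\cdot}$.

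I do not expect a genuine obstacle. The only point deserving care is that the mesh-dependent norm $\|\cdot\|_{0,h}$ carries the boundary-skeleton contribution $h\,\|\bcn\tp\bvarphi\bcn\|_{L^2(\EkDom{h}{m})}^{2}$, so the second step genuinely relies on the fact — true on the discrete space $\HHJ_{h}^{m}$ via an inverse/scaled-trace estimate, but \emph{not} on all of $H^0_{h}(\Dom^{m};\symmat)$ — that this edge term is controlled by $\|\bvarphi\|_{L^2(\Dom^{m})}^{2}$; this is exactly \cref{eqn:HHJ_tensor_norm_equiv}. Everything else is pointwise linear algebra on $\symmat$ followed by integration.
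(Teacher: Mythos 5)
Your proof is correct and follows essentially the same route as the paper: the pointwise positive-definiteness of $\binvelasten$ gives the first inequality, and the norm equivalence \cref{eqn:HHJ_tensor_norm_equiv} on $\HHJ_h^m$ gives the second. You merely spell out the eigenvalue bound (correctly) where the paper says ``clearly,'' and your closing caveat about the edge term in $\|\cdot\|_{0,h}$ being controlled only on the discrete space is exactly the right point of care.
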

\begin{proof}
Clearly, $\afman{m}{\bsigma}{\bsigma} \geq C_0 \| \bsigma \|_{L^2(\Dom^{m})}^2$, where $C_0$ depends on $\invelasten{\gamma}{\omega}{\alpha}{\beta}$.
Furthermore, by \cref{eqn:HHJ_tensor_norm_equiv}, $\| \bsigma \|_{L^2(\Dom^{m})} \geq C^{-1} \| \bsigma \|_{0,h}$, so then $\afcoer := C_0 / C^2$.
\end{proof}

\subsubsection{Inf-Sup}

Next, we have a curved element version of the inf-sup condition in \cite[Lem. 5.1]{Blum_CM1990}.
\begin{lemma}\label{lem:mesh_depend_b_inf_sup_flat_domain}
Assume the domain $\Dom^{m} \subset \R^2$ is piecewise smooth consisting of curved elements as described in \cref{sec:curved_FEM}.
Then there is a constant $\bfcoer > 0$, independent of $h$ and $m$, such that for all $h$ sufficiently small
\begin{equation}\label{eqn:mesh_depend_b_inf_sup_flat_domain}
\begin{split}
\sup_{\bvarphi \in \V_h^{m}} \frac{|\bfman{m}{\bvarphi}{v}|}{\| \bvarphi \|_{0,h,m}}
\geq \bfcoer \| v \|_{2,h,m}, \quad \forall v \in \W_h^{m}, ~ \forall h > 0.
\end{split}
\end{equation}
\end{lemma}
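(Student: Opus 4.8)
\emph{Proof plan.}
The plan is to reduce the estimate to the flat-domain inf-sup of \cite[Lem.~5.1]{Blum_CM1990} by transporting everything to the polygonal domain $\Dom^{1}$ with the diffeomorphism $\bPhi \equiv \bPhi^{1m}$ of \cref{eqn:inter_curved_elem_map}, and then to absorb the resulting geometric discrepancy using \cref{thm:mapping_forms} and \cref{lem:b_h_form_curved_Fortin_oper}. The key simplification is that, after the change of variables, it suffices to exhibit a \emph{single} good test function on $\Dom^{m}$ for each $v$, since all relevant norms on $\Dom^{m}$ and on $\Dom^{1}$ are equivalent with $h$-independent constants.

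Concretely, I would proceed as follows. Given $v \in \W_{h}^{m}$, set $\hat v := v \circ \bPhi \in \W_{h}^{1}$; this is a bijection, since $\W_{h}^{m}$ is defined through $\MapT{T}^{m} = \bPhi^{1m}_{T}$, and the essential boundary conditions transfer because $\bPhi$ respects the partition of $\BdyDom$. Because $v$ vanishes on $\partial \Dom^{m}$, \cref{prop:equiv_norms}, in particular \cref{eqn:equiv_norms_zero_bdy}, gives $\| v \|_{2,h,m} \approx \| \hat v \|_{2,h,1}$; likewise the matrix Piola transform maps $\V_{h}^{m}$ bijectively onto $\V_{h}^{1}$ with $\| \bvarphi \|_{0,h,m} \approx \| \hat\bvarphi \|_{0,h,1}$ by \cref{eqn:HHJ_tensor_norm_equiv}. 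Next, by \cite[Lem.~5.1]{Blum_CM1990} there is $\hat\bvarphi_{h} \in \V_{h}^{1}$ with $\| \hat\bvarphi_{h} \|_{0,h,1} \le \| \hat v \|_{2,h,1}$ and $\bfman{1}{\hat\bvarphi_{h}}{\hat v} \ge \beta_{1} \| \hat v \|_{2,h,1}^{2}$; moreover $\hat\bvarphi_{h}$ can be taken of the explicit form built there, namely its normal--normal trace on each edge $E$ is $-h^{-1}$ times the $L^{2}(E)$-projection of $\jump{\bcn \cdot \grad{} \hat v}$ onto $\Pk_{r}$, and its interior moments against $\Pk_{r-1}$ reproduce $-\hess{}\hat v$. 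I would let $\bvarphi \in \V_{h}^{m}$ be the Piola pushforward of $\hat\bvarphi_{h}$, so $\| \bvarphi \|_{0,h,m} \approx \| \hat\bvarphi_{h} \|_{0,h,1} \le \| \hat v \|_{2,h,1} \approx \| v \|_{2,h,m}$. Using \cref{thm:mapping_forms}, i.e.\ the identity \cref{eqn:mapping_b_h_form}, together with the flat Fortin identity \cref{eqn:b_h_form_interp_oper_Fortin} exactly as in the proof of \cref{lem:b_h_form_curved_Fortin_oper}, one writes $\bfman{m}{\bvarphi}{v} = \bfman{1}{\hat\bvarphi_{h}}{\hat v} + \mathrm{Rem}$, where $\mathrm{Rem}$ collects the geometric correction terms, and bounds $|\mathrm{Rem}|$ by $\| \hat\bvarphi_{h} \|_{0,h,1}$ times a boundary-layer quantity controlled — via the scaling/trace estimate \cref{eqn:scaling_trace_estimate}, inverse estimates on $\V_{h}^{1}$, and the Poincar\'e inequality \cref{eqn:scalar_H2_h_norm_trace_Poincare} — by $\| \hat v \|_{2,h,1}$. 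Once $h$ is small enough that the constant in this bound is below $\beta_{1}/2$, this yields $|\bfman{m}{\bvarphi}{v}| \ge \tfrac{1}{2}\beta_{1}\| \hat v \|_{2,h,1}^{2} \gtrsim \| v \|_{2,h,m}^{2}$; dividing by $\| \bvarphi \|_{0,h,m}$ then gives \cref{eqn:mesh_depend_b_inf_sup_flat_domain} with a constant $\bfcoer$ independent of $h$ and $m$.

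The step I expect to be the main obstacle is the control of $\mathrm{Rem}$. The mapping identity \cref{eqn:mapping_b_h_form} produces correction terms built from $\hess{}$ and $\grad{}$ of $\ident{T^{1}} - \bPhi_{T}$, whose \emph{second} derivatives are only $O(1)$ when $l=1$ (see \cref{eqn:curved_elem_map_optim}); a crude bound would thus make $\mathrm{Rem}$ comparable to the leading term and destroy the inf-sup constant. Overcoming this requires the structural device already exploited in \cref{lem:b_h_form_curved_Fortin_oper}: the piecewise-constant gradient $\CProj\grad{}\hat v$ (and the low-order Lagrange interpolant) appearing in \cref{eqn:mapping_b_h_form} lets the dangerous contributions be regrouped as $\hess{}$ of $(\ident{T^{1}} - \bPhi_{T})\cdot\CProj\grad{}\hat v$ and then absorbed, either through the Fortin identity \cref{eqn:b_h_form_interp_oper_Fortin} or by an interpolation-error estimate localized to the strip $\strip^{1}$, whose measure is $O(h)$ — this is where the extra smallness is recovered. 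A secondary technical point is that on curved boundary elements $\jump{\bcn\cdot\grad{}v}$ is no longer a polynomial along the (straight) interelement edges, so one must check that replacing it by its $\Pk_{r}$-projection in the reconstruction of $\hat\bvarphi_{h}$ costs only a boundary-layer term of the same controllable size, which again follows from \cref{eqn:scaling_trace_estimate} and \cref{eqn:curved_elem_map_optim}. With these estimates established, the smallness of $h$ closes the argument, and the case $m=1$ is just \cite[Lem.~5.1]{Blum_CM1990} with $\mathrm{Rem}=0$.
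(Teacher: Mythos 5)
Your overall architecture coincides with the paper's: pull everything back to $\Dom^{1}$ via $\bPhi^{1m}$ and the matrix Piola transform, invoke the flat-domain inf-sup of Blum--Rannacher, use the norm equivalences \cref{eqn:equiv_norms_zero_bdy} and \cref{eqn:HHJ_tensor_norm_equiv}, and treat the discrepancy $\bfman{m}{\bvarphi}{v}-\bfman{1}{\hat\bvarphi}{\hat v}$ as a perturbation via \cref{eqn:mapping_b_h_form}. (Whether you then pick one near-optimal test function or keep the supremum, as the paper does, is immaterial.) The problem is the step you yourself flag as the main obstacle. With $l=1$, the term $\sum_{T^{1}\in\TkBdyDom{h}{1}}\inner{\hat{\bvarphi}}{\hess{}[(\ident{T^{1}}-\bPhi_{T})\cdot\CProj\grad{}\hat v]}{T^{1}}$ is bounded only by $C\,\|\hat{\bvarphi}\|_{L^{2}(\strip^{1})}\,\|\grad{}\hat v\|_{L^{2}(\strip^{1})}$ with \emph{no} positive power of $h$, since $\|\grad{}^{2}\bPhi_{T}\|_{L^{\infty}}=O(h^{l-1})=O(1)$. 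Neither of your two proposed remedies closes this. The Fortin identity \cref{eqn:b_h_form_interp_oper_Fortin} does not apply: its two versions require either the first argument to be an interpolation error $\bvarphi-\IHHJ_{h}^{1}\bvarphi$ or the second to be $v-\Ilag_{h}^{1}v$, whereas here the first argument is an arbitrary $\hat\bvarphi\in\V_{h}^{1}$ and the second, $(\ident{T^{1}}-\bPhi_{T})\cdot\CProj\grad{}\hat v$, is not an interpolation error for general $m$ (that device is reserved for the consistency error in \cref{lem:err_estim_exact_discrete}, where the optimal map \cref{ass:HHJ_Lag_interp_optimal_map} and $m=r+1$ make it work). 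And the fact that $|\strip^{1}|=O(h)$ buys nothing by itself: to convert it into smallness of $\|\grad{}\hat v\|_{L^{2}(\strip^{1})}$ you need a pointwise bound on $\grad{}\hat v$, and the tools you list (trace estimate \cref{eqn:scaling_trace_estimate}, inverse estimates, Poincar\'e) give at best $\|\grad{}\hat v\|_{L^{\infty}(T)}\le Ch^{-1}\|\grad{}\hat v\|_{L^{2}(T)}$, which makes things worse.

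The missing ingredient is the logarithmic discrete Sobolev inequality
\begin{equation*}
\|\grad{}\hat v\|_{L^{\infty}(\Dom^{1})}^{2}\le C\,(1+|\ln h|)\bigl(\|\hat v\|_{2,h,1}^{2}+\|\grad{}\hat v\|_{L^{2}(\Dom^{1})}^{2}\bigr),
\end{equation*}
(see \cite[eqn.~(4)]{Brenner2004}), which combined with $\|1\|_{L^{2}(\strip^{1})}\le C h^{1/2}$ and the Poincar\'e inequality \cref{eqn:scalar_H2_h_norm_trace_Poincare} yields $\|\grad{}\hat v\|_{L^{2}(\strip^{1})}\le C h^{1/2-\epsilon}\|\hat v\|_{2,h,1}$. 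Only then does the remainder become $O(h^{1/2-\epsilon})\|\hat{\bvarphi}\|_{0,h,1}\|\hat v\|_{2,h,1}$ and get absorbed for $h$ small. Without this (or an equivalent quantitative bound on $\grad{}\hat v$ in the boundary strip), your perturbation argument does not produce a constant that tends to zero with $h$, and the inf-sup constant cannot be preserved. The rest of your outline (boundary-condition handling of the $\grad{}\Ilag_{h}^{1,1}\hat v$ edge term, the $O(h)$ bulk terms, the final absorption) matches the paper and is fine.
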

\begin{proof}
We start with the case $m=1$ in \cite[Lem. 5.1]{Blum_CM1990}:
\begin{equation}\label{eqn:mesh_depend_b_inf_sup_flat_domain_pf_1}
\begin{split}
\sup_{\hat{\bvarphi} \in \V_h^{1}} \frac{|\bfman{1}{\hat{\bvarphi}}{\hat{v}}|}{\| \hat{\bvarphi} \|_{0,h,1}}
\geq C_0 \| \hat{v} \|_{2,h,1}, \quad \forall \hat{v} \in \W_h^{1}, ~ \forall h > 0,
\end{split}
\end{equation}
on the piecewise linear domain $\Dom^{1}$ with triangulation $\TkDom{h}{1}$, and holds for any degree $r \geq 0$ of the HHJ space.

Consider the map $\bPhi_{T} : T^{1} \to T^{m}$ from \cref{thm:mapping_forms} with approximation properties given by \cref{eqn:curved_elem_map_optim}.
Let $v \in \W_h^{m}$, so by definition, there exists $\hat{v} \in \W_h^{1}$ such that $v \circ \bPhi_{T} = \hat{v}$ on each $T^{1} \in \TkDom{h}{1}$.  By \cref{eqn:equiv_norms_zero_bdy}, $\| v \|_{2,h,m} \approx \| \hat{v} \|_{2,h,1}$.  By \cref{eqn:matrix_Piola_transf_contra} (using $\bPhi_{T}$), for any $\bvarphi \in \V_h^{m}$, there exists $\hat{\bvarphi} \in \V_h^{1}$, such that $\| \bvarphi \|_{0,h,m} \approx \| \hat{\bvarphi} \|_{0,h,1}$, by \cref{eqn:HHJ_tensor_norm_equiv}.

We will use \cref{eqn:mapping_b_h_form} to estimate $\left| \bfman{m}{\bvarphi}{v} \right|$ when $l=1$.  Upon recalling the norm \cref{eqn:scalar_H2_h_norm}, because of boundary conditions, we have that the last term in \cref{eqn:mapping_b_h_form} bounds as
\begin{equation}\label{eqn:mesh_depend_b_inf_sup_flat_domain_pf_2}
\begin{split}
	O(h) \sum_{E^{1} \in \EkBdyDom{h}{1}} \| \hatvarphinn \|_{L^2(E^{1})} \| \hat{\bcn} \cdot \grad{} \Ilag_{h}^{1,1} \hat{v} \|_{L^2(E^{1})} \leq C h \| \hat{\bvarphi} \|_{0,h,1} \| \hat{v} \|_{2,h,1}.
\end{split}
\end{equation}
Moreover, applying basic estimates to \cref{eqn:mapping_b_h_form} yields
\begin{equation}\label{eqn:mesh_depend_b_inf_sup_flat_domain_pf_3}
\begin{split}
	\bfman{m}{\bvarphi}{v} \geq \bfman{1}{\hat{\bvarphi}}{\hat{v}} - C h \| \hat{\bvarphi} \|_{0,h,1} \| \hat{v} \|_{2,h,1} - C \| \hat{\bvarphi} \|_{L^2(\strip^{1})} \| \grad{} \hat{v} \|_{L^2(\strip^{1})}.
\end{split}
\end{equation}
Since $\| \grad{} \hat{v} \|_{L^2(\strip^{1})} \leq \| \grad{} \hat{v} \|_{L^\infty(\Dom^{1})} \| 1 \|_{L^2(\strip^{1})} \leq  \| \grad{} \hat{v} \|_{L^\infty(\Dom^{1})} |\BdyDom|^{1/2} h^{1/2}$, by the quasi-uniform mesh assumption, and $\| \grad{} \hat{v} \|_{L^{\infty}(\Dom^{1})}^2 \leq C (1 + \ln h) (\| \hat{v} \|_{2,h,1}^2 + \| \grad{} \hat{v} \|_{L^2(\Dom^{1})}^2)$ (see \cite[eqn. (4)]{Brenner2004}), we get
\begin{equation}\label{eqn:mesh_depend_b_inf_sup_flat_domain_pf_4}
\begin{split}
	\bfman{m}{\bvarphi}{v} \geq \bfman{1}{\hat{\bvarphi}}{\hat{v}} - C h^{1/2 - \epsilon} \| \hat{\bvarphi} \|_{0,h,1} \| \hat{v} \|_{2,h,1},
\end{split}
\end{equation}
for some small $\epsilon > 0$.  Dividing by $\| \bvarphi \|_{0,h,m}$ and using equivalence of norms, we get
\begin{equation}\label{eqn:mesh_depend_b_inf_sup_flat_domain_pf_5}
\begin{split}
	\frac{\bfman{m}{\bvarphi}{v}}{ \| \bvarphi \|_{0,h,m}} \geq \frac{\bfman{1}{\hat{\bvarphi}}{v}}{ \| \hat{\bvarphi} \|_{0,h,1}} - C h^{1/2 - \epsilon} \| \hat{v} \|_{2,h,1}.
\end{split}
\end{equation}
Taking the supremum, using \cref{eqn:mesh_depend_b_inf_sup_flat_domain_pf_1}, and equivalence of norms, proves \cref{eqn:mesh_depend_b_inf_sup_flat_domain} when $h$ is sufficiently small.
\end{proof}

\begin{remark}\label{rem:inf_sup_W_norm}
By \cref{eqn:scalar_H2_h_norm_trace_Poincare}, \cref{eqn:mesh_depend_b_inf_sup_flat_domain} holds with $\| v \|_{2,h,m}$ replaced by $| v |_{H^{1}(\Dom^{m})}$ with a different inf-sup constant.
\end{remark}

Therefore, \cref{eqn:bounded_a_form}, \cref{eqn:bounded_b_form}, \cref{eqn:a_coercive_over_kernel_b_h}, and \cref{eqn:mesh_depend_b_inf_sup_flat_domain} imply by the standard theory of mixed methods that \cref{eqn:Kirchhoff_manifold_HHJ_mixed} is well-posed in the mesh dependent norms.

\section{Error Analysis}\label{sec:error_analysis}

We now prove convergence of the HHJ method while accounting for the approximation of the domain using the theory of curved elements described in \cref{sec:curved_FEM}.
The main difficulties are dealing with higher derivatives of the nonlinear map and handling the jump terms in the mesh dependent norms when affected by a nonlinear map. The key ingredients here are \cref{thm:curved_elem_map_estimates}, \cref{eqn:b_h_form_interp_oper_Fortin}, and the following crucial choice of optimal map: let $\widetilde{\MapT{T}^{m}} : T^{1} \to T^{m}$, for all $T^{1} \in \TkDom{h}{1}$ and $1 \leq m \leq k$, be given by
\begin{equation}\label{ass:HHJ_Lag_interp_optimal_map}
\begin{split}
	\MapT{T}^{m} \equiv \widetilde{\MapT{T}^{m}} := \Ilag_{h}^{1,m} \MapT{T}^{} \equiv \Ilag_{h}^{1,m} \bPsi_{T}^{1},
\end{split}
\end{equation}
where $\Ilag_{h}^{1,m}$ is the Lagrange interpolation operator in \cref{eqn:scalar_space_interp_oper} onto degree $m$ polynomials, and we abuse notation by writing $\MapT{T}^{m} \equiv \widetilde{\MapT{T}^{m}}$.

\begin{remark}\label{rem:choice_optimal_map}
Note that $\widetilde{\MapT{T}^{m}}$ is an optimal map because of the approximation properties of $\Ilag_{h}^{1,m}$; hence, the results of \cref{thm:curved_elem_map_estimates} apply to $\widetilde{\MapT{T}^{m}}$.  This choice is necessary to guarantee optimal convergence of the HHJ method when $m=r+1$.  If $m > r+1$, the standard Lenoir map suffices.
\end{remark}

In deriving the error estimates, we use the following regularity result for the Kirchhoff plate problem (see \cite[Thm. 2]{Blum_MMAS1980}, \cite[Table 1]{Blum_CM1990}).
\begin{theorem}\label{thm:Kirchhoff_regularity}
Assume $\Dom$ satisfies the assumptions in \cref{sec:bdy_assume} and let $f \in H^{-1}(\Dom)$.  Then the weak solution $w \in \SW$ of \cref{eqn:weak_form} always satisfies $w \in W^{3,p}(\Dom)$ for some value of $p \in (p_0,2]$, where $1 \leq p_0 < 2$ depends on the angles at the corners of $\Dom$.
\end{theorem}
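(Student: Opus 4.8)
The plan is to recognize \cref{thm:Kirchhoff_regularity} as an instance of the classical corner-regularity theory for the biharmonic operator and to reduce it to the analysis in \cite{Blum_MMAS1980}. First I would observe that $\DivDiv{}(\belasten\hess{} v) = \bendmod\,\Delta^2 v$ for every $v$, since $\Div{}(\hess{} v) = \Div{}((\Delta v)\eye) = \grad{}\Delta v$ and $\belasten$ has the form in \cref{eqn:plate_eqs}; hence the weak solution $w$ of \cref{eqn:weak_form} is a weak solution of the biharmonic equation $\bendmod\,\Delta^2 w = f$ in $\Dom$, supplemented by the mixed conditions \cref{eqn:Kirchhoff_plate_BCs_general}, with the Poisson ratio $\poisson$ entering only through the natural condition $\signn = 0$ on $\Bsimsupp$. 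Along each open $C^{k+1}$ arc $\zeta\in\smoothcurve$ the boundary conditions satisfy the Shapiro--Lopatinski complementing condition for $\Delta^2$, so away from the corners the Agmon--Douglis--Nirenberg shift theorem upgrades $f\in H^{-1}$ to $w\in H^3$ locally, up to the smooth part of the boundary; the only obstruction to global $H^3 = W^{3,2}$ regularity is thus the finitely many corners.

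Next I would carry out the local analysis at each corner by Kondrat'ev's technique. Fixing the $i$th corner, freezing the smooth coefficients there and passing to polar coordinates $(r,\theta)$, the homogeneous principal part produces an operator pencil $\cA_i(\lambda)$ acting in $\theta$ on the sector of opening $\alpha_i$, with the two incident boundary conditions (one of clamped--clamped, clamped--simply-supported, or simply-supported--simply-supported). Its spectrum is the discrete set of roots $\{\lambda_j^{(i)}\}$ of an explicit transcendental equation in $\lambda$ depending on $\alpha_i$, and in the cases involving $\Bsimsupp$ also on $\poisson$, since there the system does not split into two Laplacians. A Mellin-transform/residue argument then yields, in a punctured neighborhood of the $i$th corner, the decomposition
\begin{equation*}
	w = w_{\mathrm{reg}} + \sum_{j} c_{j}\, r^{\lambda_j^{(i)}}\,\Phi_j(\theta), \qquad \text{(with possible } \log r \text{ factors at coincident exponents),}
\end{equation*}
where $w_{\mathrm{reg}}$ retains the $H^3$ regularity from the previous step and the sum runs over the finitely many exponents with $1 < \mathrm{Re}\,\lambda_j^{(i)} \le 2$; the lower bound $\mathrm{Re}\,\lambda_j^{(i)} > 1$ is consistent with (and in fact forced by) $w\in H^2$.

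To conclude, I would translate this into $W^{3,p}$ membership: a singular term $r^{\lambda}\Phi(\theta)$ lies in $W^{3,p}$ of a corner neighborhood if and only if $p(\mathrm{Re}\,\lambda - 3) > -2$, i.e. $p < 2/(3 - \mathrm{Re}\,\lambda)$ when $\mathrm{Re}\,\lambda < 3$, and since $\mathrm{Re}\,\lambda_j^{(i)} > 1$ one has $2/(3-\mathrm{Re}\,\lambda_j^{(i)}) > 1$. Combining with the interior/boundary estimate (valid for all $p\le 2$) and a partition of unity subordinate to the corners, $w\in W^{3,p}(\Dom)$ for every $p$ with $1 < p \le \min\bigl(2,\ \min_i 2/(3-\mathrm{Re}\,\lambda_{\min}^{(i)})\bigr)$, which is a nonempty interval; taking $p_0$ to be any angle-dependent number strictly below this maximal value yields the statement, with $p_0 = 1$ and $p = 2$ admissible when $\BdyDom$ is smooth.

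The main obstacle is the spectral analysis of the pencils $\cA_i(\lambda)$: one must verify that no eigenvalue lies on the critical line $\mathrm{Re}\,\lambda = 1$ and locate $\mathrm{Re}\,\lambda_{\min}^{(i)}$ as a function of $\alpha_i$ and the boundary-condition pair, treating carefully the coupled, $\poisson$-dependent simply-supported and mixed cases in which the biharmonic does not decouple into two Dirichlet Laplace problems, as well as the logarithmic enhancements at coincident exponents. These computations, and the resulting admissible values of $p_0$, are precisely what is established in \cite{Blum_MMAS1980} (see also \cite[Table 1]{Blum_CM1990}); the remaining ingredients --- the interior shift theorem and the gluing --- are standard, so I would simply invoke those references for the conclusion.
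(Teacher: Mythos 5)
Your proposal is correct and ends up in the same place as the paper: the paper does not prove this theorem but simply quotes it from \cite[Thm.~2]{Blum_MMAS1980} and \cite[Table~1]{Blum_CM1990}, and your sketch of the reduction to $\bendmod\,\Delta^2 w = f$, the Kondrat'ev corner expansion, and the $W^{3,p}$ integrability count $p<2/(3-\mathrm{Re}\,\lambda)$ is an accurate account of what those references establish, with the hard spectral analysis of the pencils correctly deferred to them. No discrepancy with the paper's treatment.
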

For technical reasons, we also assume $p > 3/2$ in \cref{thm:Kirchhoff_regularity} (recall \cref{rem:assume_p_3/2}).
Higher regularity (e.g., $w \in H^3(\Dom)$) is achieved if the corner angles are restricted.  In addition, if $f \in L^2(\Dom)$, then $w \in H^4(\Dom)$.  See \cite{Blum_MMAS1980,Blum_CM1990} for more details.

\subsection{Estimate the PDE Error}\label{sec:estimate_PDE_error}

We start with an error estimate that ignores the geometric error, i.e., the continuous and discrete problems are posed on the exact domain.
\begin{theorem}\label{thm:err_estim_exact_domain}
Adopt the boundary assumptions in \cref{sec:bdy_assume}. Let $\bsigma \in \V^{}$ and $w \in \W^{}$ solve \cref{eqn:Kirchhoff_manifold_skeleton_mixed} on the true domain $\Dom$, and assume $w \in W^{t,p}(\Dom)$, so then $\bsigma \in W^{t-2,p}(\Dom;\symmat)$, $t \geq 3$, $3/2 < p \leq 2$ (recall \cref{thm:Kirchhoff_regularity}).  Furthermore, let $r \geq 0$ be the degree of $\V_{h}$, and let $\bsigma_{h} \in \V_h^{}$, $w_{h} \in \W_h^{}$ be the discrete solution of \cref{eqn:Kirchhoff_manifold_HHJ_mixed} on $\Dom$. Then, we obtain
\begin{equation}\label{eqn:err_estim_exact_domain}
\begin{split}
\| \bsigma - \bsigma_{h} \|_{0,h} + \| \grad{} (w - w_{h}) \|_{L^2(\Dom)} &\leq C h^{\min(r+2,t-1) - 2/p}, \\
\text{ when } r \geq 1: \quad \| w - w_{h} \|_{2,h} &\leq C h^{\min(r+1,t-1)-2/p}, \\
\text{ when } r = 0: \quad \| \grad{} (w - w_{h}) \|_{L^2(\Dom)} &\leq C h,
\end{split}
\end{equation}
where $C > 0$ depends on $f$, the domain $\Dom$, and the shape regularity of the mesh.
\end{theorem}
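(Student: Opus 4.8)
The plan is to follow the Babu\v ska--Blum framework, but with the exact Fortin identity \cref{eqn:b_h_form_interp_oper_Fortin}, which fails on the curvilinear elements meeting $\BdyDom$, replaced there by the quantitative bound of \cref{lem:b_h_form_curved_Fortin_oper}. Since the exact solution $(\bsigma,w)$ satisfies \cref{eqn:Kirchhoff_manifold_skeleton_mixed} for \emph{all} test pairs in $\SV_{h}\times\SW_{h}$, and $\V_{h}\subset\SV_{h}$, $\W_{h}\subset\SW_{h}$, subtracting \cref{eqn:Kirchhoff_manifold_HHJ_mixed} gives the Galerkin orthogonality
\begin{equation*}
\afman{}{\bsigma-\bsigma_{h}}{\bvarphi}+\bfman{}{\bvarphi}{w-w_{h}}=0\ \ (\forall\bvarphi\in\V_{h}),\qquad \bfman{}{\bsigma-\bsigma_{h}}{v}=0\ \ (\forall v\in\W_{h}).
\end{equation*}
I would not invoke the abstract Brezzi estimate directly, since that ties the error in $\bsigma$ to the coarser displacement approximation; instead the Fortin structure of $\IHHJ_{h}$ is used to decouple the stress bound. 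Throughout write $\bsigma_{I}:=\IHHJ_{h}\bsigma\in\V_{h}$, $w_{I}:=\Ilag_{h}w\in\W_{h}$.

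\emph{Step 1: stress error and $\grad{}(w-w_{h})$.} Testing the first orthogonality relation with $\bvarphi=\bsigma_{I}-\bsigma_{h}$ and the second with $v=w_{I}-w_{h}$ yields
\begin{equation*}
\afman{}{\bsigma_{I}-\bsigma_{h}}{\bsigma_{I}-\bsigma_{h}}=\afman{}{\bsigma_{I}-\bsigma}{\bsigma_{I}-\bsigma_{h}}-\bfman{}{\bsigma_{I}-\bsigma_{h}}{w-w_{I}} .
\end{equation*}
The cross term $\bfman{}{\bsigma_{I}-\bsigma_{h}}{w-\Ilag_{h}w}$ vanishes on every affine interior element by \cref{eqn:b_h_form_interp_oper_Fortin}, and on the boundary strip $\strip$ it is controlled by the first inequality of \cref{lem:b_h_form_curved_Fortin_oper}; since $\strip$ has measure $O(h)$ and the bound carries an extra interpolation-error factor, this contribution is of higher order. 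Using coercivity \cref{eqn:a_coercive_over_kernel_b_h}, dividing by $\|\bsigma_{I}-\bsigma_{h}\|_{0,h}$, and the triangle inequality reduces the stress error to $\|\bsigma-\IHHJ_{h}\bsigma\|_{0,h}$ plus higher-order strip terms. I would then apply the $W^{t,p}$ interpolation estimates for $\IHHJ_{h}$ from the supplementary material, together with the local scaling $\|\cdot\|_{L^{2}(T)}\le C h_{T}^{1-2/p}\|\cdot\|_{L^{p}(T)}$ summed over the quasi-uniform mesh (using $\ell^{p}\hookrightarrow\ell^{2}$ for $p\le 2$), which gives $\|\bsigma-\IHHJ_{h}\bsigma\|_{0,h}\le Ch^{\min(r+2,t-1)-2/p}$ and hence the first bound. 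For $\|\grad{}(w-w_{h})\|_{L^{2}(\Dom)}$ I would use the $H^{1}$-form of the inf--sup condition (\cref{rem:inf_sup_W_norm}): bound $\|\grad{}(w_{I}-w_{h})\|_{L^{2}}$ by $C\sup_{\bvarphi\in\V_{h}}\bfman{}{\bvarphi}{w_{I}-w_{h}}/\|\bvarphi\|_{0,h}$, split $w_{I}-w_{h}=(w_{I}-w)+(w-w_{h})$, note the second part equals $-\afman{}{\bsigma-\bsigma_{h}}{\bvarphi}$ (already bounded), and observe the first is again annihilated by Fortin on affine elements and handled by \cref{lem:b_h_form_curved_Fortin_oper} on $\strip$; adding $\|\grad{}(w-w_{I})\|_{L^{2}}$ gives the stated rate.

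\emph{Step 2: displacement error in $\|\cdot\|_{2,h}$ for $r\ge1$, and the case $r=0$.} For $r\ge1$ I would use the full inf--sup condition \cref{lem:mesh_depend_b_inf_sup_flat_domain} to bound $\|w_{I}-w_{h}\|_{2,h}$ by $C\sup_{\bvarphi\in\V_{h}}\bfman{}{\bvarphi}{w_{I}-w_{h}}/\|\bvarphi\|_{0,h}$, estimated exactly as in Step~1 (the right-hand side is at least as small as $h^{\min(r+1,t-1)-2/p}$, using the Step~1 stress bound); adding the Lagrange interpolation error $\|w-w_{I}\|_{2,h}$, whose dominant part is precisely $h^{\min(r+1,t-1)-2/p}$, yields the second inequality. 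For $r=0$ this seminorm does not tend to zero (the interpolant is piecewise linear), so no such bound is claimed; the sharper bound $\|\grad{}(w-w_{h})\|_{L^{2}}\le Ch$ — which improves on Step~1 when $p<2$ — would be obtained by a separate, lowest-order-specific argument. The key point there is that $W^{3,p}(\Dom)\hookrightarrow H^{2}(\Dom)$ for every admissible $p$, so continuous piecewise linears approximate $\grad{}w$ in $L^{2}$ to $O(h)$ irrespective of $p$; combining this with the discrete stability and showing that the remaining consistency/dual contributions (again split by Fortin into affine and strip parts via \cref{eqn:b_h_form_interp_oper_Fortin} and \cref{lem:b_h_form_curved_Fortin_oper}) are also $O(h)$ delivers the claim.

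\emph{Expected main obstacle.} Two issues are delicate. First, the reduced elliptic regularity $w\in W^{t,p}$ with $p<2$ (\cref{thm:Kirchhoff_regularity}) forces one to carry the Sobolev exponent $2/p$ through every interpolation estimate and every local-to-global passage, using $L^{p}$-based rather than $L^{2}$-based bounds, so that exactly the powers $\min(r+1,t-1)-2/p$ and $\min(r+2,t-1)-2/p$ are recovered; the lowest-order case is the worst in this respect and is what necessitates the separate argument above. Second, the exact Fortin relation \cref{eqn:b_h_form_interp_oper_Fortin} fails on the curved elements along $\BdyDom$ — this is the very reason \cref{lem:b_h_form_curved_Fortin_oper} was proved, and in each of its uses above one must verify that the extra boundary-strip terms it generates are genuinely of higher order, which holds because they are supported on a set of measure $O(h)$ and always carry an additional interpolation-error factor.
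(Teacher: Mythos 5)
Your strategy is the one the paper itself uses (and relegates to its supplement): the standard Babu\v{s}ka/Blum--Rannacher mixed-method error analysis --- Galerkin orthogonality, the Fortin identity \cref{eqn:b_h_form_interp_oper_Fortin} to decouple the stress error from the coarser displacement approximation, coercivity \cref{eqn:a_coercive_over_kernel_b_h}, the inf-sup condition \cref{lem:mesh_depend_b_inf_sup_flat_domain}, and $W^{t,p}\to L^2$ interpolation estimates with the $h^{1-2/p}$ exponent shift --- with the curved boundary triangles of the exact domain handled by \cref{lem:b_h_form_curved_Fortin_oper}. One bookkeeping slip: your Step~1 identity drops a term. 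The correct version is
\begin{equation*}
\afman{}{\bsigma_{I}-\bsigma_{h}}{\bsigma_{I}-\bsigma_{h}}=\afman{}{\bsigma_{I}-\bsigma}{\bsigma_{I}-\bsigma_{h}}-\bfman{}{\bsigma_{I}-\bsigma_{h}}{w-w_{I}}-\bfman{}{\bsigma_{I}-\bsigma}{w_{I}-w_{h}},
\end{equation*}
the last term arising because $\bfman{}{\bsigma_{I}-\bsigma_{h}}{w_{I}-w_{h}}=\bfman{}{\bsigma_{I}-\bsigma}{w_{I}-w_{h}}$ by the second orthogonality relation. It is killed on interior (affine) elements by the first line of \cref{eqn:b_h_form_interp_oper_Fortin} and controlled on $\strip$ by the second inequality of \cref{eqn:b_h_form_curved_Fortin_oper}, but that bound involves $\|\grad{}(w_{I}-w_{h})\|_{L^{2}(\strip)}$, so the stress and displacement estimates are not actually decoupled on the curved domain; one must run the inf-sup bound for the displacement first and absorb the resulting term by a weighted Cauchy / kick-back step, exactly as in \cref{eqn:err_estim_exact_discrete_pf_1}--\cref{eqn:err_estim_exact_discrete_pf_2}.

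The genuine gap is the $r=0$ estimate $\|\grad{}(w-w_{h})\|_{L^{2}(\Dom)}\le Ch$, which is a separate claim of the theorem (the paper notes it generalizes \cite[Thm.~5.1]{Blum_CM1990}). Your inf-sup route bounds $\|\grad{}(w_{I}-w_{h})\|_{L^{2}}$ by $\sup_{\bvarphi\in\V_h}|\bfman{}{\bvarphi}{w_{I}-w_{h}}|/\|\bvarphi\|_{0,h}$, and after inserting the first orthogonality relation the dominant contribution is $\afcont\|\bsigma-\bsigma_{h}\|_{0,h}$, which for $r=0$, $t=3$ is only $O(h^{2-2/p})$ --- strictly worse than $O(h)$ for every $p<2$. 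The embedding $W^{3,p}(\Dom)\hookrightarrow H^{2}(\Dom)$ repairs only the interpolation term $\|\grad{}(w-w_{I})\|_{L^{2}}$, not this consistency term, so ``discrete stability'' plus the embedding cannot deliver the claimed rate. What is missing is the duality argument of Blum--Rannacher: one tests the error equations against the solution of an adjoint plate problem so that $\bsigma-\bsigma_{h}$ enters only through weaker (negative-norm) functionals, which converge at the full rate $O(h)$ despite the reduced regularity. Your phrase ``remaining consistency/dual contributions'' points in that direction but, as written, the step that would fail is precisely the passage from the $O(h^{2-2/p})$ stress error to an $O(h)$ displacement-gradient error.
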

\begin{proof}
With coercivity and the inf-sup condition in hand, the proof is a standard application of error estimates for mixed methods and is given in section SM6. 
\end{proof}
Note that the last line of \cref{eqn:err_estim_exact_domain} generalizes \cite[Thm. 5.1]{Blum_CM1990} to curved domains.

\subsection{Estimate the Geometric Error}\label{sec:estimate_Geom_error}

We now approximate the domain using curved, Lagrange mapped triangle elements.
\begin{lemma}\label{lem:err_estim_exact_discrete}
Recall the map $\bPsi^{m} : \Dom^{m} \to \Dom$, with $\bPsi_{T}^{m} := \bPsi^{m} |_{T}$, from \cref{sec:curved_triangulations}, and adopt \cref{ass:HHJ_Lag_interp_optimal_map}. For convenience, set $\bJ = \grad{} \bPsi^{m}$. 
Adopt the boundary assumptions in \cref{sec:bdy_assume}.  Let $\hat{\bsigma}_{h} \in \V_h^{m}$, $\hat{w}_{h} \in \W_h^{m}$ be the discrete solution of \cref{eqn:Kirchhoff_manifold_HHJ_mixed}, with $f$ replaced by $\tilde{f} := f \circ \bPsi^{m} (\det \bJ)$.  Take $(\bsigma_{h},w_{h})$ from \cref{thm:err_estim_exact_domain}, and let $\widetilde{\bsigma}_{h} \in \V_h^{m}$, $\tilde{w}_{h} \in \W_h^{m}$ be the mapped discrete solutions onto $\Dom^{m}$ using \cref{eqn:matrix_Piola_transf_contra}.  In other words, $\bsigma_{h} \circ \bPsi^{m} = (\det \bJ)^{-2} \bJ \widetilde{\bsigma}_{h} \bJ\tp$ and $\tilde{w}_{h} = w_{h} \circ \bPsi^{m}$, defined element-wise.  Similarly, we map the test functions $\bvarphi_{h} \in \V_h^{}$, $v_{h} \in \W_h^{}$ to $\hat{\bvarphi}_{h} \in \V_h^{m}$, $\hat{v}_{h} \in \W_h^{m}$.
Then, we obtain the error equations for the geometric error:
\begin{equation}\label{eqn:exact_discrete_error_eqn}
\afman{m}{\widetilde{\bsigma}_{h} - \hat{\bsigma}_{h}}{\hat{\bvarphi}_{h}} +
\bfman{m}{\hat{\bvarphi}_{h}}{\tilde{w}_{h} - \hat{w}_{h}} +
\bfman{m}{\widetilde{\bsigma}_{h} - \hat{\bsigma}_{h}}{\hat{v}_{h}} = \mathrm{E}_{0} (\hat{\bvarphi}_{h},\hat{v}_{h}),
\end{equation}
for all $(\hat{v}_{h}, \hat{\bvarphi}_{h}) \in \W^{m}_h \times \V^{m}_h$, where
\begin{equation}\label{eqn:geometric_consistency_error}
| \mathrm{E}_{0} (\hat{\bvarphi}_{h},\hat{v}_{h}) | \leq C h^{q} \left( \| \hat{\bvarphi}_{h} \|_{0,h,m} + \| \hat{v}_{h} \|_{2,h,m} \right) \| f \|_{H^{-1}(\Dom)},
\end{equation}
where $q = m$ when $m=r+1$, otherwise $q = m-1$.
\end{lemma}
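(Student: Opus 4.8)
The plan is to derive \cref{eqn:exact_discrete_error_eqn} by transporting the discrete problem on $\Dom$ to $\Dom^{m}$, subtracting the discrete problem on $\Dom^{m}$ with data $\tilde{f}$, and then identifying and bounding the geometric residual $\mathrm{E}_{0}$. Since $(\bsigma_{h},w_{h})$ solves \cref{eqn:Kirchhoff_manifold_HHJ_mixed} with $m=\infty$, I test its two equations against the mapped functions $(\hat{\bvarphi}_{h},\hat{v}_{h})$ and invoke \cref{thm:mapping_forms} with the roles ``$l$''$=m$ and ``$m$''$=\infty$, so that $\bPhi^{lm}=\bPsi^{m}$ and $\bJ=\grad{}\bPsi^{m}$. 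This rewrites $\afman{\infty}{\bsigma_{h}}{\bvarphi_{h}}$, $\bfman{\infty}{\bvarphi_{h}}{w_{h}}$, $\bfman{\infty}{\bsigma_{h}}{v_{h}}$ in terms of $\afman{m}{\cdot}{\cdot}$ and $\bfman{m}{\cdot}{\cdot}$ evaluated at $(\widetilde{\bsigma}_{h},\tilde{w}_{h})$ and $(\hat{\bvarphi}_{h},\hat{v}_{h})$, up to the explicit correction terms of \cref{eqn:mapping_a_form}--\cref{eqn:mapping_b_h_form}. The load term is unaffected: the change of variables $\vx=\bPsi^{m}(\hat{\vx})$ and the definition $\tilde{f}=f\circ\bPsi^{m}(\det\bJ)$ give $\duality{f}{v_{h}}{\Dom}=\duality{\tilde{f}}{\hat{v}_{h}}{\Dom^{m}}$ \emph{exactly}. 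Adding the two transported equations and subtracting the two equations of \cref{eqn:Kirchhoff_manifold_HHJ_mixed} on $\Dom^{m}$ solved by $(\hat{\bsigma}_{h},\hat{w}_{h})$ then yields \cref{eqn:exact_discrete_error_eqn} with $\mathrm{E}_{0}(\hat{\bvarphi}_{h},\hat{v}_{h})$ equal to (minus) the sum of the correction terms of \cref{thm:mapping_forms}.

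It remains to bound $\mathrm{E}_{0}$. The contribution of \cref{eqn:mapping_a_form} is already $O(h^{m})\|\widetilde{\bsigma}_{h}\|_{L^{2}(\strip^{m})}\|\hat{\bvarphi}_{h}\|_{L^{2}(\strip^{m})}$. For the $b_{h}$-corrections I first note that the volume-Hessian and element-boundary terms of \cref{eqn:mapping_b_h_form} reassemble \emph{exactly} into $\bfman{m}{\hat{\bvarphi}_{h}}{\psi}$, where $\psi:=(\ident{T^{m}}-\bPsi_{T}^{m})\cdot\CProj\grad{}\tilde{w}_{h}$ on $\Dom^{m}$ (and, for the second equation, into $\bfman{m}{\widetilde{\bsigma}_{h}}{\psi'}$ with $\psi':=(\ident{T^{m}}-\bPsi_{T}^{m})\cdot\CProj\grad{}\hat{v}_{h}$); this uses that $\bPsi_{T}^{m}$ restricts to the identity on interior edges, so $\psi$ vanishes there and the normal-normal components of $\hat{\bvarphi}_{h}$, $\widetilde{\bsigma}_{h}$ are single-valued across the skeleton. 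The leftover correction terms of \cref{eqn:mapping_b_h_form} are treated exactly as in the proof of \cref{lem:b_h_form_curved_Fortin_oper}, via inverse estimates and \cref{eqn:curved_elem_map_optim}; here the boundary conditions built into $\V_{h}^{m}$, $\W_{h}^{m}$ are essential, since on the simply-supported part the relevant normal-normal traces vanish while on the clamped part $\|\bcn\cdot\grad{}\tilde{w}_{h}\|_{L^{2}}$ and $\|\bcn\cdot\grad{}\hat{v}_{h}\|_{L^{2}}$ are controlled by $h^{1/2}\|\cdot\|_{2,h,m}$, so that these remainders are $O(h^{m})(\|\hat{\bvarphi}_{h}\|_{0,h,m}+\|\hat{v}_{h}\|_{2,h,m})(\|\widetilde{\bsigma}_{h}\|_{0,h,m}+\|\tilde{w}_{h}\|_{2,h,m})$. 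When $m\ge r+2$ (so $q=m-1$) the crude bounds $\|\grad{}^{2}(\bPsi_{T}^{m}-\ident{T^{m}})\|_{L^{\infty}}\le Ch^{m-1}$, $\|\grad{}(\bPsi_{T}^{m}-\ident{T^{m}})\|_{L^{\infty}}\le Ch^{m}$ from \cref{eqn:curved_elem_map_optim}, together with $\|\CProj\grad{}\tilde{w}_{h}\|_{L^{2}}\le\|\grad{}\tilde{w}_{h}\|_{L^{2}}$, already give $|\bfman{m}{\hat{\bvarphi}_{h}}{\psi}|\le Ch^{m-1}\|\hat{\bvarphi}_{h}\|_{0,h,m}\|\grad{}\tilde{w}_{h}\|_{L^{2}(\strip^{m})}$, and similarly for $\psi'$, which suffices.

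The delicate case is $m=r+1$, which is where the optimal map \cref{ass:HHJ_Lag_interp_optimal_map} is indispensable and which I expect to be the main obstacle. I apply \cref{thm:mapping_forms} a second time, now to $\bfman{m}{\hat{\bvarphi}_{h}}{\psi}$ with ``$l$''$=1$, ``$m$''$=m$ (so $\bPhi^{1m}_{T}=\MapT{T}^{m}$): this produces $\bfman{1}{\check{\bvarphi}_{h}}{\hat{\psi}}$ plus correction terms, with $\check{\bvarphi}_{h}\in\HHJ_{h}^{1}$ the matrix-Piola pullback of $\hat{\bvarphi}_{h}$ (a degree-$r$ tensor), and, using $\bPsi_{T}^{m}\circ\MapT{T}^{m}=\bPsi_{T}^{1}$ and $\MapT{T}^{m}=\Ilag_{h}^{1,m}\bPsi_{T}^{1}$, $\hat{\psi}|_{T^{1}}=\psi|_{T^{m}}\circ\MapT{T}^{m}=-(\ident{T^{1}}-\Ilag_{h}^{1,m}\bPsi_{T}^{1})\cdot\bc=:-\chi$, where $\bc$ is the piecewise-constant value of $\CProj\grad{}\tilde{w}_{h}$ (with $|\bc|\le Ch^{-1}\|\grad{}\tilde{w}_{h}\|_{L^{2}(T^{m})}$). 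Now the key point: because $m=r+1$, $\Ilag_{h}^{1,m}$ is exactly the degree-$(r+1)$ Lagrange interpolant $\Ilag_{h}^{1}$ onto $\LAG_{h}^{1}$, so by idempotency $\Ilag_{h}^{1}\chi=\Ilag_{h}^{1,m}[(\ident{}-\Ilag_{h}^{1,m})\bPsi_{T}^{1}]\cdot\bc=0$, whence $\chi=(\ident{}-\Ilag_{h}^{1})\chi$, and the Fortin identity \cref{eqn:b_h_form_interp_oper_Fortin} (second line with $\theta_{h}\equiv1$) forces $\bfman{1}{\check{\bvarphi}_{h}}{-\chi}=\bfman{1}{\check{\bvarphi}_{h}}{\Ilag_{h}^{1}(-\chi)}=0$. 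The leftover corrections from this second transformation involve $\grad{}^{s}(\MapT{T}^{m}-\ident{T^{1}})$ and $\grad{}^{s}\chi$; using $\|\grad{}^{s}\chi\|_{L^{\infty}}\le Ch^{m+1-s}|\bc|$ (interpolation error of the piecewise $C^{k+1}$ map $\bPsi_{T}^{1}$, $m\le k$), $\|\grad{}(\MapT{T}^{m}-\ident{T^{1}})\|_{L^{\infty}}\le Ch$, and again the clamped-edge trace bounds, each such term is $O(h^{m})\|\check{\bvarphi}_{h}\|_{0,h,1}\|\grad{}\tilde{w}_{h}\|_{L^{2}(\strip^{m})}$; the same applies to $\bfman{m}{\widetilde{\bsigma}_{h}}{\psi'}$. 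Arranging the correction terms so this Fortin cancellation is exposed, and tracking the degrees so that $\Ilag_{h}^{1}\chi=0$ exactly when $m=r+1$, is the crux.

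Finally, I pass between norms on $\Dom$, $\Dom^{1}$, $\Dom^{m}$ using \cref{prop:equiv_norms} (in particular \cref{eqn:equiv_norms_zero_bdy}, valid since $\tilde{w}_{h}\in\zerobdy{H}^{1}(\Dom^{m})$) and \cref{eqn:HHJ_tensor_norm_equiv}, and use the a priori estimate \cref{eqn:Kirchhoff_manifold_HHJ_mixed_a_priori_est} for $(\bsigma_{h},w_{h})$ on $\Dom$ to get $\|\widetilde{\bsigma}_{h}\|_{0,h,m}+\|\tilde{w}_{h}\|_{2,h,m}\le C\|f\|_{H^{-1}(\Dom)}$ and $\|\grad{}\tilde{w}_{h}\|_{L^{2}(\strip^{m})}\le\|\tilde{w}_{h}\|_{2,h,m}\le C\|f\|_{H^{-1}(\Dom)}$. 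Collecting all contributions gives $|\mathrm{E}_{0}(\hat{\bvarphi}_{h},\hat{v}_{h})|\le Ch^{q}(\|\hat{\bvarphi}_{h}\|_{0,h,m}+\|\hat{v}_{h}\|_{2,h,m})\|f\|_{H^{-1}(\Dom)}$ with $q=m$ when $m=r+1$ and $q=m-1$ otherwise, which is \cref{eqn:geometric_consistency_error}.
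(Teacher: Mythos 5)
Your proposal is correct and follows essentially the same route as the paper: transport the exact-domain discrete equations via \cref{thm:mapping_forms}, reassemble the problematic corrections into $\bfman{m}{\cdot}{(\ident{}-\bPsi^{m})\cdot\CProj\grad{}(\cdot)}$, pull back to $\Dom^{1}$ so that the optimal map \cref{ass:HHJ_Lag_interp_optimal_map} exposes $(\MapT{}^{m}-\MapT{}{})\cdot\bc=-(I-\Ilag_{h}^{1,m})(\bPsi^{1}\cdot\bc)$, and invoke the Fortin identity \cref{eqn:b_h_form_interp_oper_Fortin} (with the piecewise-constant weight $\theta_{h}=\bc$) to kill this term when $m=r+1$, with crude $O(h^{m-1})$ bounds otherwise. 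The only cosmetic difference is that you re-apply \cref{thm:mapping_forms} with $l=1$ where the paper cites \cref{eqn:curved_elem_map_estimate_isolate} directly, and you spell out the idempotency argument $\Ilag_{h}^{1}\chi=0$ that the paper leaves implicit.
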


\begin{proof}
We will use \cref{eqn:mapping_b_h_form} with $m$, $l$ replaced by $\infty$, $m$, respectively. First, note that $(\ident{\Dom^{m}} - \bPsi^{m}) \cdot \CProj \grad{} \hat{v}_{h} \in H^{2}_{h}(\Dom^{m})$, because $(\ident{T^{m}} - \bPsi^{m}_{T})$ is zero at all internal edges, and is identically zero on all elements not in $\TkBdyDom{h}{m}$. 
Upon noting $v_{h} \in \zerobdy{H}^1 (\Dom)$ and  \cref{eqn:mesh_depend_b_inf_sup_flat_domain_pf_2}, straightforward manipulation gives
\begin{equation}\label{eqn:exact_discrete_error_eqn_pf_1}
\begin{split}
\bfman{}{\bvarphi_{h}}{v_{h}} &= \bfman{m}{\hat{\bvarphi}_{h}}{\hat{v}_{h}} + \bfman{m}{\hat{\bvarphi}_{h}}{(\ident{\Dom^{m}} - \bPsi^{m}) \cdot \CProj \grad{} \hat{v}_{h}} \\
&\qquad + O(h^{m}) \| \hat{\bvarphi}_{h} \|_{0,h,m} \| \hat{v}_{h} \|_{2,h,m}.
\end{split}
\end{equation}
Taking advantage of \cref{eqn:curved_elem_map_estimate_isolate}, we get
\begin{equation}\label{eqn:exact_discrete_error_eqn_pf_2}
\begin{split}
\bfman{}{\bvarphi_{h}}{v_{h}} &= \bfman{m}{\hat{\bvarphi}_{h}}{\hat{v}_{h}} + \bfman{1}{\check{\bvarphi}_{h}}{(\MapT{}^{m} - \MapT{}^{}) \cdot \CProj \grad{} \hat{v}_{h}} \\
&\qquad + O(h^{m}) \| \hat{\bvarphi}_{h} \|_{0,h,m} \| \hat{v}_{h} \|_{2,h,m},
\end{split}
\end{equation}
where $\check{\bvarphi}_{h} \in \V_h^{1}$ and $\MapT{}^{m} := \Ilag_{h}^{1,m} \MapT{}^{}$, by \cref{ass:HHJ_Lag_interp_optimal_map}.  If $m=r+1$, the Fortin property \cref{eqn:b_h_form_interp_oper_Fortin} yields $\bfman{1}{\check{\bvarphi}_{h}}{(\MapT{}^{m} - \MapT{}^{}) \cdot \CProj \grad{} \hat{v}_{h}} = 0$.  If $m \neq r + 1$, then a straightforward estimate shows $\bfman{1}{\check{\bvarphi}_{h}}{(\MapT{}^{m} - \MapT{}^{}) \cdot \CProj \grad{} \hat{v}_{h}} \leq C h^{m-1} \| \hat{\bvarphi}_{h} \|_{0,h,m} \| \hat{v}_{h} \|_{2,h,m}$, where we used equivalence of norms \cref{eqn:equiv_norms_zero_bdy,eqn:HHJ_tensor_norm_equiv}.

Therefore, using \cref{eqn:mapping_a_form} and \cref{eqn:mapping_b_h_form_part_2}, the first line in \cref{eqn:Kirchhoff_manifold_HHJ_mixed} (with $m=\infty$) maps to
\begin{equation}\label{eqn:Kirchhoff_discrete_mixed_mapped_E1}
\begin{split}
\afman{m}{\widetilde{\bsigma}_{h}}{\hat{\bvarphi}_{h}} + \bfman{m}{\hat{\bvarphi}_{h}}{\tilde{w}_{h}} &= \mathrm{I}_{1}, ~ \forall \hat{\bvarphi}_{h} \in \V_h^{m},
\end{split}
\end{equation}
where $1 \leq m \leq k$ and $C > 0$ is a constant depending only on $\Dom$ such that
\begin{equation}\label{eqn:geometric_consistency_terms_1}
\begin{split}
\mathrm{I}_1 &\leq C h^{q} \| \hat{\bvarphi}_{h} \|_{L^2(\strip^{m})} \left( \| \widetilde{\bsigma}_{h} \|_{L^2(\strip^{m})} + \| \tilde{w}_{h} \|_{2,h,m} \right),
\end{split}
\end{equation}
where $q$ was defined earlier. 
The second equation in \cref{eqn:Kirchhoff_manifold_HHJ_mixed} (with $m=\infty$) maps to
\begin{equation}\label{eqn:Kirchhoff_discrete_mixed_mapped_E2}
\begin{split}
\bfman{m}{\widetilde{\bsigma}_{h}}{\hat{v}_h} &= -\duality{f \circ \bPsi^{m} (\det \bJ)}{\hat{v}_h}{\Dom^{m}} + \mathrm{I}_2, \quad \forall \hat{v}_h \in \W_h^{m},
\end{split}
\end{equation}
where, for some constant $C > 0$ depending only on $\Dom$,
\begin{equation}\label{eqn:geometric_consistency_terms_2}
\begin{split}
\mathrm{I}_2 &\leq C h^{q} \| \widetilde{\bsigma}_{h} \|_{L^2(\strip^{m})} \| \hat{v}_{h} \|_{2,h,m}.
\end{split}
\end{equation}
Then, subtracting \cref{eqn:Kirchhoff_manifold_HHJ_mixed} (with $1 \leq m \leq k$) for the solution $(\hat{\bsigma}_{h},\hat{w}_{h})$ from the above equations, combining everything, and noting the a priori estimate \cref{eqn:Kirchhoff_manifold_HHJ_mixed_a_priori_est} gives \cref{eqn:exact_discrete_error_eqn,eqn:geometric_consistency_error}.
\end{proof}

\begin{theorem}\label{thm:err_estim_exact_discrete}
Adopt the hypothesis of \cref{lem:err_estim_exact_discrete}.  Then, the following error estimate holds
\begin{equation}\label{eqn:err_estim_exact_discrete}
\begin{split}
	\| \widetilde{\bsigma}_{h} - \hat{\bsigma}_{h} \|_{0,h,m} + \| \tilde{w}_{h} - \hat{w}_{h} \|_{2,h,m} &\leq C h^{q} \| f \|_{H^{-1}(\Dom)},
\end{split}
\end{equation}
for some uniform constant $C > 0$.
\end{theorem}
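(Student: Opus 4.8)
The statement is essentially a corollary of \cref{lem:err_estim_exact_discrete} together with the well-posedness of the discrete mixed problem. Observe that the error equations \cref{eqn:exact_discrete_error_eqn} express the pair $(\widetilde{\bsigma}_{h} - \hat{\bsigma}_{h}, \tilde{w}_{h} - \hat{w}_{h}) \in \V_h^m \times \W_h^m$ as the solution of a discrete saddle-point problem with \emph{exactly the same} bilinear forms $\afman{m}{\cdot}{\cdot}$ and $\bfman{m}{\cdot}{\cdot}$ that appear in \cref{eqn:Kirchhoff_manifold_HHJ_mixed}, the only difference being that the right-hand side is now the linear functional $(\hat{\bvarphi}_h,\hat{v}_h)\mapsto\mathrm{E}_0(\hat{\bvarphi}_h,\hat{v}_h)$ rather than $-\duality{f}{\cdot}{}$. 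The first step is therefore to invoke the standard stability estimate for mixed methods: since \cref{eqn:bounded_a_form,eqn:bounded_b_form,eqn:a_coercive_over_kernel_b_h,eqn:mesh_depend_b_inf_sup_flat_domain} hold with constants $\afcont$, $\bfcont$, $\afcoer$, $\bfcoer$ that are independent of $h$ and $m$, the Brezzi theory gives, exactly as in the derivation of \cref{eqn:Kirchhoff_manifold_HHJ_mixed_a_priori_est},
\begin{equation*}
\| \widetilde{\bsigma}_{h} - \hat{\bsigma}_{h} \|_{0,h,m} + \| \tilde{w}_{h} - \hat{w}_{h} \|_{2,h,m}
 \leq C \sup_{(\hat{\bvarphi}_h,\hat{v}_h)} \frac{| \mathrm{E}_0(\hat{\bvarphi}_h,\hat{v}_h) |}{\| \hat{\bvarphi}_h \|_{0,h,m} + \| \hat{v}_h \|_{2,h,m}},
\end{equation*}
with $C$ depending only on the four structural constants above; here we also use \cref{eqn:scalar_H2_h_norm_trace_Poincare} to pass freely between $\trinorm{\cdot}_{2,h,m}$ and $\| \cdot \|_{2,h,m}$ on $\W_h^m$.

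The second step is simply to feed in the consistency bound \cref{eqn:geometric_consistency_error} from \cref{lem:err_estim_exact_discrete}: it states precisely that the supremum on the right-hand side above is bounded by $C h^{q} \| f \|_{H^{-1}(\Dom)}$, with $q = m$ when $m = r+1$ and $q = m-1$ otherwise. Combining the two displays yields \cref{eqn:err_estim_exact_discrete} with a uniform constant, completing the proof.

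There is no real obstacle remaining here; all the substantive work — the form-mapping identities of \cref{thm:mapping_forms}, the choice of the optimal map \cref{ass:HHJ_Lag_interp_optimal_map} that activates the Fortin property \cref{eqn:b_h_form_interp_oper_Fortin}, and the isolation estimate \cref{eqn:curved_elem_map_estimate_isolate} — has already been carried out in \cref{lem:err_estim_exact_discrete}. The only point requiring a word of care is the uniformity of the constants in $m$: this is exactly what \cref{lem:mesh_depend_a_coercive_over_kernel_b_abstract,lem:mesh_depend_b_inf_sup_flat_domain} were stated to provide (the inf-sup constant being recovered from the $m=1$ case of \cite[Lem.~5.1]{Blum_CM1990} via the norm equivalences of \cref{prop:equiv_norms} and \cref{eqn:HHJ_tensor_norm_equiv}), so the abstract stability constant $C$ inherited from Brezzi's theorem is itself independent of $h$ and $m$.
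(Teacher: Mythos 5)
Your proposal is correct and follows essentially the same route as the paper: the paper's proof is just the Brezzi stability argument carried out by hand (first the inf-sup condition with $\hat{v}_h=0$ to bound $\|\tilde{w}_h-\hat{w}_h\|_{2,h,m}$ by the $\bsigma$-error plus the consistency term, then coercivity of $\afman{m}{\cdot}{\cdot}$ with the test pair $(\widetilde{\bsigma}_h-\hat{\bsigma}_h,\,-(\tilde{w}_h-\hat{w}_h))$ and a weighted Cauchy inequality), whereas you invoke the abstract stability estimate directly. Both rest on the same ingredients — \cref{eqn:bounded_a_form,eqn:bounded_b_form,eqn:a_coercive_over_kernel_b_h,eqn:mesh_depend_b_inf_sup_flat_domain} together with the bound \cref{eqn:geometric_consistency_error} on $\mathrm{E}_0$ — so the argument is sound.
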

\begin{proof}
From \cref{eqn:exact_discrete_error_eqn}, choose $\hat{v}_{h} = 0$ and use \cref{lem:mesh_depend_b_inf_sup_flat_domain} to get
\begin{equation}\label{eqn:err_estim_exact_discrete_pf_1}
\begin{split}
	\bfcoer \| \tilde{w}_{h} - \hat{w}_{h} \|_{2,h,m} &\leq \sup_{\hat{\bvarphi}_{h} \in \V_h^{m}} \frac{|\bfman{m}{\hat{\bvarphi}_{h}}{\tilde{w}_{h} - \hat{w}_{h}}|}{\| \hat{\bvarphi}_{h} \|_{0,h,m}} \\
	&\leq \sup_{\hat{\bvarphi}_{h} \in \V_h^{m}} \frac{|\afman{m}{\widetilde{\bsigma}_{h} - \hat{\bsigma}_{h}}{\hat{\bvarphi}_{h}}| + |\mathrm{E}_{0} (\hat{\bvarphi}_{h},0)|}{\| \hat{\bvarphi}_{h} \|_{0,h,m}} \\
	&\leq C \afcont \| \widetilde{\bsigma}_{h} - \hat{\bsigma}_{h} \|_{0,h,m} + C h^{q} \| f \|_{H^{-1}(\Dom)},
\end{split}
\end{equation}
where we used the norm equivalence \cref{eqn:HHJ_tensor_norm_equiv}.  Next, choose $\hat{\bvarphi}_{h} = \widetilde{\bsigma}_{h} - \hat{\bsigma}_{h}$ and $\hat{v}_{h} = -(\tilde{w}_{h} - \hat{w}_{h})$ in \cref{eqn:exact_discrete_error_eqn} to get
\begin{equation}\label{eqn:err_estim_exact_discrete_pf_2}
\begin{split}
	\afcoer \| \widetilde{\bsigma}_{h} - & \hat{\bsigma}_{h} \|_{L^2(\Dom^{m})}^2 \leq \afman{m}{\widetilde{\bsigma}_{h} - \hat{\bsigma}_{h}}{\widetilde{\bsigma}_{h} - \hat{\bsigma}_{h}} \\
	&\leq C h^{q} \left( \| \widetilde{\bsigma}_{h} - \hat{\bsigma}_{h} \|_{0,h,m} + \| \tilde{w}_{h} - \hat{w}_{h} \|_{2,h,m} \right) \| f \|_{H^{-1}(\Dom)} \\
	&\leq C h^{q} \left( \| \widetilde{\bsigma}_{h} - \hat{\bsigma}_{h} \|_{0,h,m} + C h^{q} \| f \|_{H^{-1}(\Dom)} \right) \| f \|_{H^{-1}(\Dom)} \\
	&\leq C (h^{q})^2 \| f \|_{H^{-1}(\Dom)}^2 + C h^{q} \| \widetilde{\bsigma}_{h} - \hat{\bsigma}_{h} \|_{L^2(\Dom^{m})} \| f \|_{H^{-1}(\Dom)} \\
	&\leq C (h^{q})^2 \| f \|_{H^{-1}(\Dom)}^2 + \frac{\afcoer}{2} \| \widetilde{\bsigma}_{h} - \hat{\bsigma}_{h} \|_{L^2(\Dom^{m})}^2,
\end{split}
\end{equation}
where we used \cref{eqn:err_estim_exact_discrete_pf_1}, norm equivalence \cref{eqn:HHJ_tensor_norm_equiv}, and a weighted Cauchy inequality.  Then, by combining the above results, we get the assertion.
\end{proof}

\subsection{Estimate the Total Error}\label{sec:estimate_total_error}

We will combine \cref{thm:err_estim_exact_domain} and \cref{thm:err_estim_exact_discrete} to get the total error.
\begin{theorem}[general error estimate]\label{thm:general_error_estim}
Adopt the hypotheses of \cref{thm:err_estim_exact_domain,lem:err_estim_exact_discrete}.  If $m \geq r + 1$, then
\begin{equation}\label{eqn:general_error_estim}
\begin{split}
\| \bsigma - \hat{\bsigma}_{h} \circ (\bPsi^{m})^{-1} \|_{0,h} + \| \grad{} (w - \hat{w}_{h} \circ (\bPsi^{m})^{-1}) \|_{L^2(\Dom)} &\leq C h^{\min(r+2,t-1) - 2/p}, \\
r \geq 1: ~~ \| w - \hat{w}_{h} \circ (\bPsi^{m})^{-1} \|_{2,h} &\leq C h^{\min(r+1,t-1)-2/p}, \\
r = 0: ~~ \| \grad{} (w - \hat{w}_{h} \circ (\bPsi^{m})^{-1}) \|_{L^2(\Dom)} &\leq C h,
\end{split}
\end{equation}
where $C > 0$ depends on $f$, the domain $\Dom$, and the shape regularity of the mesh.
\end{theorem}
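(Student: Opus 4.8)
The plan is to derive \cref{eqn:general_error_estim} by a triangle inequality that separates the \emph{PDE error} on the true domain, controlled by \cref{thm:err_estim_exact_domain}, from the \emph{geometric error}, controlled by \cref{thm:err_estim_exact_discrete}, and then to transport the geometric error from $\Dom^{m}$ back to $\Dom$ using the norm equivalences of \cref{prop:equiv_norms} and \cref{eqn:HHJ_tensor_norm_equiv}. Writing
\[
	\bsigma - \hat{\bsigma}_{h} \circ (\bPsi^{m})^{-1} = (\bsigma - \bsigma_{h}) + \bigl( \bsigma_{h} - \hat{\bsigma}_{h} \circ (\bPsi^{m})^{-1} \bigr),
\]
and similarly for $w$, the first summand is precisely what \cref{thm:err_estim_exact_domain} estimates, so it remains to treat the second.

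For the second summand I would use that, by construction in \cref{lem:err_estim_exact_discrete}, the tensor $\bsigma_{h} - \hat{\bsigma}_{h} \circ (\bPsi^{m})^{-1}$ is the push-forward to $\Dom$ under $\bPsi^{m}$ (via the matrix Piola transform \cref{eqn:matrix_Piola_transf_contra}) of $\widetilde{\bsigma}_{h} - \hat{\bsigma}_{h} \in \V_{h}^{m}$, and that $w_{h} - \hat{w}_{h} \circ (\bPsi^{m})^{-1}$ is the push-forward of $\tilde{w}_{h} - \hat{w}_{h} \in \W_{h}^{m}$. Applying \cref{prop:equiv_norms}, in particular \cref{eqn:equiv_norms_zero_bdy} (legitimate since both $w_{h}$ and $\hat{w}_{h}\circ(\bPsi^{m})^{-1}$ vanish on $\BdyDom$), together with the second equivalence in \cref{eqn:HHJ_tensor_norm_equiv}, with the roles of ``$l$'' and ``$m$'' in those statements played by $m$ and $\infty$, gives
\[
	\| \bsigma_{h} - \hat{\bsigma}_{h} \circ (\bPsi^{m})^{-1} \|_{0,h} + \| w_{h} - \hat{w}_{h} \circ (\bPsi^{m})^{-1} \|_{2,h} \leq C \bigl( \| \widetilde{\bsigma}_{h} - \hat{\bsigma}_{h} \|_{0,h,m} + \| \tilde{w}_{h} - \hat{w}_{h} \|_{2,h,m} \bigr),
\]
and the right-hand side is $\leq C h^{q} \| f \|_{H^{-1}(\Dom)}$ by \cref{thm:err_estim_exact_discrete}. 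The required bound on $\| \grad{} ( w_{h} - \hat{w}_{h} \circ (\bPsi^{m})^{-1} ) \|_{L^2(\Dom)}$ then follows from the $\|\cdot\|_{2,h}$ bound via the Poincaré inequality \cref{eqn:scalar_H2_h_norm_trace_Poincare} (the argument lies in $\SW_{h}$, and $\trinorm{\cdot}_{2,h}\le\|\cdot\|_{2,h}$); note that, unlike the PDE-error estimate, the bounds in \cref{thm:err_estim_exact_discrete} hold for every $r \geq 0$, so the case $r=0$ is covered.

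Finally I would combine the two contributions and verify that the geometric term never dominates. Because $m \geq r+1$, \cref{lem:err_estim_exact_discrete} gives $q = r+1$ when $m = r+1$ and $q = m-1 \geq r+1$ when $m > r+1$, hence $q \geq r+1$ in all cases; since $p \leq 2$ we have $\min(r+2,t-1) - 2/p \leq r+1 \leq q$, $\min(r+1,t-1) - 2/p \leq r < r+1 \leq q$, and $1 \leq q$, so for $h \leq 1$ the factor $h^{q}$ is dominated by each of the three PDE rates in \cref{eqn:err_estim_exact_domain}. Adding the PDE and geometric estimates then yields \cref{eqn:general_error_estim}. I expect the only genuine subtlety to be the transport of the mesh-dependent jump seminorms through the nonlinear map $\bPsi^{m}$, which is exactly what \cref{prop:equiv_norms} and \cref{eqn:HHJ_tensor_norm_equiv} were built to handle; the exponent bookkeeping then makes transparent why the hypothesis $m \geq r+1$ is precisely the condition preventing the geometric consistency error from spoiling the optimal convergence rate.
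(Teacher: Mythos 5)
Your overall strategy is the paper's: split off the PDE error via the triangle inequality, control the remainder by \cref{thm:err_estim_exact_discrete}, and transport norms back to $\Dom$ with \cref{prop:equiv_norms} and \cref{eqn:HHJ_tensor_norm_equiv}. The treatment of $w$ is correct as written, since $\tilde{w}_{h}=w_{h}\circ\bPsi^{m}$ is a plain composition and hence $w_{h}-\hat{w}_{h}\circ(\bPsi^{m})^{-1}=(\tilde{w}_{h}-\hat{w}_{h})\circ(\bPsi^{m})^{-1}$ exactly; the Poincar\'e step and the exponent bookkeeping ($q\ge r+1\ge\min(r+2,t-1)-2/p$) also check out.

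There is, however, a concrete error in the $\bsigma$ part. You assert that $\bsigma_{h}-\hat{\bsigma}_{h}\circ(\bPsi^{m})^{-1}$ is the matrix Piola push-forward of $\widetilde{\bsigma}_{h}-\hat{\bsigma}_{h}$. It is not: the theorem's error quantity uses the \emph{plain composition} $\hat{\bsigma}_{h}\circ(\bPsi^{m})^{-1}$, whereas $\widetilde{\bsigma}_{h}$ is related to $\bsigma_{h}$ through the Piola transform, $\bsigma_{h}\circ\bPsi^{m}=(\det\bJ)^{-2}\bJ\widetilde{\bsigma}_{h}\bJ\tp$. Mixing the two conventions silently drops the discrepancy term
\begin{equation*}
\| \bsigma_{h} - \widetilde{\bsigma}_{h}\circ(\bPsi^{m})^{-1} \|_{0,h}
\;\lesssim\;
\| (\det\bJ)^{-2}\bJ\widetilde{\bsigma}_{h}\bJ\tp - \widetilde{\bsigma}_{h} \|_{0,h,m},
\end{equation*}
which is genuinely nonzero. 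The paper's proof makes this a separate (third) term in the triangle inequality and bounds it by $Ch^{r+1}\|f\|_{H^{-1}(\Dom)}$, using $\|\bJ-\eye\|_{L^{\infty}}=O(h^{m})$ together with the a priori bound \cref{eqn:Kirchhoff_manifold_HHJ_mixed_a_priori_est} on $\widetilde{\bsigma}_{h}$ and $m\ge r+1$. Since $\min(r+2,t-1)-2/p\le r+1$, this extra contribution does not degrade the claimed rate, so the theorem still follows --- but your proof must include it; as written, the identification of the second summand as a pure push-forward is false, and the subsequent appeal to \cref{eqn:HHJ_tensor_norm_equiv} (which is stated for Piola-related pairs) does not apply to $\bsigma_{h}-\hat{\bsigma}_{h}\circ(\bPsi^{m})^{-1}$ directly.
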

\begin{proof}
By the triangle inequality and using the properties of the map $\bPsi^{m}$, we have
\begin{equation}\label{eqn:general_error_estim_pf_1}
\begin{split}
	\| \bsigma - \hat{\bsigma}_{h} \circ (\bPsi^{m})^{-1} \|_{0,h} &\leq \| \bsigma - \widetilde{\bsigma}_{h} \circ (\bPsi^{m})^{-1} \|_{0,h} \\
	&\quad + \| \widetilde{\bsigma}_{h} \circ (\bPsi^{m})^{-1} - \hat{\bsigma}_{h} \circ (\bPsi^{m})^{-1} \|_{0,h} \\
	\leq \| \bsigma - \bsigma_{h} \|_{0,h} + & \| \bsigma_{h} - \widetilde{\bsigma}_{h} \circ (\bPsi^{m})^{-1} \|_{0,h} + C \| \widetilde{\bsigma}_{h} - \hat{\bsigma}_{h} \|_{0,h,m}.
\end{split}
\end{equation}
Focusing on the middle term, we have
\begin{equation}\label{eqn:general_error_estim_pf_2}
\begin{split}
	\| & \bsigma_{h} - \widetilde{\bsigma}_{h} \circ (\bPsi^{m})^{-1} \|_{0,h} \leq \| \bsigma_{h} \circ \bPsi^{m} - \widetilde{\bsigma}_{h} \|_{0,h,m} \\
	&\leq \| (\det \bJ)^{-2} \bJ \widetilde{\bsigma}_{h} \bJ\tp - \widetilde{\bsigma}_{h} \|_{0,h,m} \leq C h^{r+1} \| \widetilde{\bsigma}_{h} \|_{0,h,m} \leq C h^{r+1} \| f \|_{H^{-1}(\Dom)}.
\end{split}
\end{equation}
Combining everything, we get
\begin{equation}\label{eqn:general_error_estim_pf_3}
\begin{split}
	\| \bsigma - \hat{\bsigma}_{h} \circ (\bPsi^{m})^{-1} \|_{0,h} &\leq C \max \left( h^{r+1}, h^{\min(r+2,t-1) - 2/p} \right),
\end{split}
\end{equation}
where $C > 0$ depends on $f$.  Taking a similar approach for the other terms involving $w - \hat{w}_{h} \circ (\bPsi^{m})^{-1}$ delivers the estimates.
\end{proof}

\begin{corollary}\label{cor:general_error_estim_smooth}
Adopt the hypothesis of \cref{thm:general_error_estim}, but assume $\BdyDom$ is globally smooth, and $f$, $w$, and $\bsigma$ are smooth.  If $r \geq 0$ is the degree of $\V_{h}$, then
\begin{equation}\label{eqn:general_error_estim_smooth}
\begin{split}
	\| \bsigma - \hat{\bsigma}_{h} \circ (\bPsi^{m})^{-1} \|_{0,h} + \| \grad{} (w - \hat{w}_{h} \circ  (\bPsi^{m})^{-1}) \|_{L^2(\Dom)} & \\
	+ h \| w - \hat{w}_{h} \circ (\bPsi^{m})^{-1} & \|_{2,h} \leq C h^{r+1},
\end{split}
\end{equation}
where $C > 0$ depends on $w$, the domain $\Dom$, and the shape regularity of the mesh.
\end{corollary}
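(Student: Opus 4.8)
The plan is to deduce the corollary from \cref{thm:general_error_estim} by choosing the Sobolev exponents in that theorem as favorably as the smoothness hypotheses permit. Since $\BdyDom$ is globally smooth and $f$, $w$, $\bsigma$ are smooth, the plate solution $w$ lies in $W^{t,2}(\Dom)$ for every $t \ge 3$ (cf.\ \cref{thm:Kirchhoff_regularity}); I would therefore fix $p = 2$, so that the factor $2/p$ in the estimates of \cref{thm:general_error_estim} equals $1$, and take $t = r+3$, which forces $\min(r+2,t-1) = r+2$ and $\min(r+1,t-1) = r+1$. The hypothesis $m \ge r+1$ required by \cref{thm:general_error_estim} is among the assumptions carried over into the corollary.

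With these choices, the first line of \cref{eqn:general_error_estim} reads
\begin{equation*}
\| \bsigma - \hat{\bsigma}_{h} \circ (\bPsi^{m})^{-1} \|_{0,h} + \| \grad{} (w - \hat{w}_{h} \circ (\bPsi^{m})^{-1}) \|_{L^2(\Dom)} \le C h^{(r+2)-1} = C h^{r+1},
\end{equation*}
which already gives the first two terms of \cref{eqn:general_error_estim_smooth}. For the third term $h \| w - \hat{w}_{h} \circ (\bPsi^{m})^{-1} \|_{2,h}$, when $r \ge 1$ the second line of \cref{eqn:general_error_estim} gives $\| w - \hat{w}_{h} \circ (\bPsi^{m})^{-1} \|_{2,h} \le C h^{(r+1)-1} = C h^{r}$, so multiplication by $h$ produces $C h^{r+1}$. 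Adding the two bounds and absorbing constants yields \cref{eqn:general_error_estim_smooth} in the case $r \ge 1$.

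It remains to handle the third term when $r = 0$; this is the only point that does not follow by mere substitution of exponents, since \cref{thm:general_error_estim} supplies no convergence rate in the $\|\cdot\|_{2,h}$ norm for $r = 0$. Here uniform boundedness suffices, because the term carries the factor $h = h^{r+1}$. I would obtain it from the triangle inequality, the norm equivalence \cref{eqn:equiv_norms_zero_bdy} (applied to the diffeomorphism $\bPsi^{m}$, using $\hat{w}_{h} \in \W_h^{m} \subset \zerobdy{H}^1(\Dom^{m})$), and the a priori estimate \cref{eqn:Kirchhoff_manifold_HHJ_mixed_a_priori_est}: since $w$ is smooth, $\| w \|_{2,h} \le \| w \|_{H^2(\Dom)}$ (the edge-jump contributions vanishing), and hence
\begin{equation*}
\| w - \hat{w}_{h} \circ (\bPsi^{m})^{-1} \|_{2,h} \le \| w \|_{2,h} + C \| \hat{w}_{h} \|_{2,h,m} \le C \bigl( \| w \|_{H^2(\Dom)} + \| f \|_{H^{-1}(\Dom)} \bigr),
\end{equation*}
a bound independent of $h$; multiplication by $h$ then gives $C h = C h^{r+1}$. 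Summing the three bounds completes the proof. Overall the argument is little more than bookkeeping of exponents, and the only mildly delicate step is recognizing that for $r = 0$ one must fall back on the uniform a priori bound rather than on a convergence estimate in the $\|\cdot\|_{2,h}$ norm.
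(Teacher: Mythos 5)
Your proposal is correct and follows essentially the same route the paper intends: \cref{cor:general_error_estim_smooth} is stated without a separate proof precisely because it is the specialization of \cref{thm:general_error_estim} to $p=2$ and $t$ arbitrarily large, which is exactly the bookkeeping you carry out. Your handling of the $r=0$ case of the $\|\cdot\|_{2,h}$ term --- falling back on the uniform bound from \cref{eqn:equiv_norms_zero_bdy} and the a priori estimate \cref{eqn:Kirchhoff_manifold_HHJ_mixed_a_priori_est} since the factor $h$ already supplies the rate --- is a correct and appropriate way to cover the one step the theorem does not literally provide.
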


\begin{remark}\label{rem:sub_parametric}
From \cref{thm:err_estim_exact_discrete}, if $m < r+1$, the error is sub-optimal, i.e., is $O(h^{m-1})$ for a smooth solution.  However, this only occurs in $\TkBdyDom{h}{}$; in the rest of the mesh, it is $O(h^{r+1})$.  Since the mesh is quasi-uniform, a straightforward estimate gives that the error measured over the entire domain is $O(h^{m-1/2})$.  This is verified in the simply supported numerical example in \cref{sec:simsupp_disk_ex}, as well as in both examples in \cref{sec:clamped_three_leaf_ex}, \cref{sec:simsupp_three_leaf_ex}.
\end{remark}

\subsection{Inhomogeneous Boundary Conditions}\label{sec:nonzero_BCs}

We now explain how to extend the above theory to handle non-vanishing boundary conditions.  First, construct a function $g \in W^{t,p}(\Dom)$, such that the displacement satisfies $w=g$ on $\BdyDom$ and $\pd{\bcn} w = \pd{\bcn} g$ on $\Bclamped$, and construct a function $\brho \in W^{t-2,p}(\Dom;\symmat)$, such that the normal-normal moment satisfies $\bcn\tp \bsigma \bcn = \bcn\tp \brho \bcn$ on $\Bsimsupp$, where $t \geq 3$, $3/2 < p \leq 2$ (recall \cref{thm:Kirchhoff_regularity}).

Then \cref{eqn:Kirchhoff_manifold_skeleton_mixed} is replaced by the problem of determining $(\bsigma,w) = (\zerobdy{\bsigma} + \brho,\zerobdy{w} + g)$, with $\zerobdy{\bsigma} \in \SV_{h}$, $\zerobdy{w} \in \SW_{h}$ (i.e., with homogeneous boundary conditions) such that
\begin{equation}\label{eqn:Kirchhoff_skeleton_mixed_nonzero_BC}
\begin{split}
	\afman{}{\zerobdy{\bsigma}}{\bvarphi} + \bfman{}{\bvarphi}{\zerobdy{w}} &= - \afman{}{\brho}{\bvarphi} - \bfman{}{\bvarphi}{g} + \inner{\varphinn}{\bcn \cdot \grad{} g}{\Bclamped}, \quad \forall \bvarphi \in \SV_{h}, \\
	\bfman{}{\zerobdy{\bsigma}}{v} &= -\duality{f}{v}{\Dom} - \bfman{}{\brho}{v}, ~ \forall v \in \SW_{h}.
\end{split}
\end{equation}
Note that the right-hand-side in the first equation of \cref{eqn:Kirchhoff_skeleton_mixed_nonzero_BC} simplifies to $- \afman{}{\brho}{\bvarphi} - \ringbfman{}{\bvarphi}{g}$, where $\ringbfman{}{\bvarphi}{v} := \bfman{}{\bvarphi}{v} - \inner{\varphinn}{\bcn \cdot \grad{} v}{\Bclamped}$ (i.e., it has no boundary term).

Similarly, the corresponding (intermediate) discrete problem \cref{eqn:Kirchhoff_manifold_HHJ_mixed}, on the exact domain, is replaced by finding $(\bsigma_{h},w_{h}) = (\zerobdy{\bsigma}_{h} + \brho_{h}, \zerobdy{w}_{h} + g_{h})$, with $\zerobdy{\bsigma}_{h} \in \V_h^{}$, $\zerobdy{w}_{h} \in \W_h^{}$ such that
\begin{equation}\label{eqn:Kirchhoff_HHJ_mixed_nonzero_BC_exact_Dom}
\begin{split}
\afman{}{\zerobdy{\bsigma}_{h}}{\bvarphi_{h}} + \bfman{}{\bvarphi_{h}}{\zerobdy{w}_{h}} &= - \afman{}{\brho_{h}}{\bvarphi_{h}} - \ringbfman{}{\bvarphi_{h}}{g_{h}} \\
& - \inner{\varphinn_{h}}{\bcn \cdot \grad{} g_{h}}{\Bclamped} + \inner{\varphinn_{h}}{\bcn \cdot \grad{} g}{\Bclamped}, ~ \forall \bvarphi_{h} \in \V_h^{}, \\
\bfman{}{\zerobdy{\bsigma}_{h}}{v_{h}} &= -\duality{f}{v_{h}}{\Dom} - \bfman{}{\brho_{h}}{v_{h}}, ~ \forall v_{h} \in \W_h^{},
\end{split}
\end{equation}
where $\brho_{h} = \LtwoProj^{} \brho$, and
$\LtwoProj : H^{0}_{h} (\Dom) \to \V_h$ is the $L^2(\Dom)$ projection, i.e., $\brho_{h}$ satisfies
\begin{equation}\label{eqn:mesh_dep_skeleton_L2_proj}
\begin{split}
\inner{\brho_{h} -\brho}{\bvarphi_{h}}{\TkDom{h}{}} + \duality{\bcn\tp [\brho_{h} - \brho] \bcn}{\varphinn_{h}}{\EkDom{h}{}} = 0, \text{ for all } \bvarphi_{h} \in \V_h^{},
\end{split}
\end{equation}
and $g_{h} = \Ilag_{h} g$. An error estimate between the solutions of \cref{eqn:Kirchhoff_skeleton_mixed_nonzero_BC} and \cref{eqn:Kirchhoff_HHJ_mixed_nonzero_BC_exact_Dom}, analogous to \cref{thm:err_estim_exact_domain}, follows similarly with the following additional steps. First, estimate $\bfman{}{\brho - \brho_{h}}{v_{h}} \leq \| \brho - \brho_{h} \|_{0,h} \| v_{h} \|_{2,h}$, note $\| \brho - \brho_{h} \|_{0,h} \leq \| \brho - \IHHJ_{h} \brho \|_{0,h}$ and use the approximation properties of $\IHHJ_{h}$ in subsection SM4.3.
Next, estimate $\ringbfman{}{\bvarphi_{h}}{g - g_{h}}$ and $\inner{\varphinn_{h}}{\bcn \cdot \grad{} (g - g_{h})}{\Bclamped}$ with \cref{eqn:b_h_form_curved_Fortin_oper}.

Finally, the discrete problem on the discrete domain is to find $(\hat{\bsigma}_{h},\hat{w}_{h}) = (\zerobdy{\hat{\bsigma}}_{h} + \hat{\brho}_{h},\zerobdy{\hat{w}}_{h} + \hat{g}_{h})$, with $\zerobdy{\hat{\bsigma}}_{h} \in \V_h^{m}$, $\zerobdy{\hat{w}}_{h} \in \W_h^{m}$ such that
\begin{equation}\label{eqn:Kirchhoff_HHJ_mixed_nonzero_BC}
\begin{split}
	\afman{m}{\zerobdy{\hat{\bsigma}}_{h}}{\hat{\bvarphi}_{h}} + \bfman{m}{\hat{\bvarphi}_{h}}{\zerobdy{\hat{w}}_{h}} &= - \afman{m}{\hat{\brho}_{h}}{\hat{\bvarphi}_{h}} - \ringbfman{m}{\hat{\bvarphi}_{h}}{\hat{g}_{h}} \\ 
	& - \inner{\hatvarphinn_{h}}{\hat{\bcn} \cdot [\grad{} \hat{g}_{h} - \tilde{\bxi}]}{\Bclamped^{m}},
	~ \forall \hat{\bvarphi}_{h} \in \V_h^{m}, \\
	\bfman{m}{\zerobdy{\hat{\bsigma}}_{h}}{\hat{v}_{h}} &= -\duality{\tilde{f}}{\hat{v}_{h}}{\Dom^{m}} - \bfman{m}{\hat{\brho}_{h}}{\hat{v}_{h}}, ~ \forall \hat{v}_{h} \in \W_h^{m},
\end{split}
\end{equation}
where $\hat{\brho}_{h} := \LtwoProj^{m} \widetilde{\brho}$, with $\widetilde{\brho}$ given by $\brho \circ \bPsi^{m} |_{T} = (\det \bJ_{T})^{-2} \bJ_{T} \widetilde{\brho} \bJ_{T}\tp$, $\bJ_{T} = \grad{} \bPsi^{m} |_{T}$, and $\LtwoProj^{m} : H^{0}_{h} (\Dom^{m}) \to \V_h^{m}$ is the $L^2(\Dom^{m})$ projection on $\Dom^{m}$, $\hat{g}_{h} := \Ilag_{h}^{m} \tilde{g}$, with $\tilde{g} := g \circ \bPsi^{m}$, and $\tilde{\bxi} = (\grad{} g) \circ \bPsi^{m}$. 
To obtain an analogous result to \cref{thm:general_error_estim}, we need to generalize \cref{lem:err_estim_exact_discrete}.  The argument is mostly the same as the proof of \cref{lem:err_estim_exact_discrete}, except there is an additional step to show that
\begin{equation}\label{eqn:exact_discrete_error_eqn_inhomog_BC}
\afman{m}{\widetilde{\brho} - \hat{\brho}_{h}}{\hat{\bvarphi}_{h}} +
\ringbfman{m}{\hat{\bvarphi}_{h}}{\tilde{g} - \hat{g}_{h}} +
\bfman{m}{\widetilde{\brho} - \hat{\brho}_{h}}{\hat{v}_{h}} = \mathrm{E}_{1} (\hat{\bvarphi}_{h},\hat{v}_{h}),
\end{equation}
for all $(\hat{v}_{h}, \hat{\bvarphi}_{h}) \in \W^{m}_h \times \V^{m}_h$, where
\begin{equation}\label{eqn:geometric_consistency_error_inhomog_BC}
| \mathrm{E}_{1} (\hat{\bvarphi}_{h},\hat{v}_{h}) | \leq C h^{q} \left( \| \hat{\bvarphi}_{h} \|_{0,h,m} + \| \hat{v}_{h} \|_{2,h,m} \right) \left( \| \brho \|_{W^{1,p}(\Dom;\symmat)} + \| g \|_{W^{3,p}(\Dom)} \right),
\end{equation}
where $q = m$ when $m=r+1$, otherwise $q = m-1$.  This also follows the same outline, but we note the following.  
(1) Estimating $\ringbfman{m}{\hat{\bvarphi}_{h}}{\tilde{g} - \hat{g}_{h}}$ with \cref{eqn:mapping_b_h_form} is simpler because the last boundary term in \cref{eqn:mapping_b_h_form} does not appear; then use \cref{lem:b_h_form_curved_Fortin_oper}; (2) noting that $\hat{g}_{h} = g_{h} \circ \bPsi^{m}$, $\inner{\varphinn_{h}}{\bcn \cdot \grad{} (g_{h} - g)}{\Bclamped}$ is mapped to $\inner{\hatvarphinn_{h}}{\hat{\bcn} \cdot \grad{} (\hat{g}_{h} - \tilde{g})}{\Bclamped^{m}}$ (plus residual terms) and is compared against $\inner{\hatvarphinn_{h}}{\hat{\bcn} \cdot [\grad{} \hat{g}_{h} - \tilde{\bxi}]}{\Bclamped^{m}}$; (3) finally, estimate $\inner{\hatvarphinn_{h}}{\hat{\bcn} \cdot [\grad{} \tilde{g} - \tilde{\bxi}]}{\Bclamped^{m}}$ using similar arguments as in the proof of \cref{thm:mapping_forms}. 
With this, and the obvious generalization of \cref{thm:err_estim_exact_discrete}, we obtain the following.
\begin{theorem}[inhomogeneous boundary conditions]\label{thm:error_estim_inhomog_BCs}
Adopt the hypotheses of \cref{thm:general_error_estim}, except assume that $(\bsigma, w)$ solves \cref{eqn:Kirchhoff_skeleton_mixed_nonzero_BC} and $(\hat{\bsigma}_{h},\hat{w}_{h})$ solves \cref{eqn:Kirchhoff_HHJ_mixed_nonzero_BC}.  If $m \geq r + 1$, then $(\bsigma, w)$ and $(\hat{\bsigma}_{h},\hat{w}_{h})$ satisfy the same estimates as in \cref{eqn:general_error_estim}.
\end{theorem}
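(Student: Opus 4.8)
The plan is to run the same three–stage argument as in the homogeneous case, working throughout with the data liftings $(\bsigma,w)=(\zerobdy{\bsigma}+\brho,\zerobdy{w}+g)$, $(\bsigma_h,w_h)=(\zerobdy{\bsigma}_h+\brho_h,\zerobdy{w}_h+g_h)$, and $(\hat{\bsigma}_h,\hat{w}_h)=(\zerobdy{\hat{\bsigma}}_h+\hat{\brho}_h,\zerobdy{\hat{w}}_h+\hat{g}_h)$, so that the unknowns with vanishing traces live in the homogeneous spaces and the coercivity/inf--sup machinery of \cref{lem:mesh_depend_a_coercive_over_kernel_b_abstract,lem:mesh_depend_b_inf_sup_flat_domain} applies verbatim; the only genuinely new work is controlling the extra boundary and lifting terms at each stage. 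First I would establish the analogue of \cref{thm:err_estim_exact_domain}, comparing the solution of \cref{eqn:Kirchhoff_HHJ_mixed_nonzero_BC_exact_Dom} with that of \cref{eqn:Kirchhoff_skeleton_mixed_nonzero_BC}: subtracting the two systems leaves the same left--hand side as the homogeneous error equation, while the new right--hand side terms are $\afman{}{\brho-\brho_h}{\bvarphi_h}$, bounded via $\|\brho-\brho_h\|_{0,h}\le\|\brho-\IHHJ_h\brho\|_{0,h}$ and the approximation estimates for $\IHHJ_h$ in subsection~SM4.3, together with $\ringbfman{}{\bvarphi_h}{g-g_h}$ and $\inner{\varphinn_h}{\bcn\cdot\grad{}(g-g_h)}{\Bclamped}$, both controlled by \cref{eqn:b_h_form_curved_Fortin_oper} of \cref{lem:b_h_form_curved_Fortin_oper} (with $C=0$ on straight edges, i.e.\ using the Fortin identity \cref{eqn:b_h_form_interp_oper_Fortin}) and the trace estimate \cref{eqn:scaling_trace_estimate}, with $g_h=\Ilag_h g$. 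This reproduces \cref{eqn:err_estim_exact_domain}, now with $C$ also depending on $\|g\|_{W^{t,p}(\Dom)}$ and $\|\brho\|_{W^{t-2,p}(\Dom;\symmat)}$.

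Next I would establish \cref{eqn:exact_discrete_error_eqn_inhomog_BC,eqn:geometric_consistency_error_inhomog_BC}, the $(\brho,g)$--version of \cref{lem:err_estim_exact_discrete}, by repeating that proof: map \cref{eqn:Kirchhoff_HHJ_mixed_nonzero_BC_exact_Dom} from $\Dom$ onto $\Dom^m$ using \cref{ass:HHJ_Lag_interp_optimal_map} (so that \cref{thm:curved_elem_map_estimates} applies, cf.\ \cref{rem:choice_optimal_map}), apply \cref{eqn:mapping_a_form,eqn:mapping_b_h_form_part_2} and the optimal--map identity \cref{eqn:curved_elem_map_estimate_isolate}, and invoke the Fortin property \cref{eqn:b_h_form_interp_oper_Fortin} to annihilate the leading geometric term exactly when $m=r+1$ (bounding it by $C h^{m-1}\|\cdot\|_{0,h,m}\|\cdot\|_{2,h,m}$ otherwise). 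The three new points are those itemized in the text: $\ringbfman{m}{\hat{\bvarphi}_h}{\tilde{g}-\hat{g}_h}$ carries no curved boundary term in \cref{eqn:mapping_b_h_form} and is handled directly by \cref{lem:b_h_form_curved_Fortin_oper}; the clamped trace term $\inner{\varphinn_h}{\bcn\cdot\grad{}(g_h-g)}{\Bclamped}$ transforms, via the matrix Piola rule and \cref{eqn:matrix_Piola_transf_contra_nn}, into $\inner{\hatvarphinn_h}{\hat{\bcn}\cdot\grad{}(\hat{g}_h-\tilde{g})}{\Bclamped^{m}}$ up to $O(h^m)$ residuals; and this is compared against the term $\inner{\hatvarphinn_h}{\hat{\bcn}\cdot[\grad{}\hat{g}_h-\tilde{\bxi}]}{\Bclamped^{m}}$ appearing in \cref{eqn:Kirchhoff_HHJ_mixed_nonzero_BC}, the difference being $\inner{\hatvarphinn_h}{\hat{\bcn}\cdot[\grad{}\tilde{g}-\tilde{\bxi}]}{\Bclamped^{m}}$ with $\grad{}\tilde{g}-\tilde{\bxi}=(\bJ\tp-\eye)\tilde{\bxi}=O(h^m)$ by \cref{eqn:curved_elem_map_optim}, estimated by the same ``$\eye-\bJ\tp$ against a piecewise constant'' manipulation as in \cref{eqn:mapping_b_h_form_pf_8,eqn:mapping_b_h_form_pf_9}; finally $\afman{m}{\widetilde{\brho}-\hat{\brho}_h}{\hat{\bvarphi}_h}$ is $O(h^m)\|\brho\|_{W^{1,p}(\Dom;\symmat)}\|\hat{\bvarphi}_h\|_{0,h,m}$ since $\hat{\brho}_h=\LtwoProj^{m}\widetilde{\brho}$ and $\widetilde{\brho}$ is the Piola pullback of $\brho$, so the argument behind \cref{eqn:mapping_a_form} applies. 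Combining this with \cref{eqn:exact_discrete_error_eqn} yields the full error system for $\widetilde{\bsigma}_h-\hat{\bsigma}_h$ and $\tilde{w}_h-\hat{w}_h$, and the inf--sup/coercivity bootstrap of \cref{thm:err_estim_exact_discrete} (test with $\hat{v}_h=0$, then with the error pair) gives $\|\widetilde{\bsigma}_h-\hat{\bsigma}_h\|_{0,h,m}+\|\tilde{w}_h-\hat{w}_h\|_{2,h,m}\le C h^{q}(\|\brho\|_{W^{1,p}(\Dom;\symmat)}+\|g\|_{W^{3,p}(\Dom)}+\|f\|_{H^{-1}(\Dom)})$.

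Finally, the total error follows exactly as in \cref{thm:general_error_estim}: a triangle inequality splits $\|\bsigma-\hat{\bsigma}_h\circ(\bPsi^{m})^{-1}\|_{0,h}$ (and the corresponding $w$--quantities) into the PDE error of the first step, the Piola/interpolation defect $\|\bsigma_h-\widetilde{\bsigma}_h\circ(\bPsi^{m})^{-1}\|_{0,h}\le C h^{r+1}(\ldots)$ estimated as in \cref{eqn:general_error_estim_pf_2} together with the a priori bound on $\|\widetilde{\bsigma}_h\|_{0,h,m}$, and the geometric error of the second step; since $m\ge r+1$ all three are $O(h^{\min(r+2,t-1)-2/p})$, giving \cref{eqn:general_error_estim} (with $C$ now also depending on the boundary data through $g,\brho$). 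The hard part will be the clamped--boundary bookkeeping in the second step: reconciling $\grad{}\tilde{g}$ with $\tilde{\bxi}=(\grad{}g)\circ\bPsi^{m}$, and the two distinct liftings of $\brho$ (the Piola pullback of $\LtwoProj\brho$ versus $\LtwoProj^{m}$ of the pullback of $\brho$), without losing a power of $h$ — which is precisely what the $\eye-\bJ\tp$/piecewise--constant devices from \cref{thm:mapping_forms,lem:b_h_form_curved_Fortin_oper} are built to handle.
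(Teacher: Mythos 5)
Your proposal follows essentially the same route as the paper: the same three-stage decomposition (PDE error on the exact domain with the lifted data $\brho_h=\LtwoProj\brho$, $g_h=\Ilag_h g$; the geometric consistency estimate \cref{eqn:exact_discrete_error_eqn_inhomog_BC}--\cref{eqn:geometric_consistency_error_inhomog_BC} via the optimal map \cref{ass:HHJ_Lag_interp_optimal_map} and the Fortin property when $m=r+1$; and the final triangle inequality as in \cref{thm:general_error_estim}), including the same three new boundary-data points the paper itemizes. The argument is correct and, if anything, slightly more explicit than the paper's own sketch (e.g., the identity $\grad{}\tilde g-\tilde\bxi=(\bJ\tp-\eye)\tilde\bxi$).
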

We also obtain a corollary directly analogous to \cref{cor:general_error_estim_smooth}.

\section{Numerical Results}\label{sec:numerical_results}

We present numerical examples  computed on a disk, as well as on a non-symmetric domain.  The discrete domains were generated by a successive uniform refinement scheme, with curved elements generated using a variant of the procedure in \cite[Sec.~3.2]{Lenoir_SJNA1986}.  As above, the finite element spaces $\V_{h}$ and $\W_{h}$ are of
degree $r$ and $r+1$ respectively, where $r\ge 0$, and the geometric approximation degree is denoted $m$.  All computations were done with the Matlab/C++ finite element toolbox
FELICITY \cite{Walker_SJSC2018}, where we used the ``backslash'' command in Matlab to solve the linear systems.

From \cref{ass:HHJ_Lag_interp_optimal_map}, recall that $\MapT{}^{m} := \Ilag_{h}^{1,m} \bPsi_{}^{1}$, which is plausible to implement, but inconvenient.  Instead, we first compute $\MapT{}^{m+1}$ using the procedure in \cite[Sec. 3.2]{Lenoir_SJNA1986}, then we define $\MapT{}^{m} := \Ilag_{h}^{1,m} \MapT{}^{m+1}$, which is easy to implement because they are standard Lagrange spaces.  Moreover, the accuracy is not affected.  As for the boundary data, $\hat{g}_{h}$, $\tilde{\bxi}$, and $\hat{\brho}_{h}$ only need to be computed on the boundary $\BdyDom^{m}$; in fact, only the boundary part of the $L^2$ projection $\LtwoProj^{m}$ needs to be computed.

\subsection{Practical Error Estimates}\label{sec:practical_error_est}

For convenience, the errors we compute are $\| \grad{} (w - \hat{w}_{h}) \|_{L^2(\Dom^{m})}$, $| w - \hat{w}_{h} |_{H^2(\TkDom{h}{m})}$, $\| \bsigma - \hat{\bsigma}_{h} \|_{L^2(\Dom^{m})}$, and $h^{1/2} \| \bcn_{h}\tp (\bsigma - \hat{\bsigma}_{h}) \bcn_{h} \|_{L^2(\EkDom{h}{m})}$, where the exact solution has been extended by analytic continuation.  These errors can be related to the ones in \cref{eqn:general_error_estim_smooth} by basic arguments and a triangle inequality.  Essentially, we need to bound the error between $(\bsigma_{h},w_{h})$ and $(\widetilde{\bsigma}_{h},\tilde{w}_{h})$ in a sense clarified by the following result.

\begin{proposition}\label{prop:error_due_to_mapping}
Let $f_h$ be a piecewise (possibly mapped) polynomial defined over $\TkDom{h}{}$, and let $\bPsi^{m} : \Dom^{m} \to \Dom$ be the piecewise element mapping from \cref{sec:curved_triangulations}.  Assume $f_{h}$ has a bounded extension to $\Dom^{m}$.  Then,
\begin{equation}\label{eqn:error_due_to_mapping}
\begin{split}
\| \grad{}^{s} (f_h - f_h \circ \bPsi^{m}) \|_{L^2(\Dom^{m})} &\leq C h^{m} \| \grad{}^{s} f_h \|_{L^2(\widetilde{\Dom}^{m})}, \text{ for } s = 0, 1, \\
\| f_h - f_h \circ \bPsi^{m} \|_{H^2_{h}(\Dom^{m})} &\leq C h^{m-1} \| f_h \|_{H^2_{h}(\widetilde{\Dom}^{m})},
\end{split}
\end{equation}
where $\widetilde{\Dom}^{m} = \bigcup_{T^{m} \in \TkDom{h}{m}} \mathrm{conv} (T^{m})$ and $\mathrm{conv} (T^{m})$ is the convex hull of $T^{m}$.
\end{proposition}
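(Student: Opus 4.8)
The plan is to localize everything to the boundary strip, to rewrite $f_h - f_h\circ\bPsi^{m}$ as a controlled ``difference quotient'' of the locally polynomial function $f_h$ against the near-identity map $\bPsi^{m}$, and to absorb the surviving Jacobian factors using the sharp derivative bounds on $\bPsi^{m}$ from \cref{thm:curved_elem_map_estimates}. First I would observe that, by construction, $\bPsi^{m}_{T}\equiv\ident{T^{m}}$ on every non-boundary element $T^{m}\notin\TkBdyDom{h}{m}$ and on every interior edge, so $f_h - f_h\circ\bPsi^{m}$ vanishes off the strip $\bigcup_{T^{m}\in\TkBdyDom{h}{m}}T^{m}$ and on $\EkIntDom{h}{m}$; in particular it preserves interelement continuity, so all three broken norms in \cref{eqn:error_due_to_mapping} reduce to sums over boundary elements, and only the clamped-boundary jump contribution to $\|\cdot\|_{2,h,m}$ can be nonzero. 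Fixing a boundary element $T^{m}$, I would extend the (possibly mapped) polynomial $f_h|_{T}$ through the element maps to a fixed neighborhood of $T^{m}$ contained in $\widetilde{\Dom}^{m}$ --- this is precisely what the convex-hull construction of $\widetilde{\Dom}^{m}$ buys --- and record, from \cref{eqn:curved_elem_map_optim} with $l=m$ (so $\bPsi^{m}=\bPhi^{m\infty}$), the bounds $\|\grad{}^{s}(\bPsi^{m}_{T}-\ident{T^{m}})\|_{L^{\infty}(T^{m})}\le C h^{m+1-s}$ for $0\le s\le m+1$; in particular $|\bPsi^{m}(x)-x|\le C h^{m+1}$, $\|\bJ-\eye\|_{L^{\infty}(T^{m})}\le C h^{m}$ with $\bJ=\grad{}\bPsi^{m}$, and $\|\grad{}^{2}\bPsi^{m}_{T}\|_{L^{\infty}(T^{m})}\le C h^{m-1}$. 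For $h$ small, shape regularity then guarantees that $\bPsi^{m}(T^{m})$ and every segment $[x,\bPsi^{m}(x)]$, $x\in T^{m}$, lie in $\widetilde{\Dom}^{m}$, so all the pointwise evaluations below are legitimate.

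For the first line of \cref{eqn:error_due_to_mapping} I would apply the fundamental theorem of calculus along the segment joining $x$ to $\bPsi^{m}(x)$, yielding the pointwise estimate $|f_h(x)-f_h(\bPsi^{m}(x))|\le C h^{m+1}\|\grad{}f_h\|_{L^{\infty}(\widetilde{\Dom}^{m})}$; integrating over $T^{m}$ (using $|T^{m}|\le C h^{2}$) and invoking the usual inverse estimates for fixed-degree polynomials on shape-regular sets of diameter $\sim h$, namely $\|\grad{}f_h\|_{L^{\infty}}\le C h^{-1}\|\grad{}f_h\|_{L^{2}}\le C h^{-2}\|f_h\|_{L^{2}}$, produces $\|f_h - f_h\circ\bPsi^{m}\|_{L^{2}(T^{m})}\le C h^{m}\|f_h\|_{L^{2}}$ locally; summing over $\TkBdyDom{h}{m}$ gives the $s=0$ bound. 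For $s=1$ I would split $\grad{}(f_h - f_h\circ\bPsi^{m}) = \bigl[\grad{}f_h - (\grad{}f_h)\circ\bPsi^{m}\bigr] + \bigl[\eye-\bJ\tp\bigr](\grad{}f_h)\circ\bPsi^{m}$, estimate the first bracket exactly as above with $\grad{}f_h$ in place of $f_h$, and estimate the second by $\|\eye-\bJ\tp\|_{L^{\infty}(T^{m})}\,\|(\grad{}f_h)\circ\bPsi^{m}\|_{L^{2}(T^{m})}\le C h^{m}\|\grad{}f_h\|_{L^{2}}$ using the near-unit-Jacobian change of variables $x\mapsto\bPsi^{m}(x)$.

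The $H^{2}_{h}$ estimate follows the same template after one more differentiation: $\grad{}^{2}(f_h\circ\bPsi^{m})$ additionally produces a term proportional to $(\grad{}f_h)\circ\bPsi^{m}\,\grad{}^{2}\bPsi^{m}_{T}$, and grouping as before writes $\grad{}^{2}(f_h - f_h\circ\bPsi^{m})$ as the sum of $\bigl[\grad{}^{2}f_h-(\grad{}^{2}f_h)\circ\bPsi^{m}\bigr]$ and terms each carrying a factor $\grad{}^{s}(\ident{T^{m}}-\bPsi^{m}_{T})$ with $s=1,2$; balancing powers of $h$ via the inverse estimates, the dominant contribution is the one carrying $\grad{}^{2}(\bPsi^{m}_{T}-\ident{T^{m}})=O(h^{m-1})$, giving $\|\grad{}^{2}(f_h - f_h\circ\bPsi^{m})\|_{L^{2}(T^{m})}\le C h^{m-1}\|f_h\|_{H^{2}}$ locally. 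The lower-order norm contributions are already covered by the $s=0,1$ arguments, and the surviving clamped-boundary jump term is dispatched with the scaling/trace estimate \cref{eqn:scaling_trace_estimate} applied elementwise. \textbf{The main obstacle} is precisely this $H^{2}_{h}$ bound: since $f_h\circ\bPsi^{m}$ is not a polynomial and $\bPsi^{m}$ carries only $C^{m+1}$-type control, its second derivative is merely $O(h^{m-1})$ --- one power worse than $\bJ-\eye$ --- so the real content of the proof is the careful separation of the ``differencing of $f_h$'' part (small because $\bPsi^{m}$ is near the identity) from the ``extra derivatives of $\bPsi^{m}$'' part, together with setting up the polynomial extension onto $\widetilde{\Dom}^{m}$ so that every composition and inverse estimate is valid; once this bookkeeping is in place, the remaining computations are routine.
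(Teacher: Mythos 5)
Your overall strategy --- localize to the boundary strip where $\bPsi^{m}\neq\mathrm{id}$, control the ``differencing'' part of $f_h-f_h\circ\bPsi^{m}$ by the fundamental theorem of calculus together with the $O(h^{m+1-s})$ bounds on $\grad{}^{s}(\bPsi^{m}_{T}-\ident{T^{m}})$ from \cref{thm:curved_elem_map_estimates}, absorb the chain-rule factors $\bJ-\eye=O(h^{m})$ and $\grad{}^{2}\bPsi^{m}_{T}=O(h^{m-1})$, and trade derivatives of the piecewise polynomial via inverse estimates --- is the natural argument for this proposition and matches what the statement is implicitly relying on (the paper itself does not print a proof in the main text). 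Your identification of the second derivative of $\bPsi^{m}$ as the source of the lost power of $h$ in the $H^{2}_{h}$ bound is exactly right, and the $s=0,1$ estimates check out.

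There is, however, one incorrect intermediate claim: you assert that because $f_h-f_h\circ\bPsi^{m}$ vanishes on $\EkIntDom{h}{m}$, the interior-edge jump contributions to $\|\cdot\|_{2,h,m}$ vanish and ``only the clamped-boundary jump contribution can be nonzero.'' The jump in \cref{eqn:scalar_H2_h_norm} is of the \emph{normal derivative}, not of the trace. On an interior edge $E$ of a boundary triangle $T^{m}$, $\bPsi^{m}_{T}|_{E}=\ident{}$ forces only the tangential derivative of $\bPsi^{m}_{T}$ to be the identity there; the normal column of $\bJ$ is generally not $\bcn$, so from the boundary-triangle side $\bcn\cdot\grad{}(f_h-f_h\circ\bPsi^{m})=\bcn\cdot(\eye-\bJ\tp)\grad{}f_h\neq 0$ on $E$, while the contribution from the neighboring element vanishes. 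The resulting jump term is therefore genuinely present. It is harmless --- $h^{-1/2}\|\jump{\bcn\cdot\grad{}(f_h-f_h\circ\bPsi^{m})}\|_{L^2(E)}\le C h^{m}h^{-1/2}\|\grad{}f_h\|_{L^2(E)}\le C h^{m-1}\bigl(\|\grad{}f_h\|_{L^2(T^{m})}+h\|\hess{}f_h\|_{L^2(T^{m})}\bigr)$ by \cref{eqn:scaling_trace_estimate}, consistent with the claimed $O(h^{m-1})$ rate --- but it must be estimated, not dismissed. A smaller point: a segment $[x,\bPsi^{m}(x)]$ need not lie in $\mathrm{conv}(T^{m})$ itself (the image point can exit the hull by $O(h^{m+1})$), so you should either invoke the union $\widetilde{\Dom}^{m}$ of neighboring hulls or the bounded extension of $f_h$ hypothesized in the statement to justify the pointwise evaluations.
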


\subsection{Unit Disk Domain}\label{sec:disk_ex}

The disk has an unexpected symmetry with respect to curved elements. When approximating $\BdyDom$ by polynomials of degree $m = 2 q$, the approximation order is actually $m = 2 q + 1$.  This is because each circular arc of $\BdyDom$, when viewed as a graph over a flat edge in $\EkBdyDom{h}{1}$, is symmetric about the midpoint of the edge.  Thus, since the Lagrange interpolation nodes are placed symmetrically on the edge, the resulting interpolant must be of even degree. In other words, $\Dom^{2}$, $\Dom^{4}$ have the same approximation order as $\Dom^{3}$, $\Dom^{5}$, respectively.  Our numerical results reflect this.

\subsubsection{The Homogeneous Clamped Disk}\label{sec:clamped_disk_ex}

In this example, the exact solution with clamped boundary conditions on $\BdyDom$, written in polar coordinates, is taken to be
\begin{equation}\label{eqn:clamped_exact_soln_disk}
	w(r,\theta) = \sin^2 (\pi r).
\end{equation}
\Cref{tab:clamped_disk_EOC} shows the estimated orders of convergence (EoC), which were computed by evaluating the ratio of the errors between the last two meshes in a sequence of successively, uniformly refined meshes.  The optimal orders of convergence, based on the degree of the elements, is $r+1$ for the three quantities $| \hat{w}_{h} |_{H^1}$, $| \hat{\bsigma}_{h} |_{L^2}$, and $h^{1/2} | \hatsignn_{h} |_{L^2(\EkDom{h}{m})}$, and $r$ for $| \hat{w}_{h} |_{H^2_h}$; note: we use the abbreviation $| \hat{w}_{h} |_{H^1} \equiv | w - \hat{w}_{h} |_{H^1(\Dom^{m})}$, etc.  The convergence is better than expected, in that we do not see reduced order convergence when $m=r$, possibly due to the clamped boundary conditions and/or the choice of exact solution (also recall the symmetry discussion earlier).  We do see reduced convergence when $m = r-1$.

\begin{table}[htbp]
{\footnotesize
\caption{EoC for the clamped disk.  $N_T$ is the number of triangles in the final mesh after multiple uniform refinements.  Asterisks indicate suboptimal orders and italics indicate the case $m=r+1$, for which optimality is proven in this paper.}\label{tab:clamped_disk_EOC}
\begin{center}
\begin{tabular}{cccK{.9in}K{.9in}K{.9in}K{.9in}} \toprule
$N_{T}$ & $m$ & $r$ & $| \hat{w}_{h} |_{H^1}$ & $| \hat{w}_{h} |_{H^2_h}$ & $| \hat{\bsigma}_{h} |_{L^2}$ & $h^{1/2} | \hatsignn_{h} |_{L^2(\EkDom{h}{m})}$ \\ \midrule
$2^{17}$ & 1 & 0 & \it 1.0002 & \it 0.0000 & \it 0.9997 & \it 1.0007 \\ 
$2^{15}$ & 1 & 1 & 2.0006 & 0.9998 & 1.9978 & 2.0312 \\ 
$2^{15}$ & 1 & 2 & 2.0052\rlap{*} & 1.9980\rlap{*} & 1.5121\rlap{*} & 1.5022\rlap{*} \\ 
$2^{15}$ & 2 & 1 & \it 2.0002 & \it 0.9990 & \it 1.9976 & \it 2.0317 \\ 
$2^{15}$ & 2 & 2 & 2.9984 & 1.9985 & 2.9994 & 2.9934 \\ 
$2^{13}$ & 2 & 3 & 4.0039 & 3.0007 & 3.9907 & 4.0853 \\ 
$2^{15}$ & 3 & 2 & \it 2.9984 & \it 1.9985 & \it 2.9994 & \it 2.9934 \\ 
$2^{13}$ & 3 & 3 & 4.0039 & 3.0007 & 3.9906 & 4.0746 \\ 
$2^{13}$ & 3 & 4 & 4.9862 & 3.9881 & 3.8387\rlap{*} & 3.5173\rlap{*} \\ 
$2^{13}$ & 4 & 3 & \it 4.0038 & \it 3.0006 & \it 3.9908 & \it 4.0975 \\ 
$2^{13}$ & 4 & 4 & 4.9868 & 3.9883 & 5.0022 & 4.9824 \\ 
$2^{13}$ & 5 & 4 & \it 4.9868 & \it 3.9883 & \it 5.0022 & \it 4.9823 \\
\bottomrule
\end{tabular}
\end{center}
}
\end{table}

\subsubsection{The Homogeneous Simply Supported Disk}\label{sec:simsupp_disk_ex}

The exact solution with simply supported boundary conditions on $\BdyDom$, written in polar coordinates, is
\begin{equation}\label{eqn:simpsupp_exact_soln_disk}
w(r,\theta) = \cos ((3/2) \pi r).
\end{equation}
\Cref{tab:simpsupp_disk_EOC} shows the estimated orders of convergence (EoC).  The convergence order is consistent with the error estimate in \cref{eqn:general_error_estim_smooth} (accounting for the symmetry of the disk).  For example, when $m=r=1$, we see $O(h^{1/2})$ for $\hat{\bsigma}_{h}$ (see \cref{rem:sub_parametric}). The convergence rate for $\hat{w}_{h}$ is not reduced, but it is not optimal.  When $m=r=3$, $\bsigma$ converges with $O(h^{5/2})$ (consistent with \cref{rem:sub_parametric}), yet $\hat{w}_{h}$ performs better.  The ``improved'' error convergence for $\hat{w}_{h}$ could be due to the particular choice of exact solution.

\begin{table}[htbp]
{\footnotesize
\caption{EoC for the simply supported disk.  $N_T$ is the number of triangles in the final mesh after multiple uniform refinements.  Asterisks indicate suboptimal orders and italics indicate $m=r+1$.}\label{tab:simpsupp_disk_EOC}
\begin{center}
\begin{tabular}{cccK{.9in}K{.9in}K{.9in}K{.9in}} \toprule
$N_{T}$ & $m$ & $r$ & $| \hat{w}_{h} |_{H^1}$ & $| \hat{w}_{h} |_{H^2_h}$ & $| \hat{\bsigma}_{h} |_{L^2}$ & $h^{1/2} | \hatsignn_{h} |_{L^2(\EkDom{h}{m})}$ \\ \midrule
$2^{17}$ & 1 & 0 & \it 1.0002 & \it 0.0000 & \it 0.9997 & \it 1.0016 \\
$2^{15}$ & 1 & 1 & 1.0827\rlap{*} & 0.6840\rlap{*} & 0.4976\rlap{*} & 0.4835\rlap{*} \\
$2^{15}$ & 1 & 2 & 1.0297\rlap{*} & 0.4920\rlap{*} & 0.4926\rlap{*} & 0.4775\rlap{*} \\
$2^{15}$ & 2 & 1 & \it 1.9997 & \it 0.9996 & \it 1.9988 & \it 2.0202 \\ 
$2^{15}$ & 2 & 2 & 3.0001 & 1.9987 & 2.9974 & 2.9918 \\ 
$2^{13}$ & 2 & 3 & 3.9793 & 2.9819 & 2.5704\rlap{*} & 2.4795\rlap{*} \\
$2^{15}$ & 3 & 2 & \it 3.0001 & \it 1.9987 & \it 2.9976 & \it 2.9930 \\ 
$2^{13}$ & 3 & 3 & 3.9789 & 2.9820 & 2.5780\rlap{*} & 2.4783\rlap{*} \\
$2^{13}$ & 3 & 4 & 3.5159\rlap{*} & 2.5344\rlap{*} & 2.5008\rlap{*} & 2.4952\rlap{*} \\
$2^{13}$ & 4 & 3 & \it 3.9896 & \it 2.9916 & \it 4.0010 & \it 4.0354 \\ 
$2^{13}$ & 4 & 4 & 5.0107 & 4.0067 & 4.9846 & 4.9747 \\ 
$2^{13}$ & 5 & 4 & \it 5.0107 & \it 4.0067 & \it 4.9849 & \it 4.9772 \\
\bottomrule
\end{tabular}
\end{center}
}
\end{table}

\subsection{Three-Leaf Domain}\label{sec:three_leaf_ex}

The boundary of $\Dom$ is parameterized by
\begin{equation}\label{eqn:three_leaf_domain_bdy_param}
	x (t) = [1 + 0.4 \cos(3 t)] \cos(t), ~~
	y (t) = [1 + (0.4 + 0.22 \sin(t)) \cos(3 t)] \sin(t),
\end{equation}
for $0 \leq t \leq 2 \pi$ (see \cref{fig:three_leaf_domain}).  This domain does not have the additional symmetry of the disk.

\begin{figure}[ht]
\begin{center}

\includegraphics[width=4.0in]{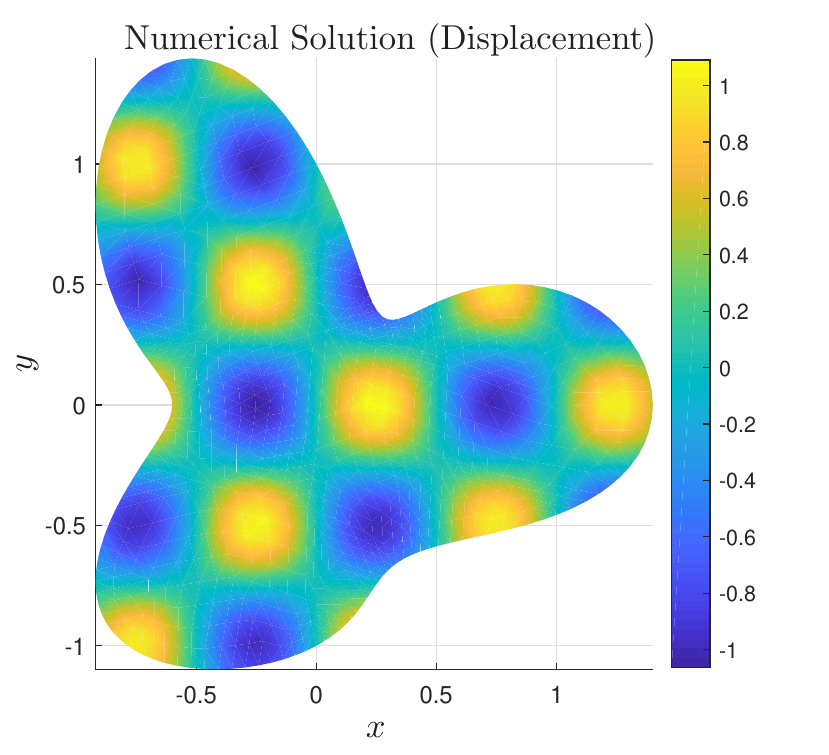}
\caption{Illustration of the three-leaf domain.  The numerical solution $\hat{w}_{h}$ approximating \cref{eqn:clamped_exact_soln_three_leaf} is shown.
}
\label{fig:three_leaf_domain}
\end{center}
\end{figure}

\subsubsection{Inhomogeneous Clamped Boundary Conditions}\label{sec:clamped_three_leaf_ex}

The exact solution is taken to be
\begin{equation}\label{eqn:clamped_exact_soln_three_leaf}
	w(x,y) = \sin( 2 \pi x) \cos( 2 \pi y),
\end{equation}
with the corresponding boundary conditions. \Cref{tab:clamped_EOC_three_leaf} shows the estimated orders of convergence (EoC).  The order of convergence is reduced, as expected, for $\hat{\bsigma}_{h}$, e.g., when $m=4$, the convergence rate \emph{goes down} (from $O(h^4)$ to $O(h^{3.5})$) when $r$ \emph{increases} from $r=3$ to $r=4$. One would expect the convergence rate to at least stay the same.  The reason for this is connected to estimating the term \cref{eqn:exact_discrete_error_eqn_pf_2} in the proof of \cref{lem:err_estim_exact_discrete}, where if $m \neq r+1$, then the geometric error is $O(h^{m-1})$.  So, in the example above, the error should go down to $O(h^3)$.  However, this geometric error is concentrated in the elements adjacent to the boundary only, so it is only $O(h^{3.5})$; see \cref{rem:sub_parametric}.  Recall that when $m=r+1$, then the Fortin property \cref{eqn:b_h_form_interp_oper_Fortin} applies, and the geometric error is $O(h^{m})$.  When $m > r+1$, then $O(h^{m-1}) = O(h^{r+1})$, which is sufficient (see the hypothesis of \cref{thm:error_estim_inhomog_BCs}).  However, the convergence order of $\hat{w}_{h}$ is better than expected, although it is reduced when $m = r-1$.

\begin{table}[htbp]
{\footnotesize
\caption{EoC for the clamped three leaf domain.  $N_T$ is the number of triangles in the final mesh after multiple uniform refinements.    Asterisks indicate suboptimal orders and italics indicate $m=r+1$.}\label{tab:clamped_EOC_three_leaf}
\begin{center}
\begin{tabular}{rccK{.85in}K{.85in}K{.85in}K{.85in}} \toprule
$N_{T}$ & $m$ & $r$ & $| \hat{w}_{h} |_{H^1}$ & $| \hat{w}_{h} |_{H^2_h}$ & $| \hat{\bsigma}_{h} |_{L^2}$ & $h^{1/2} | \hatsignn_{h} |_{L^2(\EkDom{h}{m})}$ \\ \midrule
$368640$ & 1 & 0 & \it 1.0009 & \it 0.0000 & \it 0.9993 & \it 1.0072 \\ 
$92160$ & 1 & 1 & 1.8821\rlap{*} & 0.8326\rlap{*} & 0.5016\rlap{*} & 0.4787\rlap{*} \\ 
$92160$ & 1 & 2 & 1.5191\rlap{*} & 0.5045\rlap{*} & 0.4966\rlap{*} & 0.4809\rlap{*} \\ 
$92160$ & 2 & 1 & \it 2.0009 & \it 1.0010 & \it 1.9942 & \it 2.0633 \\ 
$92160$ & 2 & 2 & 2.9808 & 1.9271\rlap{*} & 1.5048\rlap{*} & 1.5069\rlap{*} \\ 
$23040$ & 2 & 3 & 2.5621\rlap{*} & 1.5496\rlap{*} & 1.5001\rlap{*} & 1.5195\rlap{*} \\ 
$92160$ & 3 & 2 & \it 2.9996 & \it 2.0000 & \it 2.9983 & \it 2.9908 \\ 
$23040$ & 3 & 3 & 3.9900 & 2.9619 & 2.5301\rlap{*} & 2.4545\rlap{*} \\ 
$23040$ & 3 & 4 & 3.5672\rlap{*} & 2.5239\rlap{*} & 2.4872\rlap{*} & 2.4317\rlap{*} \\ 
$23040$ & 4 & 3 & \it 3.9972 & \it 2.9985 & \it 3.9933 & \it 4.1447 \\ 
$23040$ & 4 & 4 & 4.9893 & 3.9460 & 3.5022\rlap{*} & 3.5105\rlap{*} \\ 
$23040$ & 5 & 4 & \it 4.9989 & \it 4.0003 & \it 4.9959 & \it 4.9710 \\ 
\bottomrule
\end{tabular}
\end{center}
}
\end{table}

\subsubsection{Inhomogeneous Simply Supported Boundary Conditions}\label{sec:simsupp_three_leaf_ex}

The exact solution is taken to be
\begin{equation}\label{eqn:simpsupp_exact_soln_three_leaf}
	w(x,y) = \sin( 2 \pi x) \cos( 2 \pi y),
\end{equation}
with the corresponding boundary conditions. \Cref{tab:simpsupp_EOC_three_leaf} shows the estimated orders of convergence (EoC).  The convergence order is consistent with the error estimate in \cref{eqn:general_error_estim_smooth}. 
When $m=r=1$, we see $O(h^{1/2})$ for $\hat{\bsigma}_{h}$ (see \cref{rem:sub_parametric}). The convergence rate for $\hat{w}_{h}$ is not reduced, but it is not optimal.  When $m=r=3$, $\bsigma$ converges with $O(h^{5/2})$ (consistent with \cref{rem:sub_parametric}), yet $\hat{w}_{h}$ performs better.  The ``improved'' error convergence for $\hat{w}_{h}$ could be due to the particular choice of exact solution.

\begin{table}[htbp]
{\footnotesize
\caption{EoC for the simply supported three leaf domain.  $N_T$ is the number of triangles in the final mesh after multiple uniform refinements.  Asterisks indicate suboptimal orders and italics indicate $m=r+1$.}\label{tab:simpsupp_EOC_three_leaf}
\begin{center}
\begin{tabular}{cccK{.85in}K{.85in}K{.85in}K{.85in}} \toprule
$N_{T}$ & $m$ & $r$ & $| \hat{w}_{h} |_{H^1}$ & $| \hat{w}_{h} |_{H^2_h}$ & $| \hat{\bsigma}_{h} |_{L^2}$ & $h^{1/2} | \hatsignn_{h} |_{L^2(\EkDom{h}{m})}$ \\ \midrule
$368640$ & 1 & 0 & \it 1.0007 & \it 0.0000 & \it 0.9995 & \it 1.0048 \\
$92160$ & 1 & 1 & 1.1860\rlap{*} & 0.8325\rlap{*} & 0.4992\rlap{*} & 0.4680\rlap{*} \\ 
$92160$ & 1 & 2 & 0.9950\rlap{*} & 0.4968\rlap{*} & 0.4912\rlap{*} & 0.4669\rlap{*} \\ 
$92160$ & 2 & 1 & \it 2.0009 & \it 1.0010 & \it 1.9944 & \it 2.0575 \\ 
$92160$ & 2 & 2 & 2.9810 & 1.9271\rlap{*} & 1.5055\rlap{*} & 1.6780\rlap{*} \\ 
$23040$ & 2 & 3 & 2.5891\rlap{*} & 1.5470\rlap{*} & 1.5055\rlap{*} & 1.9092\rlap{*} \\ 
$92160$ & 3 & 2 & \it 2.9996 & \it 2.0000 & \it 2.9985 & \it 2.9837 \\ 
$23040$ & 3 & 3 & 3.9901 & 2.9617 & 2.5296\rlap{*} & 2.3845\rlap{*} \\ 
$23040$ & 3 & 4 & 3.5682\rlap{*} & 2.5217\rlap{*} & 2.4828\rlap{*} & 2.3819\rlap{*} \\ 
$23040$ & 4 & 3 & \it 3.9972 & \it 2.9985 & \it 3.9939 & \it 4.1086 \\ 
$23040$ & 4 & 4 & 4.9893 & 3.9457 & 3.5029\rlap{*} & 3.8619\rlap{*} \\ 
$23040$ & 5 & 4 & \it 4.9989 & \it 4.0003 & \it 4.9961 & \it 4.9544 \\ 
\bottomrule
\end{tabular}
\end{center}
}
\end{table}

\section{Final
Remarks}\label{sec:conclusions}

We have shown that the classic HHJ method can be extended to curved domains using parametric approximation of the geometry to solve the Kirchhoff plate problem on a curved domain.  Moreover, optimal convergence rates are achieved so long as the degree of geometry approximation $m$ exceeds the degree of polynomial approximation $r$ by at least $1$ (recall that the degree of the Lagrange space is $r+1$). Smaller values of $m$ generally lead to some deterioration of the convergence rates, although our estimates are not always sharp in this situation.

In particular, we have shown that solving the simply supported plate problem on a curved domain using \emph{polygonal} approximation of the domain and lowest order HHJ elements gives optimal first order convergence, the well-known Babu\v{s}ka paradox notwithstanding \cite{Babuska_SJMA1990}. Perhaps surprisingly, if the triangulation is sufficiently fine, the HHJ method computed on the fixed polygon will yield a good approximation of the exact solution of the plate problem on the smooth domain, \emph{not} a good approximation of the exact solution on the polygonal domain. One explanation of the Babu\v{s}ka paradox is that the polygonal approximating domains do not converge to the true domain in the sense of curvature, with curvature being crucial to the simply-supported boundary conditions.  However, our results show that the HHJ method does not require convergence of the curvatures.  In this sense, we might refer to the HHJ method as \emph{geometrically non-conforming}.

\bibliographystyle{siamplain}
\bibliography{Curved_HHJ}

\end{document}